\newcommand{\guio}[1]{\nobreakdash-\hspace{0pt}#1}
\long\def\@makecaption#1#2{%
  \vskip\abovecaptionskip
  \sbox\@tempboxa{#1\ #2}%
  \ifdim \wd\@tempboxa >\hsize
    #1\ #2\par
  \else
    \global \@minipagefalse
    \hb@xt@\hsize{\hfil\box\@tempboxa\hfil}%
  \fi
  \vskip\belowcaptionskip}
\newtheorem{theorem}{Theorem}
\newtheorem{lemma}{Lemma}
\newtheorem{defi}{Definition}
\theoremstyle{definition}
\newtheorem*{example}{Example}
\newtheorem*{gracies}{Acknowledgements}
\newcommand{\Rd}{\mathbb{R}^d}
\newcommand{\C}{\mathbb{C}}
\newcommand{\Capa}{\operatorname{Cap}}
\newcommand{\pv}{\operatorname{p.v.}}
\title{Differentiability properties of Riesz potentials of finite measures and non-doubling Calder\'on--Zygmund theory}
\author{Juli\`a Cuf\'{\i} and Joan Verdera}
\date{}
\begin{document}

\footnotetext{\\*[-11pt]
2010 \emph{Mathematics Subject
Classification}. 42B20, 31B15, 26B05.\\
\emph{Key words and phrases}. Differentiability, Riesz and logarithmic potentials, Newtonian and Wiener capacities, Calder\'on--Zygmund theory, Hausdorff measure.}

\maketitle

\begin{abstract}
We study differentiability properties of the Riesz potential, with kernel of homogeneity $2-d$ in $\mathbb{R}^{d}$, $d\ge 3$, of a finite Borel measure. In the plane we consider the logarithmic potential of a finite Borel measure.
We introduce a notion of differentiability in the capacity sense, where capacity is
Newtonian capacity in dimension $d \ge 3$ and Wiener capacity in the plane.
We require that the first order remainder at a point is small when measured by means of a normalized weak capacity ``norm'' in balls of small radii centered at the point. This implies $L^{p}$ differentiability in the Calder\'on--Zygmund sense for $1\le p < d/d-2$. If $d \ge 3$,\linebreak we show that the Riesz potential of a finite Borel measure is differentiable in the capacity sense except for a set of zero $C^1$-harmonic capacity. The result is sharp and depends
on deep results in non-doubling Calder\'on--Zygmund theory. In the plane the situation is different.
Surprisingly there are two distinct notions of differentiability in the capacity sense. For each of them we obtain the best possible result on the size of the exceptional set in terms of Hausdorff measures.  We obtain, for $d \ge 3$, results on Peano second order differentiability in the sense of capacity with exceptional sets of zero Lebesgue measure. Finally, as an application, we find a new proof of the well-known fact that the equilibrium measure is singular with respect to Lebesgue measure.
\end{abstract}

\section{Introduction}

Calder\'{o}n and Zygmund applied  their celebrated results on singular integrals to
understand differentiability properties of functions defined on
subsets of $\Rd.$ Besides the foundational paper \cite{CZ1}, where
logarithmic potentials in the plane and Riesz potentials in higher
dimensions were considered, one may consult \cite{CZ2} and the book
\cite{S}, in which the central results known up to the seventies
were presented. A recent interesting paper on the subject is \cite{ABC}. The
setting for our results is as follows.

Let  $\mu$  be a Borel finite measure in $\Rd$ and, in dimension $d
\ge 3,$ consider the Riesz potential
\begin{equation}\label{pot}
u(x)=(P\mu)(x)= \int_{\Rd} \frac{1}{|x-y|^{d-2}}\,d\mu(y),\quad
x\in\mathbb{R}^d.
\end{equation}
In the plane we take the logarithmic potential
\begin{equation}\label{potlog}
u(z)=(P\mu)(z)= \int_{\C} \log \frac{1}{|z-w|}\,d\mu(w),\quad
z\in\C.
\end{equation}
The kernel chosen in all dimensions $d \ge 2$ is a constant multiple of the
fundamental solution of the Laplacian in $\Rd.$ The distributional
gradient of the potential $u$ is
\begin{equation}\label{gradpot}
\nabla u = -(d-2)\,\frac{x}{|x|^d}* \mu, \quad d \ge 3,
\end{equation}
and
\begin{equation*}\label{gradpot2}
\nabla u = - \frac{z}{|z|^2}*\mu = -\,\frac{1}{\overline{z}}*\mu,
\quad d =2.
\end{equation*}
Since the kernel in the preceding identities is locally integrable,
$\nabla u$ is a locally integrable function, hence well defined a.e.
The second derivatives of $u$ in the sense of distributions are
given in dimension $d >2$ by
\begin{equation}\label{secondpot}
\partial_{jj} u =
-(d-2)\;\operatorname{p.v.} \frac{|x|^2-d\,
x_j^2}{|x|^{d+2}}
*\mu   -\frac{1}{d} (d-2) \omega_{d-1} \; \mu, \quad 1 \le j \le d,
\end{equation}
where  $\omega_{d-1}$ is the
$d-1$-dimensional surface measure of the unit sphere in $\Rd,$ and
\begin{equation}\label{secondpotcross}
\partial_{jk} u = d(d-2)\,\operatorname{p.v.} \frac{x_j x_k}{|x|^{d+2}} *\mu, \quad 1 \le j \neq k \le d.
\end{equation}
 In
dimension $d=2$, setting $z=x+i y$, one gets
\begin{equation}\label{secondpot2}
\frac{\partial^2}{\partial x^2} u = \operatorname{p.v.} \frac{x^2-
y^2}{|z|^{4}} *\mu - \pi \mu,
\end{equation}
\begin{equation}\label{secondpot22}
\frac{\partial^2}{\partial y^2} u = \operatorname{p.v.} \frac{y^2-
x^2}{|z|^{4}} *\mu - \pi \mu,
\end{equation}
and
\begin{equation}\label{secondpot23}
\frac{\partial^2}{\partial x\,\partial y} u = \operatorname{p.v.} \frac{2 x
y}{|z|^{4}} *\mu.
\end{equation}

The key fact is that one can give at almost all points a sense to
all right hand sides in \eqref{secondpot}--\eqref{secondpot23}.
Indeed, the principal value singular integrals exist a.e.\ after the
results of \cite{CZ1} and one can assign to the measure $\mu$ in
\eqref{secondpot}, \eqref{secondpot2} and \eqref{secondpot22} at the
point $x$ the density
$$\lim_{r \rightarrow 0} \frac{\mu(B(x,r))}{|B(x,r)|},$$
which exists a.e.\ in $\Rd.$ We denote by $|E|$ the $d$-dimensional
Lebesgue measure of the measurable set $E.$ Calder\'{o}n and Zygmund
proved that if $d=2$ and $\mu$ is absolutely continuous with density
locally in $\operatorname{LlogL},$ then  $u$ has a second
differential in the Peano sense a.e. One obtains the same conclusion
for $d\ge 3$ if $\mu$ is assumed to be absolutely continuous with
density in $L^q(\Rd), \; q
> d/2.$
Recall that a function $u$ defined in a neighborhood of a point $a$
has a second differential in the Peano sense if there exists
constants $A_i, \; 1 \le i \le d,$ and $B_{jk}, \; 1 \le j, k \le
d,$ such that
\begin{equation*}
u(x)=u(a)+\sum_{i=1}^d A_i (x_i-a_i)+ \sum_{j,k =1}^d B_{jk}
(x_j-a_j)(x_k-a_k)+ \varepsilon(|x-a|)\,|x-a|^2
\end{equation*}
for a certain function $\varepsilon(t)$ which tends to $0$ with $t$.

Brilliant work by many people during the last decade has shown that
most of Calder\'{o}n--Zygmund theory holds in very general contexts in
which the classical homogeneity assumption is dropped. It is enough
that  the underlying measure $m$ be a positive locally finite Borel
measure in $\Rd$ satisfying a growth condition
$$m(B(x,r)) \le C \,  r^n, \quad 0 < r < R, $$
$R$  being the diameter of the support of $m$ and $0<n\le d$. Hence
$m$ is not necessarily doubling.  See, for instance, \cite[Chapter
2]{T2} and the many references given there. It appears  then
appropriate to explore what new differentiability results might the
general non-doubling Calder\'{o}n--Zygmund theory make available. We
consider a variant of the notion of differentiability in the $L^p$
sense in which we require the remainder to tend to zero in the weak
capacitary ``norm".

\begin{defi}\label{capdif}
Let $u$ be a real function defined in a neighborhood of a point $a \in \Rd.$  Given real numbers $A_1, \dotsc, A_d$ set
\begin{equation*}
Q(x) = \frac{|u(x)-u(a) - \sum_{i=1}^d A_i (x_i-a_i)|}{|x-a|}.
\end{equation*}
We say that $u$ is differentiable in the capacity sense at the point
$a$ provided there exist real numbers $A_1, \dotsc, A_d$ such that
\begin{equation}\label{defdifcap}
\lim_{r \rightarrow 0} \frac{\sup_{t > 0} t \operatorname{Cap}(\{x \in B(a,r) : Q(x) > t \})}{\operatorname{Cap}(B(a,r))} = 0.
\end{equation}
Here $\Capa$ stands for Wiener capacity in the plane and Newtonian capacity associated with the kernel $1/|x|^{d-2}$
 in higher dimensions. See section 2 for  precise definitions.
\end{defi}

\pagebreak

In other words, we require that the normalized weak capacity norm in the ball $B(a,r)$ of the quotient $Q(x)$ tends to $0$ with $r$.
 This makes sense for potentials $u=P(\mu) $ of finite Borel measures, because they satisfy the inequality
\begin{equation}\label{wcap}
\operatorname{Cap}(\{x \in \Rd: P(\mu)(x) > t \}) \le \frac{\| \mu \|}{t}, \quad 0 < t,
\end{equation}
and so, in particular they are defined except for a set of capacity zero.

The notion of differentiability in the capacity sense can be
weakened by replacing the denominator $|x-a|$ in $Q(x)$ by $r$ and
then rescaling $t.$ We get the following.

\begin{defi}\label{wcapdif}
Let $u$ be a real function defined in a neighborhood of a point $a
\in \Rd.$ We say that $u$ is differentiable in the weak capacity
sense at the point $a$  provided there exist real numbers $A_1, \dotsc,
A_d$ such that
\begin{equation*}
\lim_{r \rightarrow 0} \frac{\sup_{t > 0} t \operatorname{Cap}(\{x
\in B(a,r) : |u(x)-u(a)-\sum_{i=1}^d A_i (x_i-a_i)| > t
\})}{r\,\operatorname{Cap}(B(a,r))} = 0.
\end{equation*}
\end{defi}

A simple argument, which consists in expressing a ball as a union of
dyadic annuli, gives readily that the above two notions of
differentiability coincide if $d \ge 3.$ Instead they are different
in the plane as we will discuss later. This is due to the fact that
$\Capa(B(a,r)) = \frac{1}{\log \frac{1}{r}},$ in the plane, while in dimensions $d \ge 3$
the dependence of the capacity of a ball on the radius is via a power: $\Capa(B(a,r)) = c_d \, r^{d-2}.$

Our first result
concerns differentiability in the capacity sense of Riesz potentials
of finite measures in $\Rd, \;d \ge 3.$

\begin{theorem}\label{teo1}
For a positive finite Borel measure $\mu$ in $\Rd, \; d \ge 3,$ the Riesz
potential
\begin{equation*}
u(x)=\int_{\Rd} \frac{1}{|x-y|^{d-2}}\,d\mu(y),\quad
x\in\mathbb{R}^d,
\end{equation*}
is differentiable in the capacity sense at the point $a \in \Rd$ if and only if
\begin{equation}\label{densitymu}
\lim_{r \rightarrow 0} \frac{\mu(B(a,r))}{r^{d-1}} =0
\end{equation}
and the principal value
\begin{equation}\label{pvmu}
\pv \int \frac{a-y}{|a-y|^d} \,d\mu(y) = \lim_{\varepsilon \rightarrow 0} \int_{|y-a|>\varepsilon}
\frac{a-y}{|a-y|^d}
\,d\mu(y)
\end{equation}
exists.
\end{theorem}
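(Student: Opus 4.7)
Setting $a = 0$ and splitting off the part of $\mu$ supported away from $0$ (which contributes a real-analytic function near $0$ and affects neither \eqref{densitymu}, \eqref{pvmu}, nor capacity differentiability), we may assume $\mu$ is compactly supported near $0$. Both directions rest on the common scale-$r$ decomposition
$$u(x) - u(0) - A\cdot x = R_{\mathrm{far}}(x) + R_{\mathrm{loc}}(x),$$
with
\begin{align*}
R_{\mathrm{far}}(x) &= \int_{|y|>2r}\bigl[|x-y|^{2-d} - |y|^{2-d} + (d-2)\,x\cdot y/|y|^d\bigr]\,d\mu(y),\\
R_{\mathrm{loc}}(x) &= P(\mu|_{B(0,2r)})(x) - P(\mu|_{B(0,2r)})(0) - J(r)\cdot x,
\end{align*}
and $J(r) := A - (d-2)\int_{|y|>2r}y/|y|^d\,d\mu(y)$. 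The Taylor bound $\bigl||x-y|^{2-d}-|y|^{2-d}+(d-2)x\cdot y/|y|^d\bigr| \le C|x|^2/|y|^d$ for $|y|\ge 2|x|$ gives the pointwise estimate $|R_{\mathrm{far}}(x)| \le C|x|^2 \int_{|y|>2r}|y|^{-d}\,d\mu(y)$ on $B(0,r)$.

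For the sufficient direction, assume \eqref{densitymu} and \eqref{pvmu} and take $A = (d-2)\,\pv\int y/|y|^d\,d\mu(y)$, so that $J(r)\to 0$ by \eqref{pvmu}. Integration by parts applied to $\int_{|y|>2r}|y|^{-d}\,d\mu(y)$ together with \eqref{densitymu} yields $|R_{\mathrm{far}}(x)| = o(|x|)$ on $B(0,r)$. Three estimates on $B(0,r)$ dispose of $R_{\mathrm{loc}}$: the inequality \eqref{wcap} gives $\sup_{t>0}\,t\,\Capa(\{|P(\mu|_{B(0,2r)})| > t\}) \le \mu(B(0,2r)) = o(r^{d-1})$; a layer-cake argument using \eqref{densitymu} yields $|P(\mu|_{B(0,2r)})(0)| = o(r)$, a constant; and $|J(r)\cdot x| \le |J(r)|\,r = o(r)$ on $B(0,r)$. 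After division by $r\,\Capa(B(0,r))\sim r^{d-1}$ each piece contributes $o(1)$, giving differentiability in the weak capacity sense, and hence, by the equivalence stated after Definition~\ref{wcapdif} for $d\ge 3$, in the capacity sense.

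For the necessary direction, suppose $u$ is capacity-differentiable at $0$ with vector $A$. To establish \eqref{densitymu} first, the superharmonic mean-value inequality for the positive potential $P(\mu|_{B(0,2r)})$ on a sphere centered at $0$ yields the universal lower bound
$$P(\mu|_{B(0,2r)})(0) \ge c\,\mu(B(0,2r))/r^{d-2}.$$
Suppose for contradiction that $\mu(B(0,2r_k))/r_k^{d-1} \ge c > 0$ along some $r_k \to 0$. Then on the annulus $B(0,4r_k)\setminus B(0,2r_k)$ one has both $P(\mu|_{B(0,2r_k)})(x) \gtrsim r_k$ and $P(\mu|_{B(0,2r_k)})(0) \gtrsim r_k$, while the linear piece is bounded in modulus by $|J(r_k)|\,r_k$. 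A spherical-average comparison, again via superharmonicity, shows that these three terms cannot cancel to $o(r_k)$ outside a set of relative capacity $o(1)$ in $B(0,4r_k)$, producing a subset of positive relative capacity on which $|R(x)|/|x| \ge c' > 0$, contradicting differentiability. Once \eqref{densitymu} is in hand, the sufficient-direction estimates force smallness of $R_{\mathrm{far}}$, of the local potential piece of $R_{\mathrm{loc}}$, and of the constant $P(\mu|_{B(0,2r)})(0)$, so the capacity-differentiability hypothesis now forces $\|J(r)\cdot x\|_{L^\infty(B(0,r))} = o(r)$, whence $J(r) \to 0$; this is precisely the existence of the principal value in \eqref{pvmu} with value $A/(d-2)$.

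The principal obstacle is extracting \eqref{densitymu} from capacity differentiability, since \eqref{wcap} yields only upper bounds on capacities of level sets. Producing a lower bound on the weak capacity norm of $R_{\mathrm{loc}}$ in terms of $\mu(B(0,2r))/r^{d-1}$ amounts to ruling out simultaneous cancellation among the three terms constant $+$ potential $+$ linear on large (in capacity) subsets of $B(0,4r)$; this requires the superharmonicity and near-radial structure of the local potential at scale $r$, and is the technical heart of the proof.
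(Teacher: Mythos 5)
Your sufficiency argument is essentially the paper's: the same choice of $A$, the same far/near splitting with a Taylor bound on the far part, integration by parts against the density condition, and the weak capacitary estimate \eqref{wcap} applied to the local potential. The only cosmetic difference is that you truncate at the fixed scale $2r$ rather than at $\varepsilon=2|x|$; this still yields the $o(r)$ bounds needed for weak capacity differentiability, and the passage to the capacity sense via dyadic annuli is legitimate for $d\ge 3$. The extraction of the principal value, once \eqref{densitymu} is known, is also morally the paper's argument, though your assertion that the hypothesis ``forces $\|J(r)\cdot x\|_{L^\infty(B(0,r))}=o(r)$'' skips the one observation that makes it work: weak capacity control of the linear function $\langle J(r),x\rangle$ on $B(0,r)$ yields $|J(r)|=o(1)$ only because such a function exceeds $c\,|J(r)|\,r$ on a spherical cap (a cone of fixed aperture intersected with $\{|x|=r/2\}$) whose capacity is comparable to $\Capa(B(0,r))$; the paper makes this explicit with the set $K_r$.

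The genuine gap is the step you yourself flag as the technical heart: deriving \eqref{densitymu} from capacity differentiability. Your proposed contradiction argument --- that the constant $P(\mu|_{B(0,2r_k)})(0)$, the potential $P(\mu|_{B(0,2r_k)})(x)$ and the linear piece ``cannot cancel to $o(r_k)$ outside a set of relative capacity $o(1)$'' --- is precisely the assertion requiring proof, and your sketch supplies no mechanism for it; on the annulus you describe, $P(\mu|_{B(0,2r_k)})(x)$ and $P(\mu|_{B(0,2r_k)})(0)$ are both of size $\simeq \mu(B(0,2r_k))\,r_k^{2-d}$ and could a priori nearly cancel pointwise. The paper avoids any cancellation analysis by an exact identity: the Poisson--Green (Riesz decomposition) formula for the superharmonic function $u$ on $B(a,r)$ gives
$$
c_d\,\frac{\mu\bigl(B(a,\tfrac{r}{2})\bigr)}{r^{d-2}}\le \frac{1}{\sigma(\partial B(a,r))}\int_{\partial B(a,r)}\bigl|u(x)-u(a)-\langle A,x-a\rangle\bigr|\,d\sigma(x),
$$
since the linear term has zero spherical mean and the Green potential of $\mu|_{B(a,r)}$ at the center dominates $r^{2-d}\mu(B(a,r/2))$. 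One must then convert this lower bound on a \emph{spherical average} into something controlled by the capacitary hypothesis; the paper does this with the comparison $c_d\,\sigma(E)\le \Capa(E)^{\frac{d-1}{d-2}}$ for $E\subset\partial B(a,r)$ (see \eqref{sigmacap}) followed by a layer-cake decomposition of the integral split at a threshold $T$ and optimized at $T=r\varepsilon(r)$. Without these two ingredients --- the exact mean-value identity and the surface-measure-versus-capacity inequality --- the necessity of \eqref{densitymu} is not established, and the rest of your necessary direction (which conditions on \eqref{densitymu}) does not get off the ground.
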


Hence differentiability in the capacity sense is exactly equivalent to vanishing of the $(d-1)$-dimensional
density of the measure and existence of the principal value \eqref{pvmu}. Existence of the principal
values brings into the picture singular integrals with respect to a non-doubling underlying measure. This
will happen when one is dealing with measures~$\mu$ which are as spread as the vanishing of the
$(d-1)$-dimensional density allows. In this case the kernel ${a-y}/{|a-y|^d}$ will behave as a singular
Calder\'on--Zygmund kernel with respect to $\mu$ and there is no reason to expect $\mu$ to be doubling.

Our second result asserts that the Riesz potential of each finite Borel measure $\mu$ in~$\Rd, \; d\ge3$
is differentiable in the capacity sense except in a set whose size is controlled by an appropriate set
function. This set function is called $C^1$ harmonic capacity
and is  defined as follows.

The $C^1$ harmonic capacity of a compact set $E \subset \Rd$
is
\begin{equation}\label{kappa}
\kappa^c(E)= \sup |\langle T,1 \rangle|
\end{equation}
where the supremum is taken over those distributions $T$ supported
on $E$ such that\linebreak $T*x/|x|^{d}$ is a continuous vector valued function on $\Rd$
satisfying $\|(T*x/|x|^{d})(x)\|\! \le\! 1$, $x \in \Rd.$ The terminology refers to the fact that
convolving such a distribution with the fundamental solution of the Laplacian one gets a harmonic function
on $\Rd \setminus E$ of class~$C^1(\Rd)$.

It is readily seen that $\kappa^c(E)=0$ if and only each function of
class $C^1(\Rd)$ harmonic on $\Rd \setminus E$ is linear. The
homogeneity of the set function $\kappa^c$ is $d-1$, that is,
$\kappa^c(\lambda E)=\lambda^{d-1} \kappa^c( E).$ Deep results of
\cite{RT} show that $C^1$ harmonic capacity can be described in
terms of positive measures supported on the set, having null
$(d-1)$-dimensional density and enjoying the property that the
singular integral operator determined by the vectorial kernel
$x/|x|^d$ is bounded on the $L^2$ Lebesgue space of the measure. The
description is rather explicit and in particular shows that
$\kappa^c$ is semiadditive, i.\ e.,
$$\kappa^c(E \cup F) \le C  \, \left(\kappa^c(E)+\kappa^c(F)\right),$$
for a dimensional constant~$C$ independent of the compact sets $E$ and $F.$
If $F$ is an arbitrary subset of $\Rd$, then $\kappa^c(F)$ is defined as the supremum of  $\kappa^c(E)$ over
all compact subsets $E$ of $F$.
See section 2 for more details.

\begin{theorem}\label{teo2}
For each finite Borel measure $\mu$ in $\Rd, \; d \ge 3,$ the Riesz
potential
\begin{equation*}
u(x)=\int_{\Rd} \frac{1}{|x-y|^{d-2}}\,d\mu(y),\quad
x\in\mathbb{R}^d,
\end{equation*}
is differentiable in the capacity sense at $\kappa^c$ almost all
points.
\end{theorem}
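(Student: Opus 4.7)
The plan is to combine Theorem~1 with the Ruiz de Villa--Tolsa description of positive $\kappa^{c}$ sets recalled in the introduction, and with Besicovitch's differentiation theorem for Radon measures. By Theorem~1, the set where $u=P\mu$ is not differentiable in the capacity sense at $a$ equals $A\cup B$, where
$$A=\Bigl\{a:\limsup_{r\to 0}\frac{\mu(B(a,r))}{r^{d-1}}>0\Bigr\}, \qquad B=\Bigl\{a:\pv\int\frac{a-y}{|a-y|^{d}}\,d\mu(y)\text{ does not exist}\Bigr\}.$$
By the semiadditivity of $\kappa^{c}$ it is enough to show $\kappa^{c}(A)=\kappa^{c}(B)=0$, and I would argue each by contradiction. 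If $\kappa^{c}(A)>0$ or $\kappa^{c}(B)>0$, then by the description of $\kappa^{c}$ there is a non-trivial positive Borel measure $\sigma$ compactly supported in $A$ (resp.\ in $B$) with $\lim_{r\to 0}\sigma(B(a,r))/r^{d-1}=0$ at $\sigma$-a.e.\ $a$ and with the vectorial Riesz transform bounded on $L^{2}(\sigma)$; such a $\sigma$ is automatically a finite measure, thanks to its linear growth and compact support.

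For the set $A$, I would invoke Besicovitch's differentiation theorem for Radon measures in $\Rd$, which is available without any doubling hypothesis: since $\mu$ is finite, the limit $f(a)=\lim_{r\to 0}\mu(B(a,r))/\sigma(B(a,r))$ exists and is finite for $\sigma$-a.e.\ $a$. Combined with the vanishing of the $(d-1)$-density of $\sigma$,
$$\frac{\mu(B(a,r))}{r^{d-1}}=\frac{\mu(B(a,r))}{\sigma(B(a,r))}\cdot\frac{\sigma(B(a,r))}{r^{d-1}}\longrightarrow f(a)\cdot 0=0$$
at $\sigma$-a.e.\ $a$, which contradicts $\operatorname{supp}(\sigma)\subset A$. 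Hence $\kappa^{c}(A)=0$.

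For the set $B$ the contradiction rests on the following deep fact of non-doubling Calder\'on--Zygmund theory, which is the main obstacle of the proof: whenever a positive Borel measure $\sigma$ has linear growth and the vectorial Riesz transform is bounded on $L^{2}(\sigma)$, then for \emph{every} finite Borel measure $\mu$ the principal value $\pv\int\frac{a-y}{|a-y|^{d}}\,d\mu(y)$ exists at $\sigma$-a.e.\ $a$. This is obtained from the weak-type $(1,1)$ inequality for the maximal truncated Riesz transform with respect to $\sigma$, in the spirit of the non-doubling theorems of Nazarov--Treil--Volberg and Tolsa, together with a standard approximation argument using the Lebesgue decomposition of $\mu$ with respect to $\sigma$ and the fact that the principal value exists pointwise for smooth compactly supported measures. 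Once this is granted, the principal value exists at $\sigma$-a.e.\ $a$, contradicting $\operatorname{supp}(\sigma)\subset B$; hence $\kappa^{c}(B)=0$ and Theorem~2 follows.
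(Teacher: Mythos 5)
Your proposal is correct and follows essentially the same route as the paper: positive $\kappa^c$ of the exceptional set produces, via the Ruiz de Villa--Tolsa comparison $\kappa^c\simeq\kappa^o$, a non-trivial positive measure $\sigma$ with $(d-1)$-growth, vanishing $(d-1)$-density and $L^2(\sigma)$-bounded Riesz transform; Besicovitch (Radon--Nikodym) differentiation transfers the vanishing density to $\mu$; non-doubling Calder\'on--Zygmund theory gives the principal values $\sigma$-a.e.; and the sufficiency half of Theorem \ref{teo1} concludes. Two minor remarks: the paper works with a single measure supported on the whole exceptional set and verifies both conditions for it simultaneously, so semiadditivity of $\kappa^c$ is not actually needed; and in the principal-value step the correct ``seed'' for the weak-$(1,1)$/approximation argument is the Mattila--Verdera theorem ($L^2$ boundedness plus vanishing density imply that $\pv R\sigma$ exists $\sigma$-a.e.), because smooth compactly supported measures are not dense in total variation and cannot by themselves handle the part of $\mu$ absolutely continuous with respect to~$\sigma$.
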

The result is sharp. In fact the unit sphere in $\Rd$ can be shown
to be the set of points at which the potential of a positive finite Borel measure
is not differentiable in the capacity sense. The maximal dimension
of a set of vanishing $\kappa^c$ capacity is $d-1$ and the sphere
is, in some sense, the biggest such set that one can imagine.

Somehow surprisingly, in dimension $d=2$ the result one finds is different.
This is due to the usual difficulties associated with the
logarithmic kernel. Indeed, there are two separate results, each
dealing with one of the two notions of differentiability in the
capacity sense we have in the plane.

Let $\varphi$ be the measure function
\begin{equation}\label{fi}
\varphi(t)=\begin{cases}  t \, \frac{1}{\log \frac{1}{t}},  &0
<t \le e^{-1}, \\*[5pt]
t, &e^{-1} \le t,
\end{cases}
\end{equation}
and $H^\varphi$ the associated Hausdorff measure. We then have

\begin{theorem}\label{teo3}
For each finite Borel measure $\mu$ in $\C$ the logarithmic
potential
\begin{equation*}
u(z)=\int_{\C} \log \frac{1}{|z-w|}\,d\mu(w),\quad z \in \C,
\end{equation*}
is differentiable in the weak capacity sense at $H^\varphi$ almost
all points.
\end{theorem}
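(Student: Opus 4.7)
The plan is to exhibit explicit linear coefficients at each ``good'' point $a$, where ``good'' excludes an $H^\varphi$-null set, and to verify the weak-capacity limit there by a scale-by-scale decomposition. Formal differentiation of the logarithmic kernel suggests taking $A_1+iA_2=\alpha(a)$ with
$$\alpha(a):=\pv\int_{\C}\frac{d\mu(w)}{a-w}.$$
Two conditions carve out the good set: finiteness of the upper $\varphi$-density $\Theta^{*\varphi}(\mu,a):=\limsup_{r\to 0}\mu(B(a,r))/\varphi(r)$, and existence of $\alpha(a)$. The first holds off an $H^\varphi$-null set by a standard $5r$-covering argument applied to the finite measure $\mu$ (points where the upper density exceeds $\lambda$ are covered by balls giving $H^\varphi$-mass $\le\mu(\C)/\lambda$, and $\lambda\to\infty$). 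The second is where the non-doubling Calder\'on--Zygmund machinery enters: $1/(a-w)$ is a standard CZ kernel, and on measures of finite $\varphi$-growth its truncations are $L^2(\mu)$-bounded by Nazarov--Treil--Volberg / Tolsa-type $T(1)$ theorems, which in turn force $\mu$-a.e.\ (hence $H^\varphi$-a.e.) convergence of the truncated integrals; points outside the support of $\mu$ are trivial.

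\emph{Scale decomposition.} Fix a good point $a$, $r>0$, and split $\mu=\mu^{\mathrm{in}}_r+\mu^{\mathrm{out}}_r$ with $\mu^{\mathrm{in}}_r=\mu|_{B(a,2r)}$. Decompose the remainder $R(z)=u(z)-u(a)-\operatorname{Re}[(z-a)\alpha(a)]$ accordingly. For $R^{\mathrm{out}}_r$, Taylor expand $\log(|a-w|/|z-w|)$ in $z$ around $a$: combining the linear term with $-\operatorname{Re}[(z-a)\alpha(a)]$ gives $\operatorname{Re}[(z-a)(\int_{|w-a|>2r}d\mu/(w-a)-\alpha(a))]$, which is $r\cdot o(1)=o(\varphi(r))$ by definition of the principal value, while the quadratic Taylor tail, summed dyadically against the density bound, yields a pointwise $O(|z-a|/\log(1/r))$ on $B(a,r)$, again $o(\varphi(r))$ in weak capacity. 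For $R^{\mathrm{in}}_r$, write it as the principal-value integral of the same integrand, and split further according to whether $|w-a|>2|z-a|$ or $|w-a|\le 2|z-a|$: the first sub-piece is dispatched as above, while the second is estimated in weak capacity by applying \eqref{wcap} to the restrictions of $\mu$ to successively smaller balls $B(a,2^{-k}r)$ and summing with Wiener capacity $\Capa(B(a,2^{-k}r))=1/(\log(1/r)+k\log 2)$.

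\emph{Main obstacle.} The heart of the argument is the closest-scale piece of $R^{\mathrm{in}}_r$: the naive application of \eqref{wcap} to $\mu^{\mathrm{in}}_r$ only yields $\|\mu^{\mathrm{in}}_r\|\lesssim\varphi(r)$, hence ratio $O(1)$ rather than $o(1)$. The required $o(1)$ improvement must come from the cancellation between the local potential and the subtracted linear part, quantified by the convergence rate of the truncated principal values $\int_{|w-a|>2^{-k}r}d\mu/(w-a)\to\alpha(a)$. Turning this PV convergence into a uniform-in-scale $o(1)$ weak-capacity bound is the technical core: the logarithmic form of Wiener capacity should cooperate, since each dyadic annulus carries mass at most $C\varphi(2^{-k}r)$ but contributes only $1/(\log(1/r)+k\log 2)$ in capacity, so that once the PV-cancellation is inserted the dyadic series damps geometrically. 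This step is the plane analogue of the delicate local estimate underlying Theorem~\ref{teo2}; the appearance of $H^\varphi$ (rather than a set-function capacity) on the exceptional side is precisely the trace of this logarithmic bookkeeping.
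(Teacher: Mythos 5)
There is a genuine gap, and it sits exactly at the two places where your outline leans on hope rather than proof. First, the existence of the principal value $\alpha(a)$ at $H^\varphi$\guio{almost} all points. Your $\mu$ is an arbitrary finite Borel measure; it need not have $\varphi$-growth, so the Mattila/$T(1)$-type $L^2(\mu)$-boundedness you invoke does not apply to $\mu$ itself. Even where it would, the conclusion is only $\mu$-a.e.\ convergence of the truncations, and the parenthetical ``hence $H^\varphi$-a.e.'' is false: a $\mu$-null set can carry positive $H^\varphi$-measure (take $\mu$ a point mass). The paper closes this gap by a duality (Frostman) argument: if the bad set $E$ had $H^\varphi(E)>0$, there would be a nonzero measure $m$ on $E$ with $m(B(z,r))\le\varphi(r)$; Mattila's curvature theorem gives $L^2(m)$-boundedness of the Cauchy-type transform, $m$ has vanishing linear density, and the Mattila--Verdera theorem (plus a classical comparison argument) then yields existence of $\pv\int (a-w)|a-w|^{-2}\,d\nu(w)$ at $m$-a.e.\ $a$ for \emph{every} finite Borel measure $\nu$, in particular for $\mu$. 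The exceptional set is thus tested against all Frostman measures $m$, not against $\mu$; this is the missing pivot of your argument.

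Second, the ``main obstacle'' you identify is real, but your proposed resolution does not work: the closest-scale piece $\int_{|w-a|<2|z-a|}\log\frac{1}{|w-z|}\,d\mu(w)$ is (at small scales) a positive integral, so no cancellation from the convergence of the truncated principal values can be extracted from it; with only $\limsup_r\mu(B(a,r))/\varphi(r)<\infty$ you are stuck at $O(1)$ in the weak-capacity quotient. The paper gets the $o(1)$ from the same Frostman measure $m$: writing $\mu=fm+\mu_s$ and $\nu=(f-f(a))m+\mu_s$, one has $|\nu|(B(a,r))\le\eta(r)\varphi(r)$ with $\eta(r)\to0$ at $m$-a.e.\ $a$ (Lebesgue points of $f$ relative to $m$), so the capacitary estimate via \eqref{wcap} applied to $\nu$ is genuinely $o(1)$ (Lemma~\ref{lemma2}), while the leftover piece $f(a)m$ has uniformly vanishing linear density and its potential is differentiable in the \emph{ordinary} sense (Lemma~\ref{lemma1}). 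Your dyadic bookkeeping for the outer and intermediate scales is essentially the same as the paper's integration-by-parts estimates and is fine; it is the two structural steps above --- both resting on the auxiliary measure $m$ --- that your proposal lacks.
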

The result is sharp in the scale of Hausdorff measures. Given a
measure function~$\Phi$ with the property that $\Phi(r)/ \varphi(r)
\rightarrow \infty $ as $r \rightarrow 0$ and satisfying another
minor assumption,  then there exists a finite Borel measure whose
logarithmic potential is not differentiable in the weak capacity
sense on a set of positive $H^\Phi$-measure. See Theorem \ref{teo6}
in section 5 for a precise statement.

The next result deals with differentiability in the capacity sense
in the plane. Let $\psi$ stand for the measure function
\begin{equation}\label{psi}
\psi(t)=\begin{cases}
t \, \frac{1}{\log^2( \frac{1}{t})}, &0 <t \le e^{-1}, \\*[5pt]
t, &e^{-1} \le t.
\end{cases}
\end{equation}

\begin{theorem}\label{teo4}
For each finite Borel measure $\mu$ in $\C$ the logarithmic
potential
\begin{equation*}
u(z)=\int_{\C} \log \frac{1}{|z-w|}\,d\mu(w),\quad z \in \C,
\end{equation*}
is differentiable in the capacity sense at $H^\psi$ almost all
points.
\end{theorem}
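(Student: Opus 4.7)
The plan is to characterize capacity differentiability by pointwise conditions on $\mu$ and then show these hold $H^{\psi}$-almost everywhere. The natural candidate derivative at $a$, coming from $\nabla u=-(1/\overline{z})*\mu$, is
\[
A=-\pv\int \frac{a-w}{|a-w|^{2}}\,d\mu(w).
\]
I claim $u$ is differentiable in the capacity sense at $a$ if and only if (i)~$\mu(B(a,r))/\psi(r)\to 0$ as $r\to 0$ and (ii)~the above principal value exists. Theorem~\ref{teo4} then reduces to showing (i) and (ii) hold $H^{\psi}$-a.e.

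For the equivalence, write
\[
u(x)-u(a)-A\cdot(x-a)=\int\!\Big[\log\frac{|a-w|}{|x-w|}+\frac{(a-w)\cdot(x-a)}{|a-w|^{2}}\Big]d\mu(w),
\]
and decompose $B(a,r)$ into dyadic annuli $S_{k}=B(a,2^{-k})\setminus B(a,2^{-k-1})$, $2^{-k}\le r$. On each $S_{k}$, split $\mu$ at scale $2^{1-k}$. The far contribution to the remainder is bounded pointwise by $C|x-a|^{2}\int_{|a-w|>2^{1-k}}|a-w|^{-2}\,d\mu$, which by (i) and a layer-cake computation is $o(2^{-k}/k^{2})$, so $Q(x)=o(1/k^{2})$ uniformly on $S_{k}$. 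The near contribution is a logarithmic potential of a measure of mass $o(\psi(2^{1-k}))=o(2^{-k}/k^{2})$ plus a linear term with coefficient of size $o(1/k)$; the weak-capacity inequality \eqref{wcap} then yields $\Capa(\{Q>t\}\cap S_{k})=o(1/(k^{2}t))$. Semiadditivity of $\Capa$ together with $\sum_{k\ge\log_{2}(1/r)}k^{-2}\asymp 1/\log(1/r)\asymp\Capa(B(a,r))$ combine to give the limit in \eqref{defdifcap}.

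Condition (i) holds $H^{\psi}$-a.e.\ by a standard covering-lemma bound of the form $H^{\psi}(\{a:\limsup_{r\to 0}\mu(B(a,r))/\psi(r)>\lambda\})\le C\|\mu\|/\lambda$, so letting $\lambda\downarrow 0$ through a countable sequence gives $\limsup=0$ outside an $H^{\psi}$-null set. Condition (ii) is the main obstacle and requires non-doubling Calder\'on--Zygmund theory for the Cauchy kernel $1/\overline{z}$: on the set where (i) holds, this kernel behaves as a Calder\'on--Zygmund operator with respect to $\mu$ in the sense of Nazarov--Treil--Volberg and Tolsa, and its maximal truncation satisfies a weak-capacity inequality that upgrades existence of the principal value from a $\mu$-a.e.\ statement to an $H^{\psi}$-a.e.\ one. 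The extra factor $1/\log(1/t)$ in $\psi$ compared with $\varphi$ in Theorem~\ref{teo3} reflects exactly the extra $|x-a|^{-1}$ present in $Q$, coupling to the planar capacity $\Capa(B(a,r))\asymp 1/\log(1/r)$.
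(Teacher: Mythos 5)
The sufficiency half of your claimed equivalence --- that $\mu(B(a,r))/\psi(r)\to 0$ forces differentiability in the capacity sense at $a$ --- is essentially the paper's core estimate, and your sketch of it (far/near split, weak capacity inequality \eqref{wcap} for the near part) is sound. Note, though, that under this hypothesis the principal value is not a separate condition: $\psi(r)=r/\log^2(1/r)$ makes $\int_{|w-a|<1}|w-a|^{-1}\,d\mu(w)<\infty$ by a layer-cake computation, so the integral converges absolutely and no non-doubling Calder\'on--Zygmund machinery is needed for Theorem \ref{teo4} (it is needed for Theorem \ref{teo3}, where $\varphi$ replaces $\psi$ and that integral diverges).

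The genuine gap is in the reduction. Condition (i) does \emph{not} hold $H^{\psi}$-almost everywhere for a general finite $\mu$, and your covering argument does not show that it does: the bound $H^{\psi}\bigl(\{a:\limsup_{r}\mu(B(a,r))/\psi(r)>\lambda\}\bigr)\le C\|\mu\|/\lambda$ blows up as $\lambda\downarrow 0$, so taking a countable sequence $\lambda_n\downarrow 0$ gives a union of sets of finite but unbounded $H^{\psi}$-measure, not a null set. (All one gets this way is that the set of \emph{infinite} upper $\psi$-density is $H^{\psi}$-null.) Concretely, take $\mu=H^{\psi}|_{K}$ for a Cantor set $K$ with $0<H^{\psi}(K)<\infty$: at $H^{\psi}$-a.e.\ point of $K$ the upper $\psi$-density is bounded below by a positive constant, so (i) fails on a set of full $H^{\psi}|_{K}$-measure, yet the theorem must still apply to this $\mu$. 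This also shows your ``only if'' direction is false: a measure $m$ with the \emph{uniform} growth $m(B(z,r))\le C\psi(r)$ for all $z$ has gradient potential $-\frac{1}{\bar z}*m$ continuous, hence a $C^{1}$ (a fortiori capacity-differentiable) potential, regardless of whether its $\psi$-density vanishes. The paper's proof exploits exactly this: assuming the exceptional set $E$ has $H^{\psi}(E)>0$, Frostman gives a nonzero $m$ supported on $E$ with $m(B(z,r))\le\psi(r)$; writing $\mu=(f-f(a))m+\mu_{s}+f(a)m$, the piece $f(a)m$ has a $C^{1}$ potential, while $\nu=(f-f(a))m+\mu_{s}$ satisfies $|\nu|(B(a,r))=o(\psi(r))$ at $m$-a.e.\ $a$ and is handled by your sufficiency estimate; this forces $m\equiv 0$, a contradiction. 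Without this decomposition relative to a Frostman measure on the exceptional set, your argument cannot close.
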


As before, the result is sharp in the scale of Hausdorff measures.
Given a measure function $\Psi$ with the property that $\Psi(r)/
\psi(r) \rightarrow \infty $ as $r \rightarrow 0$ and satisfying
another minor assumption,  then there exists a finite Borel measure
whose logarithmic potential is not differentiable in the capacity
sense on a set of positive $H^\Psi$-measure. See Theorem \ref{teo7}
in section 6 for a precise statement.

We turn now to Peano second order differentiability. Given a
function $u$, a point $a \in \Rd$ and real numbers $A_i, \; 1 \le i
\le d$, and $B_{jk}, \; 1 \le j, k \le d,$ set
\begin{equation}\label{Q2}
D(x)=\frac{1}{|x-a|^2} \,\left| u(x)-u(a)-\sum_{i=1}^d A_i
(x_i-a_i)- \sum_{j,k =1}^d B_{jk} (x_j-a_j)(x_k-a_k)\right|.
\end{equation}

\begin{defi}\label{capdif2}
Let $u$ be a real function defined in a neighbourhood of a point $a
\in \Rd.$ We say that $u$ is differentiable of the second order in
the capacity sense at the point $a$ provided there exist real
numbers $A_i \; 1 \le i \le d$  and $B_{jk}, \; 1 \le j, k \le d,$ such
that
\begin{equation}\label{defdifcap2}
\lim_{r \rightarrow 0} \frac{\sup_{t > 0} t \operatorname{Cap}(\{x
\in B(a,r) : D(x) > t \})}{\operatorname{Cap}(B(a,r))} = 0.
\end{equation}
\end{defi}

We also mention for the record that there is the corresponding notion of second order differentiability in the
weak capacity sense,
which consists in requiring that
\begin{equation*}
\lim_{r \rightarrow 0} \frac{\sup_{t > 0} t \operatorname{Cap}(\{x
\in B(a,r) : \tilde{D}(x) > t \})}{r^2 \,\operatorname{Cap}(B(a,r))} = 0,
\end{equation*}
where $\tilde{D}(x) = \left| u(x)-u(a)-\sum_{i=1}^d A_i
(x_i-a_i)- \sum_{j,k =1}^d B_{jk} (x_j-a_j)(x_k-a_k)\right|.$

\begin{theorem}\label{teo5}

\begin{enumerate}
\item[(i)]  For each finite Borel measure $\mu$ in $\Rd, \; d \ge 3,$ the
Riesz potential
\begin{equation*}
u(x)=\int_{\Rd} \frac{1}{|x-y|^{d-2}}\,d\mu(y),\quad
x\in\mathbb{R}^d,
\end{equation*}
is differentiable of the second order in the capacity sense at
almost all points (with respect to Lebesgue measure in $\Rd$).
\item[(ii)] There exists a finite Borel measure in $\C$ such that
the logarithmic potential of $\mu$ is not differentiable of the
second order in the weak capacity sense at almost all points of $\Rd.$
\end{enumerate}
\end{theorem}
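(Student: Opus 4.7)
The plan is to verify Definition~\ref{capdif2} at every $a$ satisfying: (A) the principal values in \eqref{secondpot}--\eqref{secondpotcross} exist, (B) $|\mu-f(a)\,dx|(B(a,s))=o(s^d)$ with $f=d\mu_{ac}/dx$, and (C) $a$ is a Lebesgue point of the $L^1_{\mathrm{loc}}$ function $\nabla u$; these conditions hold Lebesgue-a.e. At such $a$ I set $A_i=\partial_i u(a)$ and $B_{jk}=\tfrac12\partial_{jk}u(a)$ via the formulas above and call $T_a$ the corresponding Peano polynomial. Writing $\mu=f(a)\chi_{B(a,R_0)}\,dx+\nu$ with $R_0$ enclosing $\operatorname{supp}\mu$, the potential $P(f(a)\chi_{B(a,R_0)})$ is radial and $C^\infty$ near $a$ and agrees with its Taylor polynomial modulo $O(|x-a|^3)$, so it suffices to treat $\nu$, which has vanishing $d$-dimensional density $\eta(s):=|\nu|(B(a,s))/s^d\to 0$.

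\textbf{Scale-by-scale estimate.} For small $r>0$ I will split $B(a,r)\setminus\{a\}$ into dyadic shells $A_k=\{2^{-k-1}r<|x-a|\le 2^{-k}r\}$ of natural scale $\rho_k=2^{-k}r$, and at each scale write $\nu=\nu^{\mathrm{in}}_k+\nu^{\mathrm{out}}_k$ with $\nu^{\mathrm{in}}_k=\nu|_{B(a,2\rho_k)}$, and correspondingly $T_a=T_a^{\mathrm{in}}+T_a^{\mathrm{out}}$ where $T_a^{\mathrm{out}}$ is the true Taylor polynomial at $a$ of the smooth function $P\nu^{\mathrm{out}}_k$. Three estimates drive the argument. (A) From Taylor and the bound $|D^3P\nu^{\mathrm{out}}_k(x)|\le C\epsilon_1(\rho_k)/\rho_k$, obtained by dyadic integration of $|y-a|^{-(d+1)}$ against $d|\nu|$ using $\eta\to 0$, one gets $|P\nu^{\mathrm{out}}_k-T_a^{\mathrm{out}}|/|x-a|^2\le C\epsilon_1(\rho_k)$ on $A_k$. (B) Analogous dyadic integrations show the coefficients of $T_a^{\mathrm{in}}$ are of sizes $o(\rho_k^2)$, $o(\rho_k)$, $o(1)$, the last one controlled by the \emph{rate of convergence} of the pv at $a$, giving $|T_a^{\mathrm{in}}|/|x-a|^2\le C\epsilon_2(\rho_k)$ on $A_k$. (C) Inequality \eqref{wcap} gives $\Capa\{|P\nu^{\mathrm{in}}_k|>s\}\le\|\nu^{\mathrm{in}}_k\|/s\le C\eta(2\rho_k)\rho_k^d/s$.

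\textbf{Summing.} Combining (A)--(C), and using the trivial bound $t\,\Capa(A_k)\le Ct\rho_k^{d-2}$ when $t$ is small compared to the $\epsilon_i$, the capacity bound of (C) otherwise, yields for every $t>0$ and every $k$
\[
t\,\Capa(A_k\cap\{D>t\})\le C\bar\eta(\rho_k)\rho_k^{d-2}, \qquad \bar\eta:=\max(\eta,\epsilon_1,\epsilon_2)\to 0.
\]
Since $d\ge 3$, the series $\sum_k 2^{-k(d-2)}$ converges, so subadditivity of Newtonian capacity yields
\[
\sup_{t>0}\,t\,\Capa\{x\in B(a,r):D(x)>t\}\le Cr^{d-2}\sup_{0<s\le 2r}\bar\eta(s)=o(r^{d-2})=o(\Capa(B(a,r))),
\]
which is \eqref{defdifcap2}. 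The delicate step is (B): the $o(1)$ in the quadratic coefficient is exactly what the existence of the pv at $a$ buys, while its constant and linear parts use only the vanishing of the $d$-density.

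\textbf{Part (ii).} The above scheme breaks in the plane precisely because $d-2=0$ makes the scale sum diverge: a per-scale contribution of fixed size is fatal. The plan is to exploit this collapse by constructing $\mu$ of self-similar or periodic product-Cantor type so that, at Lebesgue-a.e.\ $a$, there is a sequence $r_n\to 0$ along which a definite fraction of $\mu$ concentrates in the shell $B(a,2r_n)\setminus B(a,r_n)$ and produces, on a subset of $B(a,r_n)$ of Wiener capacity bounded below by a positive absolute constant, a value of $\tilde D$ of order $r_n^2$. Then
\[
\frac{\sup_{t>0}\,t\,\Capa\{x\in B(a,r_n):\tilde D(x)>t\}}{r_n^2\,\Capa(B(a,r_n))}\gtrsim\log\tfrac{1}{r_n}\to\infty,
\]
contradicting second-order differentiability in the weak capacity sense. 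The main obstacle is making this fail simultaneously at almost every $a$; as for the sharpness parts of Theorems~\ref{teo3}--\ref{teo4}, self-similarity and translation invariance of the construction are the expected route to a.e.\ failure.
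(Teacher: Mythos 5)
Your part (i) is correct and is essentially the paper's argument: reduce to a measure of vanishing $d$\guio{dimensional} density at $a$, control the far part of the kernel by a third-order Taylor bound plus integration by parts, and handle the near-singular term $\frac{1}{\varepsilon^{2}}\int_{|y-a|<\varepsilon}|x-y|^{-(d-2)}\,d\mu(y)$ by the weak capacitary inequality \eqref{wcap}. The only organizational difference is that you sum weak-type bounds over dyadic shells using countable subadditivity of Newtonian capacity and $\sum_k 2^{-k(d-2)}<\infty$, whereas the paper dominates that term once and for all by the potential of the single measure $\chi_{B(0,2r)}(y)\,|y|^{-2}d\mu(y)$, whose mass is $O\bigl(r^{d-2}\sup_{\rho}\mu(B(0,\rho))/\rho^{d}\bigr)$; both routes are valid and your per-shell case analysis in $t$ is sound.

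Part (ii) has a genuine gap: the mechanism you propose is impossible. You ask for a subset of $B(a,r_{n})$ on which $\tilde D\gtrsim r_{n}^{2}$ and whose Wiener capacity is bounded below by a positive absolute constant. But Wiener capacity is monotone and $\operatorname{Cap}(B(a,r_{n}))=1/\log(1/r_{n})\to 0$, so every subset of $B(a,r_{n})$ has capacity at most $1/\log(1/r_{n})$; no absolute lower bound is available, and this is precisely where your claimed factor $\log(1/r_{n})$ comes from. If you weaken the requirement to capacity comparable to $\operatorname{Cap}(B(a,r_{n}))$, the quotient you obtain is only bounded below by a constant, not divergent, and in any case you have not explained how to defeat \emph{every} choice of the coefficients $A_{i},B_{jk}$ with values of $\tilde D$ of the fixed size $r_{n}^{2}$. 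The point you are missing is that the supremum over $t$ must be exploited in the regime $t\to\infty$: the paper (following Calder\'on) builds $\mu$ from Dirac masses at points $p_{nk}$, compensated by arc-length on small circles so that each building block has the localized potential $L$, and then $R=\sum_{n\ge m}n^{-3/2}S_{n}$ has genuine logarithmic singularities. For huge $t$ the set $\{R>t\}$ contains a disc of radius $e^{-ct}$ around some $p_{qk}\in B(a,r)$, whose capacity is $\simeq 1/(ct+\log(1/\rho))$, so $t\operatorname{Cap}\{R>t\}\to 1/c$ as $t\to\infty$; any fixed candidate polynomial is irrelevant at large $t$, and the divergence of the normalized quotient ($\simeq q^{1/2}$ for $r=2^{-q^{2}}$) comes from tuning the weights $n^{-3/2}$ and the number of points $N_{n}$ against $r^{2}\operatorname{Cap}(B(a,r))$. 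Your shell-concentration heuristic, which keeps $t$ of size $r_{n}^{2}$ and the exceptional set of macroscopic capacity, cannot produce this divergence.
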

The preceding result could have been proved in the sixties and its proof
follows standard arguments from \cite{S} for part (i) and an idea of Calder\'{o}n from
\cite{C} for part (ii). In section 8 we apply Theorems \ref{teo4} and \ref{teo5} to provide a new proof of the well-konwn fact that the equilibrium measure is singular with respect to Lebesgue measure.

In \cite{ABC} one proves that the Riesz potential $1/|x|^{d-1} *
\mu$ of a finite Borel measure~$\mu$  is differentiable in the $L^p$
sense, $1 \le p < d/(d-1)$, at almost all points. One can also adopt
in this context our notion of differentiability in the capacity
sense, where this time the capacity involved is the one related to
the kernel $1/|x|^{d-1} $, say $C_{d-1}$. Since one has $C_{d-1}(E)
\ge c\,|E|^{(d-1)/d}$, it turns out that differentiability in the
capacity sense implies differentiability in the $L^p$ sense for the
range $1 \le p < d/(d-1)$. The argument for the proof of Theorem
\ref{teo5} can be adapted easily to obtain differentiability in the
$C_{d-1}$\guio{capacity} sense almost everywhere. Thus one has a
slightly better result.

The paper is organized as follows. In section 2 we collect a series
of background facts on capacities, singular integral operators on
subsets of $\Rd$, and Cantor sets. In sections 3 and~4 we prove Theorems \ref{teo1} and \ref{teo2}, respectively.
In section 5 we prove  Theorem~\ref{teo3}. Sharpness of Theorem \ref{teo3} is established in Theorem \ref{teo6} by means of a construction,
inspired by work of Calder\'{o}n in \cite{C}.   Section 6 is
devoted to Theorem \ref{teo4} and its sharpness, established in Theorem \ref{teo7}. The proof of
Theorem \ref{teo5} is in section 7 and the application to equilibrium measure is in section 8.

Our terminology and notation are standard. For instance, we use the
letter $C$ to denote a positive constant, which may vary at each
occurrence, and which is independent of the relevant parameters.
Usually $C$ depends only on dimension. We use the symbol~$ A \simeq
B$ to indicate that for some constant $C>1$ one has $C^{-1}\,B \le A
\le C \, B$.

\section{Background facts}

\subsection{Wiener and Newtonian capacities.}

If $E$ is a compact subset of $R^d,\; d \ge 3$, the Newtonian capacity of $E$ is
\begin{equation}\label{newtoniancap}
 \Capa(E) = \sup \mu(E)
\end{equation}
where the supremum is taken over all positive finite Borel measures supported on $E$
such that the Riesz potential $P(\mu)$ of $\mu$ satisfies $P(\mu)(x)\le 1, \,x \in \Rd$.
There is an equivalent definition involving the notion of energy. The energy of a measure $\nu$ is
\begin{equation}\label{energy}
 V(\nu)= \iint \frac{1}{|x-y|^{d-2}}\, d\nu(x) \,d\nu(y)
\end{equation}
and one has
\begin{equation}\label{capenergy}
 \Capa(E) = (\inf \{V(\nu) :  \text{support of}\; \nu \subset E \;\text{and} \; \|\nu\|=1\})^{-1}.
\end{equation}
It can easily be seen that $\Capa(B(a,r)) = c_d\,r^{d-2}$ (see \cite{AG}).

In the plane one would like to make the same definitions with the Riesz kernel replaced by the logarithm.
The difficulty is that the kernel changes sign and this causes inconveniences. One way to proceed is
to consider only subsets of the disc centered at the origin of radius $1/2$, so that $|z-w| \le 1$ and
$\log \frac{1}{|z-w|} \ge 0.$ Then the Wiener capacity is \eqref{newtoniancap} with the kernel $1/|x-y|^{d-2}$
replaced by $\log \frac{1}{|z-w|}$. The energy of a measure is \eqref{energy} with the same change in the
kernel. The relation \eqref{capenergy} holds true. We have
\begin{equation*}
 \Capa(B(a,r)) = \frac{1}{\log \frac{1}{r}}, \quad |a|< 1/4, \quad 0 < r < 1/4.
\end{equation*}

Note that the definition of $C^1$ harmonic capacity is similar in
structure to that of Wiener or Newtonian capacities. In
\eqref{kappa} the supremum is taken on all distributions with
support in the set $E$ whose potential satisfies a certain
inequality and in \eqref{newtoniancap} only positive measures are
considered. This is a minor difference: one can show that $\Capa(E)$
is the supremum of $|\langle T, 1 \rangle|$ over all
distributions~$T$ supported on $E$ such that the potential
$1/|x-y|^{d-2}* T$ is a function in $L^\infty(\Rd)$ with norm
bounded by $1$ (see, for instance, \cite{V}). The essential
difference lies in the fact that the kernel involved in the
definition of $C^1$ harmonic capacity  is vectorial and each of its
components is a kernel of variable sign. Then subtle cancellation
phenomena have to be taken into account, which explains the enormous
difficulties arising in the study of $C^1$ harmonic capacity. See
subsection 2.3 below.

\subsection{Singular integrals on subsets of \boldmath$\Rd.$}
Let $m$ be a positive finite Borel measure. Set, for $f \in L^2(m)$,
\begin{equation*}
R_\varepsilon(f m )(x)= \int_{|y-x|>\varepsilon} \frac{x-y}{|x-y|^d}
f(y)\,dm(y), \quad x \in \Rd, \quad \varepsilon > 0.
\end{equation*}
We say that the operator $R$ with kernel $x/|x|^d$ is bounded on
$L^2(m)$ if there exists a constant $C$ such that
\begin{equation}\label{Rfitat}
\int | R_\varepsilon(f m)(x)|^2 \, dm(x) \le C\,  \int |f(x)|^2\,dm(x),
\quad \varepsilon > 0.
\end{equation}
In other words, the truncated operators $R_\varepsilon$ are uniformly
bounded in $L^2(m)$. If $m$ has no atoms, then a necessary condition
for boundedness is the growth condition
\begin{equation*}
m(B(x,r)) \le C \, r^{d-1}, \quad x \in \Rd, \quad 0 < r.
\end{equation*}
Our differentiability theorems depend on the existence of the
principal values
\begin{equation}\label{pv}
\pv \int \frac{x-y}{|x-y|^d} \,dm(y) = \lim_{\varepsilon \rightarrow 0}
R_{\varepsilon}(m)(x).
\end{equation}
In classical Calder\'{o}n--Zygmund theory existence of principal values
is a consequence of the $L^2$ estimate \eqref{Rfitat}, but in the
non-doubling context we are considering existence of principal
values is a much subtler issue. A general result which applies to
our situation appeals  to the vanishing of $(d-1)$-dimensional
density, that is,
\begin{equation}\label{zerodensity}
\lim_{r \rightarrow 0} \frac{m(B(x,r))}{r^{d-1}} = 0, \quad x \in
\Rd.
\end{equation}
It was proven in \cite{MV} that \eqref{Rfitat} and
\eqref{zerodensity} imply existence of the principal values
\eqref{pv} $m$-a.e. This in turn yields, by classical Calder\'{o}n--Zygmund theory arguments, the $m$ a.e.
existence of the principal values
\begin{equation*}
\pv \int \frac{x-y}{|x-y|^d} \,d\nu(y) = \lim_{\varepsilon \rightarrow 0} \int_{|y-x|>\varepsilon} \frac{x-y}{|x-y|^d}
\,d\nu(y),
\end{equation*}
for each finite Borel measure $\nu$.

\subsection{\boldmath$C^1$ harmonic capacity.}  Consider the quantity
\begin{equation*}
\kappa^o(E) = \sup m(E)
\end{equation*}
where the supremum is taken over all positive finite Borel measures $m$ supported on~$E$ such that
$m(B(x,r)) \le r^{d-1}, \; x \in \Rd, \; 0< r,$ $\lim_{r \rightarrow 0} \frac{m(B(x,r))}{r^{d-1}}=0, \;x \in \Rd,$ and
the operator $R$ is bounded on $L^2(m)$ with constant $1$ (that is, \eqref{Rfitat} holds with $C=1$).
In~\cite{RT} one shows that there exists a constant depending only on dimension such that
\begin{equation}\label{kappakappaop}
C^{-1} \,\kappa^o(E) \le  \kappa^c(E) \le C\, \kappa^o(E)
\end{equation}
for all compact sets $E \subset \Rd$. This is a deep result, depending on previous work of Tolsa on semi-additivity of analytic capacity in the plane. In fact, $C^1$ harmonic capacity in the plane turns out to be comparable to continuous analytic capacity. We will use \eqref{kappakappaop} in combination with the previous subsection to conclude that if $\kappa^c(E) > 0$ then there exists a non-zero finite Borel measure $m$ supported on $E$ with zero $(d-1)$-dimensional density for which the principal values  \eqref{pv} exist $m$-a.e.

\subsection{\boldmath$L^2$ boundedness of \boldmath$R$.} There is a non-trivial sufficient condition for boundedness of the operator $R$ on $d=2$ found by Mattila in \cite{M2}. If a positive finite Borel measure $m$ in $\C$ satisfies the growth condition
\begin{equation*}
m(B(x,r)) \le C\, \varphi(r)  , \quad 0 < r < e^{-1},
\end{equation*}
where $\varphi$ is the function \eqref{fi}, then the operator $R$ is bounded on $L^2(m).$ The proof of a
more general result is a
calculation based on Menger curvature. This will be used in combination with the result of subsection 2.2 to conclude that the principal values in \eqref{pv} exist $m$-a.e.

\subsection{Cantor sets.} Along the paper we will make a couple of constructions to show sharpness of our theorems, which
involve planar Cantor sets. Now we recall the definition. Take a sequence $(\lambda_n)_{n=1}^\infty$ such that
$0< \lambda_n < 1/2.$ Start with the unit square $Q_0= [0,1]\times [0,1]$. Take $4$ squares contained in
$Q_0$, with sides of length $\lambda_1$ parallel to the coordinate axis, each with a vertex in common
with $Q_0$. Repeat the operation in each of these $4$ squares with the dilation factor $\lambda_2$ in place of
$\lambda_1$. We obtain $16$ squares of side length $\lambda_1 \lambda_2$. Proceeding inductively we get at
the $n$-th generation $4^n$ squares $Q_j^n, \; 1 \le j \le 4^n,$ of side length
$\sigma_n=\prod_{k=1}^n \lambda_k$. Define the Cantor set associated with the sequence $(\lambda_n)_{n=1}^\infty$
to be $K=\cap_{n=1}^\infty \left(\cup_{j=1}^{4^n} Q_j^n\right).$ There is a unique Borel measure $\mu$
supported on $K$ such that $\mu(Q_j^n)=1/4^n$ for all $j$ and~$n$. This measure plays the role of
canonical measure on the Cantor set.

There is a special family of Cantor sets $K_\beta$, which is worthwhile keeping in mind as a working example, depending on a parameter $\beta \ge 0$,
associated with the sequence
\begin{equation*}
\lambda_n = \frac{1}{4}(1+\frac{\beta}{k}), \quad k=1, 2, \dotsc
\end{equation*}
With this choice of $\lambda_n$ we have
\begin{equation*}
\sigma_n \simeq \frac{n^\beta}{4^n}.
\end{equation*}
The canonical measure $\mu$ on $K_\beta$ satisfies the growth condition
\begin{equation}\label{growthmu}
\mu(B(z,r)) \simeq r \,\frac{1}{\log^\beta(\frac{1}{r})}, \quad z \in K_\beta, \quad 0 < r < 1.
\end{equation}
Note that the function on the right hand side of \eqref{growthmu} is $\varphi(r)$ for $\beta =1$ and $\psi(r)$
for $\beta=2$.

For $\beta =0$ we get the famous ``corner quarters'' Cantor set, which has positive finite length but zero
analytic capacity. For $\beta > 0$ the corresponding Cantor set $K_\beta$ is a compact set of infinite length
and Hausdorff dimension $1$.

Combining the results of \cite{MTV} and \cite{T1}, we see that for
$0 \le \beta \le 1/2$ the principal values
\begin{equation}\label{pv2}
\pv \int \frac{z-w}{|z-w|^2} \,d\mu(w) =
\lim_{\varepsilon \rightarrow 0} \int_{|w-z| > \varepsilon} \frac{z-w}{|z-w|^2} \,d\mu(w)
\end{equation}
do not exist $\mu$-a.e. As we will show later, this implies that the logarithmic potential of
$\mu$ is not differentiable in the weak capacity sense at $\mu$ almost all points of $K_\beta.$

For $1/2 < \beta$ the operator $R$ with kernel $z/|z|^2$ is bounded on $L^2(\mu)$ (see \cite{MTV}) and so the principal values
\eqref{pv2} exist $\mu$ a.e. (by subsection 2.2). In this case the logarithmic potential of $\mu$ is differentiable in the ordinary
sense $\mu$-a.e., as it will be shown later.

Consider a measure function  $\Phi \colon [0,\infty) \rightarrow
[0,\infty)$, that is,  a continuous (strictly)  increasing function with $\Phi(0)=0$.
Associated with $\Phi$ there is a Cantor set $K$ whose canonical
measure satisfies $\mu(B(x,r)) \le C\,\Phi(r)$ for $x \in K$ and $0
< r < 1$, provided one has
 \begin{equation}\label{limsuph4}
 \limsup_{r \rightarrow 0} \frac{\Phi(2r)}{\Phi(r)}< 4.
 \end{equation}
The construction of the Cantor set proceeds as follows.  Define $\sigma_n$ by $ 4^{-n}= \Phi(\sigma_n)$ and then set $\lambda_n= \sigma_n /\sigma_{n-1}.$ To implement the definition of the Cantor set one needs to check that $\lambda_n < 1/2$. This follows readily for $n$ large enough from \eqref{limsuph4}.  Indeed, by \eqref{limsuph4} there exists a positive constant $C$, $C < 4$, such that $\Phi(2r) \le C\, \Phi(r)$  for $r$ sufficiently small. Thus, for $n$ large enough,
\begin{equation*}
\Phi(\sigma_{n})= \frac{\Phi(\sigma_{n-1})}{4}  \le \frac{C}{4}\,\Phi(\frac{\sigma_{n-1}} {2}) <  \Phi(\frac{\sigma_{n-1}} {2})
\end{equation*}
and so $\sigma_n <  \sigma_{n-1} /2$.


If $\sigma_n  \le  r < \sigma_{n-1}$ and $x \in K$, then $B(x,r)$ is
contained in at most $4^3$ squares $Q^n_j$.  Then  $\mu(B(x,r)) \le
4^3  \Phi(r)$.

 If the lim sup in \eqref{limsuph4} is exactly $4$ then the preceding construction fails for the function
 $\Phi(r) = r^2 \log (1/r)^{-1}$, because a measure satisfying $\mu(B(x,r)) \le C \, r^2 \log (1/r)^{-1}$,
  $x \in K$, $0 < r < 1$, is identically zero.

  The measure function giving the Cantor set $K_\beta$ is $\Phi(r)= r / \log^\beta (1/r)$.

 In dimension $d$ condition \eqref{limsuph4} should be modified replacing the upper bound $4$ by $2^d$.

\section{Proof of Theorem \ref{teo1}}

\subsection{The sufficient condition in Theorem \ref{teo1}.}
The reader will recognize in the decomposition we are going to use the basic classical argument
in \cite[p.~242]{S}. Assume that $a=0$ to simplify notation. In view of \eqref{gradpot} we set
$A_i = (d-2) \pv \int \frac{y_i}{|y|^d} \,d\mu(y)$.
We have to show that
\begin{equation*}
\lim_{r \rightarrow 0} \frac{\sup_{t > 0} t \operatorname{Cap}(\{x \in B(0,r) : Q(x) > t \})}
{\operatorname{Cap}(B(0,r))} = 0,
\end{equation*}
 where the quotient $Q(x)$ is
 \begin{equation*}
Q(x) = \frac{|u(x)-u(0) - \sum_{i=1}^d A_i x_i|}{|x|}.
\end{equation*}
 Given $r >0$ and $x \in B(0,r)$ set $\varepsilon=2|x|$. Then, denoting by $\langle v,w\rangle$ the scalar product of the
 vectors $v$ and $w$,
 \begin{equation*}
 \begin{split}
Q(x)&\le \frac{|u(x)-u(0)+(d-2)\langle R_{\varepsilon}(\mu)(0),x\rangle|}{|x|}+(d-2)\,\sup_{0<\varepsilon\le 2r} |R_{\varepsilon} (\mu)(0)-R(\mu)(0)|\\
&\equiv A_{\varepsilon}(x)+T_r.
\end{split}
 \end{equation*}
Hence
\begin{equation*}
 \begin{split}
\sup_{t>0} \frac{t\operatorname{Cap}\left\{x\in B(0,r): Q(x)>t\right\}}{\operatorname{Cap}(B(0,r))}&\le \sup_{t>0} \frac{t\operatorname{Cap}\left\{x\in B(0,r): A_{\varepsilon}(x)>\frac{t}{2}\right\}}{\operatorname{Cap}(B(0,r))}\\*[7pt]
&\quad +\sup_{t>0}\frac{t\operatorname{Cap}\left\{x\in B(0,r): T_r>\frac{t}{2}\right\}}{\operatorname{Cap}(B(0,r))}\\*[7pt]
&\le 2\sup_{t>0} \frac{t\operatorname{Cap}\left\{x\in B(0,r): A_{\varepsilon}(x)>t\right\}} {\operatorname{Cap}(B(0,r))}+2T_r.
\end{split}
\end{equation*}
Since $T_r \rightarrow 0$ as $r \rightarrow 0$ we only need to estimate the first term in the right hand side above. Clearly
\begin{equation*}
\begin{split}
A_{\varepsilon}(x)&\le \left|\int_{|y|>\varepsilon} \frac{1}{|x|} \left( \frac{1}{|x-y|^{d-2}} -\frac{1}{|y|^{d-2}} -(d-2)\left\langle \frac{y}{|y|^d},x\right\rangle\right)\,d\mu(y)\right|\\*[7pt]
&\quad +\left|\int_{|y|<\varepsilon} \frac{1}{|x|}\left(\frac{1}{|x-y|^{d-2}}-\frac{1}{|y|^{d-2}}\right)\,d\mu(y)\right|\\*[7pt]
&\equiv B_{\varepsilon} (x) +C_{\varepsilon}(x).
\end{split}
\end{equation*}
 By the mean value theorem and integration by parts, for each
positive integer $N$ one has
 \begin{equation}\label{Bmp}
\begin{split}
B_{\varepsilon}(x) &\le C\int_{|y|>\varepsilon} \frac{|x|}{|y|^d}\,d\mu(y)\\*[7pt]
&\le C \varepsilon\int^{\infty}_{\varepsilon} \frac{d\mu B(0,\rho)} {\rho^d}\\*[7pt]
&=C\varepsilon \left\{\left[ \frac{\mu B(0,\rho)}{\rho^d}\right]_{\varepsilon}^{\infty} +d \int^{\infty}_{\varepsilon} \frac{\mu B(0,\rho)}{\rho^{d+1}}\,d\rho\right\}\\*[7pt]
&\le C\varepsilon \left\{\int^{\varepsilon N}_{\varepsilon}\frac{\mu B(0,\rho)}{\rho^{d+1}}\,d\rho+\int^{\infty}_{\varepsilon N} \frac{\mu B(0,\rho)}{\rho^{d+1}}\,d\rho\right\}\\*[7pt]
&\le C\sup_{0<\rho<2rN} \frac{\mu B(0,\rho)}{\rho^{d-1}} +\frac{C}{N}\sup_{0<\rho}\frac{\mu B(0,\rho)}{\rho^{d-1}}.
\end{split}
\end{equation}
 Since $N$ is arbitrary, in view of \eqref{densitymu} we conclude that
 \begin{equation*}
\lim_{r \rightarrow 0} \left(\sup_{|x|< r} B_\varepsilon(x)\right)=0.
\end{equation*}
We now turn our attention to $C_\varepsilon(x)$. Introducing the absolute value inside the integral
 \begin{equation*}
C_{\varepsilon} (x) \le \frac{1}{|x|} \int_{|y|<\varepsilon} \frac{d\mu(y)}{|y-x|^{d-2}} +\frac{1}{|x|} \int_{|y|<\varepsilon} \frac{d\mu(y)}{|y|^{d-2}}\equiv D_{\varepsilon}(x)+ F_{\varepsilon}(x).
\end{equation*}
 The term $F_\varepsilon(x)$ is estimated readily by
\begin{equation*}
\begin{split}
F_{\varepsilon}(x)&=\frac{1}{|x|} \int^{\varepsilon}_0 \frac{d\mu B(0,\rho)} {\rho^{d-2}}\\*[7pt]
&=\frac{1}{|x|} \left\{ \left[ \frac{\mu B(0,\rho)}{\rho^{d-2}} \right]_0^{\varepsilon} +(d-2)\int_0^{\varepsilon} \frac{\mu B(0,\rho)}{\rho^{d-1}}\,d\rho\right\}\\*[7pt]
&\le C\sup_{0<\rho\le 2r} \frac{\mu B(0,\rho)}{\rho^{d-1}},
\end{split}
\end{equation*}
 and thus
\begin{equation*}
\lim_{r \rightarrow 0} \left(\sup_{|x|< r} F_\varepsilon(x)\right)=0.
\end{equation*}
It remains to bound $D_\varepsilon(x)$ and here is the only place where  a capacitary
estimate, based on \eqref{wcap}, is used. We have
\begin{equation*}
D_{\varepsilon}(x)\le \int_{|y|<2r} \frac{1}{|x-y|^{d-2}}\frac{d\mu(y)}{|y|},
\end{equation*}
and the mass of the measure $d\mu(y)/|y|$ is estimated by
\begin{equation*}
\begin{split}
\int_{|y|<2r} \frac{d\mu(y)}{|y|} &= \int^{2r}_0\frac{d\mu B(0,\rho)}{\rho}\\*[7pt]
&= \left[\frac{\mu B(0,\rho)}{\rho}\right]^{2r}_0+ \int_0^{2r}\frac{\mu B(0,\rho)}{\rho^2}\,d\rho\\*[7pt]
&\le C r^{d-2}\sup_{0<\rho<2r} \frac{\mu B(0,\rho)}{\rho^{d-1}}.
\end{split}
\end{equation*}
By \eqref{wcap}
\begin{equation*}
\sup_{t>0} \frac{t\operatorname{Cap}\left\{x\in B(0,r): D_{\varepsilon}(x)>t\right\}} {\operatorname{Cap}(B(0,r))} \le C\sup_{0<\rho<2r} \frac{\mu B(0,r)}{\rho^{d-1}},
\end{equation*}
which tends to $0$ with $r$.

\subsection{The necessary condition in Theorem \ref{teo1}.}
 The Green function for the ball $B(a,r)$ is
 $$\frac{1}{|x-a|^{d-2}} -\frac{1} {r^{d-2}}, \quad |x-a| < r.$$
 By Poisson--Green formula for $u$ and the ball~$B(a,r)$
\begin{equation*}
\begin{split}
u(a)&=\frac{1}{\sigma(\partial B(a,r))} \int_{\partial B(a,r)} (u(x)-\langle A,x-a\rangle)\,d\sigma(x)\\*[7pt]
&\quad + c_d \int_{B(a,r)} \left( \frac{1}{|x-a|^{d-2}} -\frac{1} {r^{d-2}}\right)\,d\mu(x),
\end{split}
\end{equation*}
where $A=(A_{1},\dotsc,A_d)$ is the gradient in the definition of differentiability in the capacity sense~\eqref{defdifcap} and $c_d$ is a positive constant. Since
$$
\frac{1}{|x-a|^{d-2}}-\frac{1}{r^{d-2}}\ge c_d \frac{1}{r^{d-2}},\quad |x-a|<\frac{r}{2},
$$
we obtain
\begin{equation}\label{sigmagreen}
\begin{split}
c_d \frac{1}{r^{d-2}} \mu B\left(a,\frac{r}{2}\right) &\le \frac{1}{\sigma (\partial B(a,r))}
\int_{\partial B(a,r)}|u(x)-u(a)-\langle A, x-a\rangle|\,d\sigma(x)\\*[7pt]
&=\frac{1}{\omega_{d-1} r^{d-1}} \int^{\infty}_0 \sigma\left\{x\in \partial B(a,r): |D u(x)|>t\right\}\,dt,
\end{split}
\end{equation}
where $\omega_{d-1}=\sigma (S^{d-1})$ and
\begin{equation}\label{Du}
 D u(x)=u(x)-u(a)-\langle A,x-a\rangle,\quad x\in\mathbb{R}^d.
\end{equation}
Estimating from above the potential of the measure $\chi_E(x)\,d\sigma(x)$ one readily obtains the well known estimate \cite[Corollary 5.1.14]{AH}
\begin{equation}\label{sigmacap}
c_d \,\sigma(E)\le \operatorname{Cap}(E)^{\frac{d-1}{d-2}},\quad E\subset \partial B(a,r).
\end{equation}
Hence, the right hand side of~\eqref{sigmagreen} is not greater than
\begin{equation}\label{intfron}
c_d\,\frac{1}{r^{d-1}}\int^{\infty}_0 \operatorname{Cap}^{\frac{d-1}{d-2}} \left\{x\in\partial B(a,r):|D u(x)|>t\right\}\,dt.
\end{equation}
We split the integral between $0$ and $\infty$ into two pieces: first we integrate between $0$ and $T$ and
then between $T$ and $\infty$. The positive number~$T$ will be chosen later. For the integral between $0$
and~$T$ we estimate the capacity of the set $\{x\in \partial B(a,r): |D u(x)|>t\}$ by
$\operatorname{Cap}(\partial B(a,r))=c_d r^{d-2}$. Thus
$$
\frac{c_d}{r^{d-1}} \int^T_0 \operatorname{Cap}^{\frac{d-1}{d-2}} \left\{x\in\partial B(a,r): |D u(x)|>t\right\}\,dt\le c_dT.
$$
Define $\varepsilon(r)$ as
$$
\varepsilon(r)=\frac{\sup\limits_{t>0} t\operatorname{Cap}\left\{x\in B(a,r): |D u(x)|>t\right\}} {r\operatorname{Cap}(B(a,r))},
$$
so that $\varepsilon(r)\to 0$ as $r\to 0$ if $u$ is differentiable in the capacity sense at~$a$. We get
\begin{equation*}
\begin{split}
\frac{1}{r^{d-1}}\int^{\infty}_T\operatorname{Cap}^{\frac{d-1}{d-2}} \left\{x\in\partial B(a,r):|D u(x)|>t\right\}
\,dt&\le c_d\,\left(r \varepsilon(r)\right)^{\frac{d-1}{d-2}}\int^{\infty}_T \frac{dt}{t^\frac{d-1}{d-2}}\\*[7pt]
&=c_d \,\left(r\varepsilon(r)\right)^{\frac{d-1}{d-2}}\frac{1}{T^{\frac{1}{d-2}}}.
\end{split}
\end{equation*}
The upper bound we obtain for \eqref{intfron} is
$$
c_d \left(T+\left(r\varepsilon(r)\right)\right)^{\frac{d-1}{d-2}}\frac{1}{T^{\frac{1}{d-2}}},
$$
which is minimized by $T=r\varepsilon(r)$. Therefore
$$
c_d\frac{1}{r^{d-2}} \mu B\left(a,\frac{r}{2}\right)\le r\varepsilon (r),
$$
which yields \eqref{densitymu}.


It remains to prove the existence of the principal value \eqref{pvmu}.
Assume that $a=0$ to simplify the writing. We know that there exist $A_i, \; 1 \le i \le d,$ such that
$$
\varepsilon(r)=\frac{\sup\limits_{t>0} t\operatorname{Cap}\left\{x\in B(0,r): |D u(x)|>t\right\}}
{r\operatorname{Cap}(B(0,r))}
$$
tends to $0$ with $r$. Here $Du$ is as in \eqref{Du} with $a=0$. Set
$$R_r= (d-2) \int_{|y|> r} \frac{y}{|y|^d}\,\mu(y), \quad r >0 .$$
Given $r >0$ and $x \in B(0,r), \; x \neq 0,$ we have
\begin{equation*}
\begin{split}
|\langle R_{2|x|} -A,x\rangle| &\le Du(x)
+|u(x)-u(0)-\langle R_{2|x|},x\rangle|\\
&\equiv Du(x)+ Eu(x),
\end{split}
\end{equation*}
and
\begin{equation*}
\begin{split}
\eta(r)&:= \sup_{t>0} \frac{t\operatorname{Cap}\left\{x\in B(0,r):|\langle R_{2|x|}-A,x\rangle|>t\right\}}{r\operatorname{Cap}(B(0,r))}\\*[7pt]
&\le 2\varepsilon (r)+2 \sup_{t>0}\frac{t\operatorname{Cap}\left\{x\in B(0,r):Eu(x)>t\right\}}{r\operatorname{Cap}(B(0,r))}.
\end{split}
\end{equation*}
In the proof the sufficiency in subsection 3.1 we showed that the second term in the right hand side of the
preceding inequality tends to $0$ with $r$. Therefore $\eta(r)$ tends to $0$ as $r$ tends to $0$.

If $R_r \neq A$ define
\begin{equation}\label{kar}
K_r=\left\{x\in\mathbb{R}^d: |x|=\frac{r}{2}\quad\text{and}\quad \left\langle
\frac{x}{|x|},\frac{R_r-A}{|R_r-A|}\right\rangle\ge \frac{1}{\sqrt{2}}\right\}.
\end{equation}
Observe that $K_r$ is the intersection of the sphere of center $0$ and radius $r/2$ with a cone with vertex
at $0$, axis determined by the unit vector in the direction of $R_r - A$, and aperture $\pi/4$. A dilation
argument shows that $\Capa(K_r)= c_d\, r^{d-2}$. Hence, if $R_r \neq A$,
\begin{equation*}
\operatorname{Cap}\left\{ x\in B(0,r): |\langle R_{2|x|}-A,x\rangle|> |R_r-A|\frac{r}{2^{5/2}}\right\}\ge
\operatorname{Cap}(K_r)=c_d r^{d-2}.
\end{equation*}
Taking $t=|R_r -A| r/2^{5/2}$ in the definition of $\eta(r)$ we get
\begin{equation*}
\eta(r)\ge c|R_r-A|,
\end{equation*}
and therefore
$$
\lim_{r \rightarrow 0} R_r = A.
$$

\section{Proof of Theorem \ref{teo2}}
Let $\mu$ be a finite Borel measure in $\Rd, \, d \ge 3,$ and let
$u$ be its Riesz potential, as in \eqref{pot}. In proving Theorem
\ref{teo2} we can assume, without loss of generality, that $\mu$ is
positive. Let $E$ be
 the set of points where $u$
 is not differentiable in the capacity sense. Take a positive finite Borel measure~$m$ supported on a compact
subset of $E$ satisfying  $m(B(x,r)) \le r^{d-1},$ $ \; x \in \Rd, \; 0< r,$ $\lim_{r \rightarrow 0} m(B(x,r)
)/ r^{d-1}=0, \;x \in \Rd,$ and such that the operator $R$ with kernel $x-y/|x-y|^d$ is bounded on $L^2(m)$. We will show that $u$ is
differentiable in the capacity sense $m$ a.e. Hence $m$ must be identically zero and thus $k^c(E)=0$,
as desired.

The Radon--Nikodym decomposition of $\mu$ with respect to $m$ is $\mu = f \,m + \mu_s$
where $f \in L^1(m)$ and $\mu_s$ is singular with respect to $m$. On the one hand
one has
$$
\lim_{r \rightarrow 0} \frac{\mu_s(B(a,r))}{m(B(a,r))}=0
$$
at $m$ almost all points $a$ and, on the other hand, $m$ almost all points are Lebesgue points of $f$.
Hence
$$
\mu(B(a,r)) \le C(a) \, m(B(a,r)), \quad 0 < r,
$$
at $m$ almost all points $a$, $C(a)$ being a constant which depends only on the point $a$. Since
the operator $R$ with kernel $x-y/|x-y|^d$ is bounded on $L^2(m)$, by subsection 2.2 the principal value
\begin{equation*}
\pv \int \frac{a-y}{|a-y|^d}\, d\mu(y)
\end{equation*}
exists at $m$ a.e. Hence we can apply the sufficient condition in Theorem~\ref{teo1} to
conclude that  the potential $u$ of $\mu$ is differentiable in the capacity sense at $m$ almost all points,
which completes the proof.

\begin{example} Let $\sigma$ be the surface measure on the unit sphere $S=\{x \in \Rd : |x| =1\}$. Since $\sigma$ has non-zero
$(d-1)$-dimensional density, one can apply Theorem \ref{teo1} to conclude that the Riesz potential
$1/|x|^{d-2}*\sigma, \; d \ge 3$  is not differentiable in the capacity sense
at any point of $S.$ One can avoid appealing to Theorem \ref{teo1} and  make a direct
calculation, which works also in dimension $d=2$ for the
logarithmic potential $\log (1 / |z|)* \sigma.$ Since $S$ has positive and finite
$(d-1)$-dimensional Hausdorff measure we get a
satisfactory example showing that Theorem \ref{teo2} is sharp.
\end{example}

\section{Proof and sharpness of Theorem \ref{teo3}}
Let $\mu$ be a finite Borel measure and $u$ its logarithmic
potential, as in  \eqref{potlog}. For the purpose of proving Theorem
\ref{teo3} one can assume, without loss of generality that $\mu$ is
positive. Let $E$ stand for the set of points at which $u$ is not
differentiable in the weak capacity sense. Take a positive finite
Borel measure $m$ with compact support contained in $E$ satisfying
the growth condition $m(B(z,r) \le \varphi(r), \; 0 <r,$ where
$\varphi$ is the function in \eqref{fi}. If we see that $u$ is
differentiable in the weak capacity sense at $m$ almost all points,
then $m$ has to be identically $0$ and hence $H^\varphi(E)=0$.

The Radon--Nikodym decomposition of $\mu$ with respect to $m$ has the form  $\mu = f m+ \mu_s$, with
$f \in L^1(m)$ and $\mu_s$ singular with respect to $m$. Given a point $a$ set $\nu = (f-f(a)) m + \mu_s$
so that $\mu = \nu + f(a)m$. At $m$ almost all points $a$ one has
\begin{equation}\label{nug}
 |\nu|(B(a,r)) \le \eta(r)\,\varphi(r)
\end{equation}
where $\eta$ is a function depending on $a$ with  $\eta(r) \rightarrow 0$ as $r \rightarrow 0$. We plan to
show that the logarithmic potential
of $\nu$ is differentiable in the weak capacity sense at the
point $a$ if \eqref{nug} is satisfied and the principal value $\pv (a-w)/|a-w|^2 \,d\nu(w)$ exists. As we
mentioned in subsection 2.4 the growth condition fulfilled by
 $m$ implies that the operator $R$ with kernel $(z-w)/|z-w|^2$ is bounded on $L^2(m)$, which yields $m$
a.e.\ existence of the principal values $\pv (a-w)/|a-w|^2 \,d\nu(w)$  for each finite Borel measure $\nu$
(by subsection 2.2). Finally we will show that the logarithmic potential of $m$ is
differentiable in  the ordinary sense  $m$ a.e. This will complete the proof of Theorem \ref{teo3}.

We first deal with the logarithmic potential of $m$.

\begin{lemma}\label{lemma1}
Let $m$ be a positive finite Borel measure such that
\begin{equation*}
 m(B(z,r)) \le \eta(r)\, r, \quad z \in \C, \quad 0 <r,
\end{equation*}
with $\eta(r) \rightarrow 0$  as $r \rightarrow 0$,  and the principal value
\begin{equation*}
\pv \int \frac{a-w}{|a-w|^2} \,d\mu(w)
\end{equation*}
exists at the point $a$. Then the logarithmic potential of $m$ is differentiable in the ordinary
sense at the point $a$.
\end{lemma}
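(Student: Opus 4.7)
Setting $a=0$ without loss of generality and letting $A = \pv\int_{\mathbb{C}} (w/|w|^2)\, dm(w)$ (which exists by hypothesis), I would adapt the Calder\'on--Zygmund decomposition from the proof of the sufficient direction of Theorem~\ref{teo1} (subsection~3.1) to the logarithmic kernel. For $z$ with $|z|$ small, set $\varepsilon = 2|z|$ and write
\begin{equation*}
u(z)-u(0)-\langle A,z\rangle \;=\; I_1(z) \,+\, I_2(z) \,+\, \langle R_\varepsilon(m)(0)-A,\,z\rangle,
\end{equation*}
where $R_\varepsilon(m)(0)=\int_{|w|>\varepsilon}(w/|w|^2)\,dm(w)$ and
\begin{equation*}
I_1 = \int_{|w|>\varepsilon}\!\Bigl[\log\tfrac{1}{|z-w|}-\log\tfrac{1}{|w|}-\tfrac{\langle w,z\rangle}{|w|^2}\Bigr]dm(w),\qquad I_2 = \int_{|w|\le\varepsilon}\!\Bigl[\log\tfrac{1}{|z-w|}-\log\tfrac{1}{|w|}\Bigr]dm(w).
\end{equation*}
The correction term is $o(|z|)$ immediately from $R_\varepsilon(m)(0)\to A$. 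For $I_1$, the second-order Taylor expansion of $\log|z-w|$ in powers of $z/w$ (valid since $|z/w|<1/2$) bounds the integrand by $C|z|^2/|w|^2$; a dyadic decomposition with parameter $N$, combined with $m(B(0,\rho))\le\eta(\rho)\rho$ and the finiteness of $m$ exactly as in the estimate leading to \eqref{Bmp}, yields $|I_1(z)|/|z|\le C\eta^*(2N|z|)+C/N^2$ for every integer $N$, where $\eta^*(s):=\sup_{\rho\le s}\eta(\rho)$; letting first $|z|\to 0$ and then $N\to\infty$ gives $I_1=o(|z|)$.

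The delicate point is $I_2$. A straight triangle-inequality bound yields only $|I_2|=O(\eta(|z|)|z|\log(1/|z|))$, which is merely $O(|z|)$ rather than $o(|z|)$ precisely in the borderline case $\eta(r)=1/\log(1/r)$ needed for the application to Theorem~\ref{teo3}. To recover $o(|z|)$ one must exploit the cancellation hidden in $\log(|w|/|z-w|)$, which is a function of $w$ harmonic away from $\{0,z\}$. I would split $\{|w|\le\varepsilon\}$ into the deep-near-origin region $\{|w|<|z|/2\}$ and the middle annulus $\{|z|/2\le|w|\le 2|z|\}$. On the first, the expansion $\log|z-w|=\log|z|-\langle w,z\rangle/|z|^2+O(|w|^2/|z|^2)$ (valid since $|w/z|<1/2$) lets the potentially log-divergent contributions of $\log|w|$ and $\log|z|$ combine into $\int\log(|z|/|w|)\,dm$, which by integration by parts is $O(|z|\eta^*(|z|))$; the remaining first- and second-order terms contribute $O(|z|\eta(|z|))$. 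Total on this region: $o(|z|)$.

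On the middle annulus I would introduce a secondary scale $\delta=\delta(|z|)\to 0$, chosen slowly---for instance $\delta(r)=1/\sqrt{\log(1/r)}$ in the application---and split further by whether $|w-z|\ge\delta|z|$ or $|w-z|<\delta|z|$. On the former, the integrand is uniformly bounded by $\log(2/\delta)$, so this piece contributes at most $\log(2/\delta)\cdot m(B(0,2|z|))\le C|z|\eta(|z|)\log(1/\delta)$. On the latter, the local mass bound $m(B(z,\delta|z|))\le\eta(\delta|z|)\delta|z|$ combined with layer-cake integration by parts centered at $z$ (applied to both $\log(1/|w|)$ and $\log(1/|z-w|)$) contributes of order $\delta|z|\,\eta(\delta|z|)\,[\log(1/|z|)+\log(1/\delta)]$. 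A choice of $\delta(|z|)\to 0$ satisfying $\eta(|z|)\log(1/\delta(|z|))\to 0$ and $\delta(|z|)\,\eta(\delta|z|)\,\log(1/|z|)\to 0$ makes both pieces $o(|z|)$; $\delta(r)=1/\sqrt{\log(1/r)}$ verifies these for $\eta(r)=1/\log(1/r)$.

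The main obstacle is precisely this two-scale analysis on the middle annulus, where neither the density bound alone nor the PV hypothesis alone suffices: it is the interplay between them through the secondary scale $\delta$ that rescues the argument. All remaining estimates reduce to one-variable integration by parts of the type already displayed in subsection~3.1.
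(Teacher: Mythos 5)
Your decomposition into $I_1$, $I_2$ and the correction term $\langle R_{2|z|}(m)(0)-A,z\rangle$ is exactly the paper's, and your treatment of $I_1$, of the correction term, and of the region $\{|w|<|z|/2\}$ is sound. The problem is the middle annulus. First, the two-scale argument is not needed, because you have overlooked that the hypothesis $m(B(z,\rho))\le\eta(\rho)\rho$ is \emph{uniform in the center} $z$: writing $\log\frac{1}{|w-z|}-\log\frac{1}{|w|}=\log\frac{3|z|}{|w-z|}-\log\frac{3|z|}{|w|}$ (the added constants cancel over the common domain $\{|w|<2|z|\}$), both resulting integrands are nonnegative there, and a single layer-cake integration by parts centered at $z$ gives
\begin{equation*}
\int_{|w-z|<3|z|}\log\frac{3|z|}{|w-z|}\,dm(w)=\int_0^{3|z|}\frac{m(B(z,\rho))}{\rho}\,d\rho\le\int_0^{3|z|}\eta(\rho)\,d\rho\le 3|z|\sup_{0<\rho<3|z|}\eta(\rho),
\end{equation*}
with no logarithmic loss whatsoever; the companion term centered at $0$ is handled identically. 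This is the paper's proof. There is no hidden cancellation between the two logarithms to exploit beyond the trivial cancellation of the constants $\log(3|z|)$; the genuinely delicate situation you have in mind (where only a capacitary, not pointwise, bound is available) is that of Lemma~\ref{lemma2}, where the density hypothesis holds only at the single point $a$.

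Second, your middle-annulus scheme as written does not prove the lemma in the stated generality. On $\{|w-z|\ge\delta|z|\}$ you bound the integrand by its supremum $\log(2/\delta)$ times the total mass, which requires $\eta(2|z|)\log(1/\delta)\to0$, i.e.\ $\delta\ge\exp(-o(1/\eta))$; on $\{|w-z|<\delta|z|\}$ you need $\delta\,\eta(\delta|z|)\log(1/|z|)\to0$, which forces $\delta$ to be small compared with $1/\log(1/|z|)$ up to factors of $\eta$. For slowly decaying $\eta$, say $\eta(r)=1/\log\log(1/r)$ (perfectly admissible under the hypotheses), the first condition forces $\delta\ge(\log(1/|z|))^{-\epsilon(|z|)}$ with $\epsilon(|z|)\to0$, while the second forces $\delta=o\bigl(\log\log(1/|z|)/\log(1/|z|)\bigr)$; these are incompatible. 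Your choice $\delta(r)=1/\sqrt{\log(1/r)}$ does work for the specific $\eta(r)=1/\log(1/r)$ arising in the application to Theorem~\ref{teo3}, but the lemma is stated for arbitrary $\eta\to0$. The loss comes entirely from the crude ``sup of integrand times mass'' step; replacing it by the integration by parts displayed above removes both the loss and the need for the secondary scale $\delta$.
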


\begin{proof}
 Assume that $a=0$ and set
 \begin{equation*}\label{Aplane}
A = \pv \int \frac{w}{|w|^2} \,dm(w),
\end{equation*}
 \begin{equation*}\label{Rep}
R_\varepsilon = \int_{|w|> \varepsilon} \frac{w}{|w|^2} \,dm(w), \quad \varepsilon >0,
\end{equation*}
 \begin{equation*}\label{Qplane}
Q(z) = \frac{|u(z)-u(0) - \langle A, z \rangle |}{|z|}, \quad z \in \C \setminus\{0\}.
\end{equation*}
 Then
\begin{equation*}\label{Qzmp}
Q(z) \le  \frac{|u(z)-u(0) - \langle R_{2|z|}, z \rangle |}{|z|}+ |R_{2|z|} - A|,
\quad z \in \C \setminus\{0\}.
\end{equation*}
 The second term in the right hand side above tends to $0$ with $z$ and the first can be estimated by
 \begin{equation*}\label{fet}
\begin{split}
\frac{1}{|z|} &\left| \int_{|w|>2|z|} \left( \log \frac{1}{|w-z|}-\log \frac{1}{|w|}-\left\langle\frac{w}{|w|^2},z\right\rangle\right)\right|\,dm(w)\\*[7pt]
&\quad+\frac{1}{|z|}  \int_{|w|<2|z|} \left( \log \frac{3|z|}{|w-z|}-\log \frac{3|z|}{|w|}\right)\,dm(w)\\*[7pt]
&\equiv A(z)+B(z).
\end{split}
\end{equation*}
The term $A(z)$ is treated by the mean value theorem and integration by parts similarly to what was done
in the proof of the sufficiency for Theorem \ref{teo1}. One gets
 \begin{equation*}\label{Azmp}
A(z)\le C |z|+C\eta (N|z|)+C \, \frac{1}{N} \,\sup_{0<\rho<\frac{1}{4}} \eta(\rho),
\end{equation*}
where $N$ is an arbitrary positive integer. Thus $\lim_{z \rightarrow 0} A(z)=0$.
We estimate $B(z)$ by
 \begin{equation*}\label{Bzmp}
\begin{split}
B(z)&\le \frac{1}{|z|} \int_{|w-z|<3|z|}\log \frac{3|z|} {|w-z|} \,dm(w)+ \frac{1}{|z|} \int_{|w|<3 |z|}\log\frac{3|z|}{|w|}\,dm(w)\\*[7pt]
&\equiv C(z)+D(z).
\end{split}
\end{equation*}
For $C(z)$ one has
 \begin{equation*}\label{Czmp}
\begin{split}
C(z)&=\frac{1}{|z|}\int^{3|z|}_0\log\frac{3|z|}{\rho} \,dm B(z,\rho)\\*[7pt]
&=\frac{1}{|z|}\int_0^{3|z|}\frac{mB(z,\rho)}{\rho}\,d\rho\\*[7pt]
&\le 3\sup_{0<\rho<3|z|}\eta( \rho),
\end{split}
\end{equation*}
which yields $\lim_{z \rightarrow 0} C(z)=0$.
A similar estimate for $D(z)$ gives that $\lim_{z \rightarrow 0} D(z)=0$, which completes the proof.
 \end{proof}

 It remains to deal with the differentiability in the weak capacity sense of the logarithmic potential of $\nu$. We can
 assume without loss of generality that $\nu$ is a positive measure. The following lemma settles the question.

\begin{lemma}\label{lemma2}
Let $\nu$ be a positive finite Borel measure such that
\begin{equation*}
 \lim_{r \rightarrow 0} \frac{\nu(B(a,r))}{\varphi(r)}=0
\end{equation*}
and the principal value
\begin{equation}\label{pvm}
\pv \int \frac{a-w}{|a-w|^2} \,d\nu(w)
\end{equation}
exists. Then the logarithmic potential of $\nu$ is differentiable in the weak capacity
sense at the point $a$.
\end{lemma}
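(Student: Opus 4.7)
\medskip

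\noindent\textbf{Proof proposal.} The plan is to mimic the sufficiency argument for Theorem \ref{teo1} and the proof of Lemma \ref{lemma1}, but at one crucial step replace a pointwise bound by the weak capacity inequality \eqref{wcap}. After translating so that $a=0$, I would take
$$A=\pv\int \frac{w}{|w|^2}\,d\nu(w),\qquad R_\varepsilon=\int_{|w|>\varepsilon}\frac{w}{|w|^2}\,d\nu(w),$$
and decompose
$$u(z)-u(0)-\langle A,z\rangle \;=\;\langle R_{2|z|}-A,z\rangle \;+\; \mathrm{I}(z) \;+\; \mathrm{III}(z),$$
where
$$\mathrm{I}(z)=\int_{|w|>2|z|}\Bigl[\log\tfrac{|w|}{|z-w|}-\bigl\langle\tfrac{w}{|w|^2},z\bigr\rangle\Bigr]d\nu(w),\qquad \mathrm{III}(z)=\int_{|w|\le 2|z|}\log\tfrac{|w|}{|z-w|}\,d\nu(w).$$

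Three of the pieces can be handled pointwise, uniformly in $|z|\le r$, with the resulting bound being $o(r)$; dividing by $r\,\Capa(B(0,r))$ then gives $o(1)$, since a pointwise bound $M$ on a quantity $f$ yields $\sup_t t\Capa(\{|f|>t\}\cap B(0,r))\le M\,\Capa(B(0,r))$. The first is the principal-value tail $|\langle R_{2|z|}-A,z\rangle|\le|z|\,\sup_{\varepsilon\le 2r}|R_\varepsilon-A|$, which tends to $0$ by existence of \eqref{pvm}. The second is $\mathrm{I}(z)$: by the mean-value theorem $|\mathrm{I}(z)|\le C|z|^2\int_{|w|>2|z|}|w|^{-2}d\nu(w)$, and integration by parts combined with the estimate $\nu(B(0,\rho))\le \eta(\rho)\,\varphi(\rho)\le \eta(\rho)\rho$ (with $\eta(\rho)\to 0$) delivers the desired $o(r)$ bound, as in \eqref{Bmp}. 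The third is the ``origin part'' $\mathrm{III}_2(z):=\int_{|w|\le 2|z|}\log(3|z|/|w|)\,d\nu(w)$ extracted by writing $\log(|w|/|z-w|)=\log(3|z|/|z-w|)-\log(3|z|/|w|)$; integrating by parts radially against $d\nu B(0,\rho)$ yields $|\mathrm{III}_2(z)|\le C\,\eta(|z|)\varphi(|z|)=o(r/\log(1/r))=o(r)$.

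The heart of the proof, and the only place where the capacity weak norm is essential, is the remaining ``near $z$'' piece
$$\mathrm{III}_1(z)=\int_{|w|\le 2|z|}\log\tfrac{3|z|}{|z-w|}\,d\nu(w)\ge 0.$$
The key trick is to enlarge the truncation from $2|z|$ to the uniform $2r$: for $z\in B(0,r)$ and $w\in B(0,2r)$ we have $|z-w|\le 3r$, so $\log(3|z|/|z-w|)\le \log(3r/|z-w|)$ wherever the left hand side is positive. Hence on $B(0,r)$,
$$\mathrm{III}_1(z)\le V_r(z):=\int\log\tfrac{3r}{|z-w|}\,d\mu_r(w)=(\log 3r)\|\mu_r\|+P\mu_r(z),\qquad \mu_r:=\nu\llcorner B(0,2r).$$
Since $V_r\ge 0$ on $B(0,r)$, the event $\{V_r>t\}$ coincides for $t>0$ and $r$ small with $\{P\mu_r>t+|\log 3r|\,\|\mu_r\|\}$, to which \eqref{wcap} applies, giving $t\,\Capa(\{V_r>t\}\cap B(0,r))\le \|\mu_r\|=\nu(B(0,2r))$. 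Dividing by $r\,\Capa(B(0,r))=\varphi(r)$ and using $\varphi(2r)/\varphi(r)\to 2$ together with the hypothesis $\nu(B(0,2r))/\varphi(2r)\to 0$, the contribution of $\mathrm{III}_1$ to the weak capacity norm tends to zero. The main obstacle lies precisely here: the $z$-dependent truncation $2|z|$ prevents a direct appeal to the weak-type bound for a fixed measure, and the $|z|\rightsquigarrow r$ enlargement is the step that makes the inequality scale correctly against $\varphi(r)$.
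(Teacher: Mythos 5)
Your proposal is correct and follows essentially the same route as the paper: the same splitting into the principal-value tail, a far-field Taylor remainder and a near-origin logarithmic term (both handled pointwise by radial integration by parts against $\nu(B(0,\rho))\le\eta(\rho)\varphi(\rho)$), and the near-singularity piece, which after enlarging the truncation to $|w|<2r$ becomes the logarithmic potential of $\nu\llcorner B(0,2r)$ and is controlled by the weak capacity inequality \eqref{wcap} together with $\nu(B(0,2r))/\varphi(r)\to 0$. This is exactly the paper's estimate \eqref{Eump} and the subsequent bound $C\log(1/r)\,\nu(B(0,2r))/r\le C\eta(2r)$.
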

\begin{proof}
 Assume that $a=0$ and set
 \begin{equation*}\label{Anu}
A = \pv \int \frac{w}{|w|^2} \,d\nu(w),
\end{equation*}
 \begin{equation*}\label{Dupla}
Du(z) = |u(z)-u(0) - \langle A, z \rangle |,
\end{equation*}
 \begin{equation*}\label{Repnu}
R_\varepsilon = \int_{|w|> \varepsilon} \frac{w}{|w|^2} \,d\nu(w), \quad \varepsilon >0,
\end{equation*}
 \begin{equation*}\label{Eupla}
Eu(z) = |u(z)-u(0) - \langle R_{2|z|} , z \rangle |.
\end{equation*}
 Then
\begin{equation*}
\begin{split}
\sup_{t>0} \frac{t\operatorname{Cap}\left\{z\in B(0,r): Du(z)>t\right\}} {r\operatorname{Cap}(B(0,r))}
&\le 2\sup_{t>0} \frac{t\operatorname{Cap}
\left\{z\in B(0,r): Eu(z)>t\right\}}{r\operatorname{Cap}(B(0,r))}\\*[7pt]
&\quad +2\sup_{|z|<r} |R_{2|z|}-A|.
\end{split}
\end{equation*}
 The second term in the right hand side above tends to $0$ as $r \rightarrow 0$.
 To estimate the first note that for $z \in B(0,r)$
 \begin{equation}\label{Eump}
Eu(z)\le Cr\alpha(r)+\int_{|w|<2r}\log \frac{1}{|w-z|}\,d\nu(w) +\int_{|w|<2r}\log \frac{1}{|w|}\,d\nu(w),
\end{equation}
 with $\alpha(r) \rightarrow 0$ as $r \rightarrow 0$. This is proved as   in
 the sufficiency part of Theorem \ref{teo1}.

 The third term in the right hand side of \eqref{Eump} is
 \begin{equation}\label{3t}
\int_0^{2r}\log\frac{1}{\rho}\,d\nu B(0,\rho)=(\log\frac{1}{2r}) \nu B(0,2r) +\int_0^{2r}\frac{\nu B(0,\rho)}{\rho}\,d\rho \le
Cr\sup_{0<\rho<2r}\eta(\rho),
\end{equation}
where $\eta(\rho) = \nu (B(0,\rho) / \varphi(\rho)$.
 The second term in the right hand side of \eqref{Eump} is the logarithmic potential $P(\chi_{B(0,2r)} \nu)$
 of the measure $\chi_{B(0,2r)} \nu.$  This is estimated via \eqref{wcap} and we obtain
\begin{equation*}
\begin{split}
\sup_{t>0}\frac{t\operatorname{Cap}\left\{z\in B(0,r): P(\chi_{B(0,2r)}\nu)>t\right\}}
{r\operatorname{Cap}(B(0,r))}
&\le C\log (\frac{1}{r}) \frac{\nu (B(0,2r))}{r}\\*[7pt]
&\le C\eta(2r).
\end{split}
\end{equation*}
 Therefore gathering all previous inequalities
 \begin{equation*}
\sup_{t>0}\frac{t\operatorname{Cap}\left\{z\in B(0,r): Eu(z)>t\right\}}
{r\operatorname{Cap}(B(0,r))}\le C\alpha(r)+\sup_{0<\rho<2r}\eta(\rho),
\end{equation*}
 which tends to $0$ with $r$.
\end{proof}

There are a couple of necessary conditions for differentiability in the weak capacity sense which provide
interesting examples of positive measures with non-differentiable logarithmic potentials. The first is the complete
analogue of the
necessary condition in Theorem \ref{teo2} concerning the vanishing of $(d-1)$-dimensional density.

\begin{lemma}\label{lemma3}
Let $\mu$ be a positive finite Borel measure such that its logarithmic potential is differentiable in the weak
capacity sense at the point $a \in \C.$ Then
\begin{equation}\label{zerodensitymu2}
\lim_{r \rightarrow 0} \frac{\mu(B(a,r))}{r} =0.
\end{equation}
\end{lemma}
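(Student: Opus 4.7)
The plan is to test $\mu$ against a smooth bump supported in $B(a,r)$ and reduce the statement to bounding the Lebesgue integral of $|Du(z)|=|u(z)-u(a)-\langle A,z-a\rangle|$ over $B(a,r)$, converting the weak capacitary hypothesis into that $L^1$ estimate via the planar isocapacitary inequality.

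Assume $a=0$ and fix $\phi\in C_c^{\infty}(B(0,1))$ with $\phi\equiv 1$ on $B(0,1/2)$; set $\phi_r(z)=\phi(z/r)$, so that $\|\Delta\phi_r\|_{\infty}\le C/r^2$. Since $-\Delta u=2\pi\mu$ distributionally and the affine function $u(0)+\langle A,z\rangle$ has zero Laplacian (so $\int(\Delta\phi_r)(u(0)+\langle A,z\rangle)\,dz=0$), integration by parts yields
\begin{equation*}
\mu(B(0,r/2))\le\int\phi_r\,d\mu=-\frac{1}{2\pi}\int(\Delta\phi_r)\,Du\,dz\le\frac{C}{r^2}\int_{B(0,r)}|Du|\,dz.
\end{equation*}
It therefore suffices to prove $\int_{B(0,r)}|Du|\,dz=O(r^3\varepsilon(r))$, where $\varepsilon(r)\to 0$ quantifies the weak capacitary differentiability.

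To do this, I would use the layer cake $\int_{B(0,r)}|Du|\,dz=\int_0^\infty|E_t|\,dt$ with $E_t=\{|Du|>t\}\cap B(0,r)$; the hypothesis gives $\operatorname{Cap}(E_t)\le\delta(r)/t$ where $\delta(r)=r\varepsilon(r)/\log(1/r)$. The key auxiliary fact is the planar isocapacitary inequality
\begin{equation*}
|E|\le C\exp(-2/\operatorname{Cap}(E)),\qquad E\subset B(0,1/4),
\end{equation*}
obtained by inserting the test measure $\chi_E/|E|$ in \eqref{capenergy} and applying Riesz rearrangement to the positive kernel $\log(1/|z-w|)$ (positive because $\operatorname{diam}(E)\le 1/2$) to reduce to the case when $E$ is a disk. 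Combined with $|E_t|\le\pi r^2$, the two bounds coincide at $t_0\simeq r\varepsilon(r)$, and splitting the integral at $t_0$ gives
\begin{equation*}
\int_0^{t_0}|E_t|\,dt\le\pi r^2\,t_0\le Cr^3\varepsilon(r),\qquad\int_{t_0}^{\infty}|E_t|\,dt\le\int_{t_0}^{\infty}Ce^{-2t/\delta(r)}\,dt\le\frac{Cr^3\varepsilon(r)}{\log(1/r)}.
\end{equation*}
Substituting $\int_{B(0,r)}|Du|\,dz\le Cr^3\varepsilon(r)$ into the earlier bound yields $\mu(B(0,r/2))/(r/2)\le C\varepsilon(r)\to 0$, which is \eqref{zerodensitymu2}.

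The main obstacle is the sharp exponential form of the isocapacitary bound. Only a linear relation of the shape $|E|\le Cr^2\log(1/r)\operatorname{Cap}(E)$ would be available without rearrangement, and combined with the $1/t$ decay of $\operatorname{Cap}(E_t)$ it leaves a divergent $\int dt/t$ in the tail of the layer cake. The exponential improvement, sharp on balls (indeed $|B(0,\rho)|=\pi\rho^2=\pi e^{-2/\operatorname{Cap}(B(0,\rho))}$), is what converts the weak-type capacitary bound into an integrable Lebesgue layer cake.
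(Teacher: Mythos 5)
Your proposal is correct. The key identity $\mu(B(0,r/2))\le Cr^{-2}\int_{B(0,r)}|Du|$, the layer--cake decomposition, the weak capacitary bound $\operatorname{Cap}(E_t)\le \delta(r)/t$ with $\delta(r)=r\varepsilon(r)/\log(1/r)$, the exponential isocapacitary inequality, and the split at $t_0\simeq r\varepsilon(r)$ all check out, and the final bound $\mu(B(0,r/2))\le Cr\varepsilon(r)$ gives the lemma. Your route is the same in spirit as the argument the paper merely sketches (carrying over the necessary condition of Theorem~\ref{teo1} to $d=2$), but the implementation differs in two ways: the paper works on the boundary circle, extracting $\mu(B(a,r/2))$ from the Poisson--Green formula and running the layer cake against arclength on $\partial B(a,r)$ with the bound $c\,\sigma(E)\le\exp(-1/\operatorname{Cap}(E))$, whereas you work with area on the solid ball, replacing the Green function by a smooth bump and using $|E|\le C\exp(-2/\operatorname{Cap}(E))$. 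The paper's fully written-out proof is yet a third argument that avoids any isocapacitary inequality: from the Green formula one finds, on each circle $|p-a|=\rho$ with $r/2<\rho<r$, a point where $|Du(p)|\gtrsim\mu(B(a,r/4))$, and since the radial projection $p\mapsto|p-a|$ is $1$-Lipschitz and Lipschitz maps do not increase capacity, the superlevel set at height $c\,\mu(B(a,r/4))$ has capacity at least $\operatorname{Cap}([r/2,r])\simeq\operatorname{Cap}(B(a,r))$ --- a trick specific to the logarithmic capacity, for which a segment of length $r/2$ has capacity comparable to the full disc. Your version buys uniformity with the higher-dimensional proof at the price of needing the sharp exponential area--capacity comparison, which you correctly identify as the crux and which is available from \cite[Corollary 5.1.14]{AH} or by symmetrization as you indicate.
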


Then the logarithmic potential of the arc length measure on $S=\{z \in \C : |z|=1 \}$ is not differentiable
in the
weak capacity sense at any point of $S$. In the same vein, the logarithmic potential of the length measure
on the corner quarters Cantor set $K_0$ is not differentiable in the
weak capacity sense at any point of $K_0$.

\begin{proof}[Proof of Lemma \ref{lemma3}]
The argument presented for the necessary condition in Theorem \ref{teo1} works perfectly well in dimension
$d=2$. Indeed, one can replace \eqref{sigmacap} by
$$
c\,\sigma (E)\le \exp\left(-\frac{1}{\operatorname{Cap}(E)}\right),\quad E\subset \partial B(a,r),
$$
where $c$ stands for a small positive constant, and argue similarly (\cite[Corollary 5.1.14]{AH}).  There is, however,
an alternative argument which goes as follows. Using the notation introduced in the proof of the necessary condition in
Theorem \ref{teo1} and recalling that
the Green function of the disc of center $a$ and radius $r$ is $\log (r/|z-a|)$ one gets
$$
c \,\mu B\left(a,\frac{r}{2}\right)\le \frac{1}{2\pi r} \int_{\partial B(a,r)} |D u(x)|\,d\sigma(x).
$$
See \eqref{sigmagreen} and \eqref{Du} .
Then, for at least one point $p=p(r) \in \partial B(a,r)$, we have, for a smaller constant $c$,
$$
c\,\mu  B\left(a,\frac{r}{2}\right)\le |D u(p)|.
$$
We claim that
\begin{equation}\label{capgran}
\operatorname{Cap}\left \{x\in B(a,r): |D u(x)|> c\,\mu B\left(a,\frac{r}{4}\right)\right\}\ge
c\,\operatorname{Cap} (B(a,r)).
\end{equation}
Let us finish the argument assuming \eqref{capgran}. Taking $t=c\,\mu B (a,\frac{r}{4})$ we get
\begin{equation*}
\begin{split}
\varepsilon (r)&\ge \frac{c\,\mu B\left(a,\frac{r}{4}\right)\operatorname{Cap}\left\{x\in B(a,r): |D u(x)|>c\,\mu B\left(a,\frac{r}{4}\right)\right \}}{r\operatorname{Cap}B(a,r)}\\*[7pt]
&\ge c\,\frac{\mu B\left(a,\frac{r}{4}\right)} {r},
\end{split}
\end{equation*}
which gives \eqref{zerodensitymu2}.

To show the claim take $\rho$, $\frac{r}{2}<\rho<r$. Then there exists $p=p(\rho)$ with $|p-a|=\rho$ and
$$
|D u(p)|>c\,\mu B\left(a,\frac{\rho}{2}\right)\ge c\,\mu B\left(a,\frac{r}{4}\right).
$$
The mapping
$$
p\longrightarrow \phi(p)=|p-a|
$$
is Lipschitz with constant~$1$ and
$$
\phi \left\{x\in B(a,r):|D u(x)|>c\, \mu B\left(a,\frac{r}{4}\right)\right\} \supseteq \left[\frac{r}{2},r\right].
$$
Since Lipschitz mappings with constant~$1$ do not increase capacity we conclude that
$$
\operatorname{Cap}\left\{x\in B(a,r):|D u(x)|>c\,\mu \left(a,\frac{r}{4}\right)\right\}\ge
\operatorname{Cap}\left(\left[\frac{r}{2},r\right]\right) \simeq c\,\operatorname{Cap} B(a,r). \rlap{\hspace*{.56cm}\qed}
$$
\renewcommand{\qedsymbol}{}
\end{proof}

We do not know if the existence of principal values is a necessary condition for differentiability in the
capacity sense in dimension $d=2$. We can prove, however, the following.

\begin{lemma}\label{lemma4}
Let $\mu$ be a positive finite Borel measure such that its logarithmic potential is differentiable in the weak
capacity sense at the point $a \in \C.$ Assume also that one of the following two conditions is satisfied
\begin{enumerate}
\item[(i)]\begin{equation*}\label{uniformzerodensity}
\mu(B(z,r)) \le C\, \eta(r)\,r,  \quad z \in \C, \quad 0<r,
\end{equation*}
with $\eta(r) \rightarrow 0$ as $r \rightarrow 0$.
\item[(ii)] \begin{equation*}\label{densityfi}
\lim_{r \rightarrow 0} \frac{\mu(B(a,r))}{\varphi (r)} =0.
\end{equation*}
 \end{enumerate}
Then the principal value
\begin{equation}\label{pvmu3}
\pv \int \frac{a-w}{|a-w|^2} \,d\mu(w)
\end{equation}
exists.
\end{lemma}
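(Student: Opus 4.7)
The strategy is to adapt the cone argument used in subsection 3.2 to prove the necessary condition of Theorem \ref{teo1}. Assume $a=0$ for notational convenience and let $A$ be the vector furnished by differentiability of $u$ in the weak capacity sense. Set $R_{\varepsilon}=\int_{|w|>\varepsilon}\frac{w}{|w|^{2}}\,d\mu(w)$; the target is $R_{\varepsilon}\to A$ as $\varepsilon\to 0$. For $z\in B(0,r)\setminus\{0\}$ the triangle inequality gives
\[
|\langle R_{2|z|}-A,\,z\rangle|\le Du(z)+Eu(z),
\]
where $Du(z)=|u(z)-u(0)-\langle A,z\rangle|$ and $Eu(z)=|u(z)-u(0)-\langle R_{2|z|},z\rangle|$. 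The normalized weak capacity norm of $Du$ over $B(0,r)$ tends to $0$ by hypothesis, so the main task is to prove the same for $Eu$ under either condition (i) or (ii). Granting this, set $\varepsilon(r)=\sup_{t>0}t\,\Capa\{z\in B(0,r):|\langle R_{2|z|}-A,z\rangle|>t\}/(r\,\Capa(B(0,r)))$; the cone argument of subsection 3.2 then applies to the plane since the set $K_r$ of \eqref{kar} satisfies $\Capa(K_r)\simeq\Capa(B(0,r))=1/\log(1/r)$ by the Wiener scaling identity $\Capa(\lambda K_{1})=(\log(1/\lambda)+\Capa(K_{1})^{-1})^{-1}$ applied to a fixed reference arc $K_{1}\subset B(0,1/2)$. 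Since $|z|=r/2$ on $K_r$ forces $R_{2|z|}=R_r$, choosing $t=|R_r-A|\,r/2^{5/2}$ yields $\varepsilon(r)\ge c\,|R_r-A|$, so $R_r\to A$.

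Under condition (ii) the required bound on $Eu$ is precisely what is shown in the proof of Lemma \ref{lemma2}: the decomposition \eqref{Eump} and the application of \eqref{wcap} to the logarithmic potential $P(\chi_{B(0,2r)}\mu)$ use only the growth hypothesis $\mu(B(0,\rho))=o(\varphi(\rho))$ and not the existence of principal values. In the critical near-field term the $\log(1/r)$ factor coming from $1/\Capa(B(0,r))$ is absorbed by the factor $1/\log(1/\rho)$ inside $\varphi(\rho)$, which is exactly what condition (ii) supplies.

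Under condition (i) this device fails because $\mu(B(0,2r))\log(1/r)/r$ need not tend to zero. I would instead prove the stronger pointwise statement $\sup_{z\in B(0,r)}Eu(z)=o(r)$, which trivially implies the required smallness of the weak capacity norm. The key is to split the integral defining $Eu(z)$ at the $z$-dependent radius $|w|=2|z|$ and to use on the near-field the \emph{rescaled} identity
\[
\log\frac{|w|}{|z-w|}=-\log\frac{3|z|}{|w|}+\log\frac{3|z|}{|z-w|},
\]
both summands of which are non-negative on $\{|w|<2|z|\}$. Integration by parts against $\mu(B(0,\rho))$ and $\mu(B(z,\rho))$ then produces only the harmless boundary term $\log(3/2)\mu(B(0,2|z|))=O(|z|\eta(|z|))$ instead of the catastrophic $\log(1/|z|)\mu(B(0,2|z|))$ that would arise from an unrescaled logarithm. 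The far-field over $|w|>2|z|$ is handled by the Taylor expansion $\log(|w|/|z-w|)-\mathrm{Re}(z/w)=O(|z|^{2}/|w|^{2})$ followed by the same integration-by-parts argument as in \eqref{Bmp}, producing a bound $C|z|\bigl(\sup_{\rho<2N|z|}\eta(\rho)+1/N\bigr)$ that is $o(|z|)$ uniformly on $B(0,r)$ after sending $r\to 0$ and then $N\to\infty$.

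The main obstacle, in my view, is that the two cases require genuinely different arguments because the logarithmic kernel is sign-changing: case (ii) forces one to use the global weak-type inequality \eqref{wcap}, which costs a $\log(1/r)$ factor that can only be absorbed when $\mu$ enjoys the sharper $\varphi$-growth, whereas case (i) requires the pointwise treatment with the rescaled logarithm so that no $\log(1/r)$ factor ever appears. Once weak-capacity smallness of $Eu$ is in hand, the cone argument of subsection 3.2 carries over to the plane without essential modification, the only novelty being the logarithmic (rather than power) scaling of Wiener capacity, which nevertheless preserves $\Capa(K_r)\simeq\Capa(B(0,r))$.
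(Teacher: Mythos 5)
Your proposal is correct and takes essentially the same route as the paper: the paper's proof likewise reduces everything to the weak-capacity smallness of $Eu(z)=|u(z)-u(0)-\langle R_{2|z|},z\rangle|$, obtains that smallness from the estimates of Lemma~\ref{lemma1} under hypothesis (i) (your pointwise rescaled-logarithm argument is precisely that proof) and of Lemma~\ref{lemma2} under hypothesis (ii), and then runs the cone argument of subsection~3.2 with $\Capa(K_r)\simeq 1/\log(1/r)$.
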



Let $K$ be a Cantor set satisfying the condition
$$
\sum_{n=1}^\infty \frac{1}{(4^n \sigma_n)^2} = \infty.
$$
In the scale of the Cantor sets $K_\beta$ this is equivalent to $0 \le \beta  \le 1/2$.
Then the operator~$R$ with kernel $(z-w)/|z-w|^2$ is unbounded on $L^2(\mu)$, where $\mu$ is the canonical
measure on~$K$ (see \cite{MTV}) and the principal
value \eqref{pvmu3} does not exist for $\mu$ almost all points $a \in K$  (see \cite{T1}). Hence, by Lemma \ref{lemma4} the
logarithmic potential of $\mu$ is not differentiable in the weak capacity sense at $\mu$ almost all
points $a \in K$.

\begin{proof}[Proof of Lemma \ref{lemma4}]
The proof parallels that of the necessary condition in Theorem \ref{teo1}. If $u$ is the logarithmic
potential of $\mu$ and $a=0$, then one proves there that, setting
$$
Eu(z)= |u(z)- u(0) - \langle R_{2|z|}, z \rangle |,
$$
one has
\begin{equation*}
\lim_{r \rightarrow 0} \frac{\sup\limits_{t>0} t\operatorname{Cap}\left\{z\in B(0,r): |E u(z)|>t\right\}}
{r\operatorname{Cap}(B(0,r))}=0.
\end{equation*}
This is proven in Lemma \ref{lemma1} under the assumption~(i) and in Lemma \ref{lemma2} under the assumption~(ii). The rest of the proof is the same, except for the fact that now the set $K_r$ of \eqref{kar} satisfies $\Capa(K_r) \simeq 1/\log(1/r)$.
\end{proof}

Theorem \ref{teo3} is sharp in the scale of Hausdorff measures. This is the content of the following result.

\begin{theorem}\label{teo6}
 Let $\Phi \colon[0,\infty) \rightarrow [0,\infty)$, $\Phi(0)=0$, be a continuous (strictly)  increasing function such that
 \begin{equation}\label{limsuph}
 \limsup_{t \rightarrow 0} \frac{\Phi(2r)}{\Phi(r)}< 4,
 \end{equation}
 and
 \begin{equation*}
  M(r): = \frac{\Phi(r)}{\varphi(r)} \rightarrow \infty, \quad \text{as} \quad r \rightarrow \infty.
 \end{equation*}
Then there exists a compact set $K$ with $H^\Phi(K) >0$ and a finite Borel measure whose logarithmic potential
is not differentiable in the weak capacity sense at $H^\Phi$ almost all points of $K$.
\end{theorem}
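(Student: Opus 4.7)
The plan is to take $K$ to be the Cantor set associated with $\Phi$ via the recipe at the end of subsection 2.5, and $\mu$ its canonical measure. Concretely, define $\sigma_n = \Phi^{-1}(4^{-n})$ and $\lambda_n = \sigma_n/\sigma_{n-1}$. The hypothesis $\limsup_{r\to 0}\Phi(2r)/\Phi(r) < 4$ is precisely what is needed to ensure $\lambda_n < 1/2$ for all sufficiently large $n$, by the argument already given in subsection 2.5. Discarding finitely many initial levels if necessary, this produces a compact set $K$ carrying the canonical measure $\mu$ with $\mu(Q^n_j) = 4^{-n}$ and the global bound $\mu(B(x,r)) \le C\,\Phi(r)$.

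The positivity $H^\Phi(K) \ge \mu(K)/C > 0$ is then a direct application of the mass distribution principle. Moreover, combining the natural cover of $K$ by Cantor squares with the mass distribution lower bound, one sees that $\mu$ and $H^\Phi|_K$ are comparable as measures, so ``$\mu$-a.e.''\ and ``$H^\Phi$-a.e.''\ statements on $K$ coincide. To establish $\mu$-a.e.\ non-differentiability of $u = P(\mu)$ in the weak capacity sense, I would invoke the contrapositive of Lemma~\ref{lemma4}(i). The bound $\mu(B(x,r)) \le C\,\Phi(r) = C\,(\Phi(r)/r)\cdot r$ with $\Phi(r)/r \to 0$ verifies hypothesis (i), so at any point of weak capacity differentiability the principal value $\pv \int (a-w)/|a-w|^2\,d\mu(w)$ must exist. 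The problem thereby reduces to exhibiting $\mu$-a.e.\ non-existence of this principal value on $K$.

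This last step is the main obstacle. For the symmetric canonical Cantor measure, $\mu$-a.e.\ non-existence of principal values is governed, via the Mateu--Tolsa--Verdera and Tolsa results cited in subsection 2.5, by divergence of $\sum_n (\Phi(\sigma_n)/\sigma_n)^2$, and the hypothesis $M(r) \to \infty$ forces this divergence so long as $M$ grows fast enough (roughly $M(r) \gtrsim \log^{1/2}(1/r)$, using $\log(1/\sigma_n)\simeq n$). In the borderline regime where $\Phi$ is only marginally above $\varphi$ and the series converges, the symmetric measure will not do; in the spirit of Calder\'on's construction in \cite{C}, I would perturb $\mu$ by redistributing mass asymmetrically among the four children at a carefully chosen sparse subsequence of Cantor levels, breaking the cancellation responsible for the existence of the principal values while retaining both the growth bound $\mu(B(x,r)) \le C\,\Phi(r)$ and the comparability $\mu \simeq H^\Phi|_K$. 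Designing this asymmetric perturbation so as to simultaneously achieve all three objectives is the technical heart of the proof.
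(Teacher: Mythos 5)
Your construction of $K$ and the verification that $H^\Phi(K)>0$ (with $\mu\simeq H^\Phi|_K$) match the paper, but the core of your argument has a gap that is in fact fatal in the borderline regime you flag. When $\sum_n\bigl(\Phi(\sigma_n)/\sigma_n\bigr)^2=\sum_n (4^n\sigma_n)^{-2}<\infty$ (which happens, e.g., for $M(r)=\log\log(1/r)$), the operator $R$ is \emph{bounded} on $L^2(\mu)$ for the canonical measure $\mu$ by the Mateu--Tolsa--Verdera criterion, and $\mu$ has vanishing linear density. By subsection 2.2 this implies that the principal values $\pv\int (a-w)|a-w|^{-2}\,d\nu(w)$ exist at $\mu$-a.e.\ $a\in K$ for \emph{every} finite Borel measure $\nu$ --- not just for $\mu$ or for measures comparable to it. Consequently no asymmetric redistribution of mass, nor any other choice of measure, can produce $\mu$-a.e.\ non-existence of principal values on $K$ in that regime, so the reduction through the contrapositive of Lemma~\ref{lemma4} cannot be completed. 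Your proposed ``technical heart'' is not merely unfinished; the route is closed.

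The paper avoids this by abandoning principal values altogether. It builds an auxiliary measure $\nu=\sum_n \frac{1}{n^2 4^{N_n}}\sum_j\bigl(\delta_{p^n_j}-\tfrac{d\sigma_{p^n_j,\rho_n}}{2\pi\rho_n}\bigr)$, with $\rho_n=\sqrt{2}\,\sigma_{N_n+n}$ and the points $p^n_j$ sprinkled in deep-generation squares, whose logarithmic potential is a superposition of the compactly supported bumps $L$ from \eqref{L}. Because $\nu$ has atoms it violates the growth hypotheses of Lemmas~\ref{lemma1}--\ref{lemma4} (so there is no contradiction with the existence of its principal values $\mu$-a.e.), and non-differentiability at $\mu$-a.e.\ $a\in K$ is detected directly: at scales $r_k=\sqrt{2}\,\sigma_{N_k}$ a single spike inside $B(a,r_k)$ forces
$\sup_t t\operatorname{Cap}\{z\in B(a,r_k):|R(z)|>t\}\gtrsim (k^2 4^{N_k})^{-1}$, and dividing by $r_k\operatorname{Cap}(B(a,r_k))$ yields a lower bound $c\,M(\sigma_{N_k})/k^2$, which is made to blow up by choosing $N_k$ with $M(\sigma_{N_k})\ge k^3$. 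If you want to salvage your approach, note that it does give a correct (and simpler) proof in the restricted case $\sum_n (4^n\sigma_n)^{-2}=\infty$; but to cover the full range of $\Phi$ you need a construction of the paper's type, with a measure whose local irregularity is of ``atomic'' rather than ``cancellation'' origin.
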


This means that you cannot get any condition better than $H^\varphi(E)=0$ on the set $E$ of points
of non differentiability in the weak capacity sense for the logarithmic potential of a finite Borel measure.
In particular, there exists a  finite Borel measure whose logarithmic potential is not differentiable in
the weak capacity sense on a set of positive  $C^1$ harmonic capacity (that is,
 positive continuous analytic capacity). Hence the size of the exceptional sets may be larger in dimension $2$
 than in higher dimensions.  See subsection 2.5 for a comment on condition \eqref{limsuph}.
 
 
\begin{proof}[Proof of Theorem \ref{teo6}]
Let $K$ be the Cantor set associated with $\Phi$ and let $\mu$ be
its canonical measure (see subsection 2.5).  We aim at constructing a finite Borel
measure $\nu$ whose logarithmic potential is not differentiable in
the weak capacity sense at $\mu$ almost all points.

Given a positive integer $n$ take another large positive integer
$N_n$ to be determined later. Given a square $Q^{N_n}_j$ of generation $N_n$, $1 \le j \le 4^{N_n}$,
choose a square of generation $N_n+n$ (it is not important which one
is chosen). Denote the center of the chosen square of
generation $N_n+n$ contained in $Q^{N_n}_j$ by $p^n_j, \; 1 \le j \le 4^{N_n}$. Set
$$
E_n = \bigcup_{j=1}^{4^{N_n}} B(p^n_j, \sqrt{2}\,\sigma_{N_n+n}),
$$
$$
D_m =\bigcup_{n=m}^{\infty} E_n,
$$
and
$$
D= \bigcap_{m=1}^\infty D_m.
$$
Clearly $\mu(E_n)= 4^{N_n} 4^{-(N_n+n)}=4^{-n}$, and this is the only reason why
we have descended $n$ more generations after $N_n$. Hence
$\mu(D_m) \le \sum_{n=m}^\infty 4^{-n}$ and $\mu(D)=0.$

Take $a \in K \setminus D.$ Then $a \neq p^n_j, \; $ for all $n$ and
$j$, because $p^n_j \notin K.$ Since $a \notin D$, $a \notin D_m$
for some $m$, and so $a \notin B(p^n_j, \sqrt{2}\,\sigma_{N_n+n})$
for all $n \ge m$ and all $j.$

We proceed now to define the finite Borel measure whose logarithmic
potential is not differentiable in the weak capacity sense at all
points $a \notin D.$ First note that if $B$ is the unit disc
$B(0,1)$ we have
\begin{equation}\label{L}
L(z) : = \chi_B(z) \,\log \frac{1}{|z|} = \log \frac{1}{|z|}*
\left(\delta_0 - \frac{d \sigma}{2 \pi} \right)
\end{equation}
where $\delta_0 $ is the Dirac delta at the origin and $d \sigma$
the arc-length measure on the unit circle $\{z : |z|=1\}$. The second identity in \eqref{L} can
be shown by computing the Laplacian of $L$ and recalling that $1/(2
\pi) \log |z|$ is the fundamental solution of the Laplacian in the
plane. Translating and dilating we get
\begin{equation*}
L(\frac{1}{\rho} (z-p)) =   \log \frac{1}{|z|}* \left(\delta_{p} -
\frac{d \sigma_{p,\rho}}{2 \pi \rho} \right), \quad p \in \C, \quad
0 < \rho,
\end{equation*}
where $\delta_{p}$ is the Dirac delta at the point $p$ and $d
\sigma_{p,\rho}$ is arc-length measure on $\partial B(p,\rho).$
Define
\begin{equation*}
\nu = \sum_{n=1}^\infty \frac{1}{n^2 \,4^{N_n}} \sum_{j=1}^{4^{N_n}}
\left(\delta_{p^n_j} - \frac{d \sigma_{p^n_j,\sqrt{2}\,\,
\sigma_{N_n+n}}}{2 \pi \sqrt{2}\, \sigma_{N_n+n}} \right)
\end{equation*}
which is a finite Borel measure because $\|\nu\| \le
\sum_{n=1}^\infty 2/ n^2.$ The logarithmic potential of $\nu$ is
\begin{equation*}
u(z) = \sum_{n=1}^\infty \frac{1}{n^2 \,4^{N_n}}
\sum_{j=1}^{4^{N_n}} L(\frac{1}{\sqrt{2}\, \sigma_{N_n+n}}
(z-p^n_j)).
\end{equation*}
To simplify notation write
\begin{equation*}
S_n(z) = \frac{1}{4^{N_n}} \sum_{j=1}^{4^{N_n}}
L(\frac{1}{\sqrt{2}\, \sigma_{N_n+n}} (z-p^n_j)).
\end{equation*}
Given $a \in K \setminus D$ as before,  we have $a \notin B(p^n_j,
\sqrt{2}\,\sigma_{N_n+n})$ for all $n \ge m$ and all $j.$ Thus
$S_n(a)=0$, for all $n \ge m$ and consequently
\begin{equation*}
u(z)-u(a) = \sum_{n=1}^{m-1} \frac{1}{n^2}
\left(S_n(z)-S_n(a)\right) + \sum_{n=m}^{\infty} \frac{1}{n^2}
S_n(z).
\end{equation*}
Recall that  $a \neq p^n_j,$ for all $n$ and $j$. If $r>0 $ is
small enough, then $p^n_j \notin B(a,r)$, for $ n \le m-1$ and all
$j.$ Therefore
$$
 \sum_{n=1}^{m-1} \frac{1}{n^2}
\left(S_n(z)-S_n(a)\right)
$$
is smooth on $B(a,r).$  Consequently the differentiability
properties of $u$ at the point $a$ depend only on
$$
R(z) : = \sum_{n=m}^{\infty} \frac{1}{n^2} S_n(z).
$$
 Assume that $R$ is differentiable in the weak capacity
sense at $a$. It is a general fact that then $R$ is Lipschitz in the
weak capacity sense at the point $a$. Since $R(a)=0$ this means that
\begin{equation*}
\frac{\sup_{t > 0} t \operatorname{Cap}(\{z \in B(a,r) : |R(z)|
> t \})}{r \,\operatorname{Cap}(B(a,r))} \le C_a, \quad 0 < r < 1/4,
\end{equation*}
for some constant $C_a$ depending only on $a.$  To disprove the
preceding inequality we take radii of the form
\begin{equation*}
r=r_k= \sqrt{2}\,\sigma_{N_k},\quad k =1, 2, \dots
\end{equation*}
For each $k$ the point $a$ belongs to a square $Q^{N_k}_j$ of
generation $N_k$. Hence
$$
B(p^k_j, \sqrt{2}\,\sigma_{N_k+k}) \subset B(a,
\sqrt{2}\,\sigma_{N_k} ).
$$
 Take $k \ge m$ large enough so that $p^n_j \notin B(a,\sqrt{2}\,\sigma_{N_k})$, for $ n
\le m-1$ and all $j.$

Then
\begin{equation*}
R(z) \ge \frac{1}{k^2} S_k(z) \ge \frac{1}{k^2
4^{N_k}}\,L(\frac{1}{\sqrt{2}\, \sigma_{N_k+k}} (z-p^k_j)).
\end{equation*}

\pagebreak

\noindent
The right hand side of the inequality above is larger than $t$ if
and only if
$$|z-p^k_j| < \sqrt{2}\, \sigma_{N_k+k} \;e^{-k^2  4^{N_k}
t}.$$
Thus
\begin{equation*}\label{capR}
\begin{split}
\sup\limits_{t>0} t\operatorname{Cap}\left\{z\in B(a,\sqrt{2}\,
\sigma_{N_k}): |R(z)|>t \right\} & \ge c\, \sup\limits_{t>0}
\frac{t}{k^2 4^{N_k} t + \log \frac{1}{\sqrt{2}\, \sigma_{N_k+k}}}\\
& = c\,\frac{1}{k^2 4^{N_k}}
\end{split}
\end{equation*}
and
\begin{equation*}\label{Rlipcap}
\begin{split}
\sup\limits_{1/4 > r>0} \frac{\sup\limits_{t>0}
t\operatorname{Cap}\left\{z\in B(a,r): |R(z)|>t \right\}}{r\,
\Capa(B(a,r))} & \ge c\,\frac{1}{k^2 4^{N_k} \sigma_{N_k}} \log
\frac{1}{\sigma_{N_k}}\\*[7pt]
& = c\,\frac{M(\sigma_{N_k})}{k^2}.
\end{split}
\end{equation*}
Given $k$ take now $N_k$ so that $M(\sigma_{N_k}) \ge k^3,$  which
is possible because $M(r)\rightarrow \infty$ as $r \rightarrow 0.$
\end{proof}

\section{Proof and sharpness of Theorem \ref{teo4}}

 The proof follows the pattern
of that of Theorem \ref{teo3}. Following the details of the argument
below should provide a clear explanation of the role of the function
$\psi$ in \eqref{psi} as a substitute for the function $\varphi$ in
Theorem \ref{teo3}.

Let $\mu$ be a finite Borel measure and $u$ its logarithmic
potential. We assume, without loss of generality, that $\mu$ is positive. Let $E$ stand for the set of points at which $u$ is not
differentiable in the capacity sense. Take a positive finite Borel measure
$m$ with compact support contained in $E$ satisfying the growth
condition $m(B(z,r) \le \psi(r), \;z \in \C, \; 0 <r.$ If we see
that $u$ is differentiable in the capacity sense at~$m$ almost all
points, then $m$ has to be identically $0$ and hence $H^\psi(E)=0$.

The Radon--Nikodym decomposition of $\mu$ with respect to $m$ has the
form  $\mu = f m+ \mu_s$, with $f \in L^1(m)$ and $\mu_s$ singular
with respect to $m$. Given a point $a$ set $\nu = (f-f(a)) m +
\mu_s$ so that $\mu = \nu + f(a)m$. At $m$ almost all points $a$ one
has
\begin{equation}\label{nug2}
 |\nu|(B(a,r)) \le \eta_a(r)\,\psi(r)
\end{equation}
where $\eta_a$ is a function, possibly depending on $a$, with
$\eta_a(r) \rightarrow 0$ as $r \rightarrow 0$. We plan to show that
the logarithmic potential of $\nu$ is differentiable in the capacity
sense at the point $a$ if \eqref{nug2} holds. This will complete the proof
because the logarithmic
potential of the measure $m$ is of class $C^1(\C).$  This is a
consequence of the fact that its gradient $- 1/2 \int
(\overline{w}-\overline{z})^{-1}\,dm(w)$ is a continuous function,
which in turn follows from the uniform growth condition $m(B(z,r)
\le \psi(r), \; z \in \C, \; 0 <r.$

\pagebreak

Let us proceed to prove that the logarithmic potential of $\nu$ is
differentiable in the capacity sense at the point $a$ if
\eqref{nug2} holds. If $ |\nu|(B(a,r)) \le C_a\,\psi(r)$ holds
for $0 < r$
with a constant $C_a$, which may  depend on $a$,  then it is easily seen that
$\int |w-a|^{-1}\,d|\nu|(w) < \infty$. Hence the principal value
$\pv (a-w)/|a-w|^2 \,d\nu(w)$ exists.
Without loss of generality we can assume $\nu$
to be a positive measure.  Assume that $a=0$ and set
 \begin{equation*}\label{Anu2}
A = \int \frac{w}{|w|^2} \,d\nu(w),
\end{equation*}
 \begin{equation*}\label{Qpla}
Qu(z) = \frac{|u(z)-u(0) - \langle A, z \rangle |}{|z|}, \quad z
\neq 0,
\end{equation*}
 \begin{equation*}\label{Repnu2}
R_\varepsilon = \int_{|w|> \varepsilon} \frac{w}{|w|^2} \,d\nu(w), \quad
\varepsilon >0,
\end{equation*}
 \begin{equation*}\label{Eupla}
Eu(z) = \frac{|u(z)-u(0) - \langle R_{2|z|} , z \rangle |}{|z|},
\quad z \neq 0.
\end{equation*}
 Then
\begin{equation*}
\begin{split}
\sup_{t>0} \frac{t\operatorname{Cap}\left\{ z\in B(0,r): Qu(z)>t\right\}}
{\operatorname{Cap}(B(0,r))}
&\le \sup_{t>0}\frac{t\operatorname{Cap}\left\{z\in B(0,r): Eu(z)>t\right\}}
{\operatorname{Cap}(B(0,r))}\\*[3pt]
&\quad +\sup_{|z|<r}|R-R_{2|z|}|.
\end{split}
\end{equation*}
 The second term in the right hand side above tends to $0$ as $r \rightarrow 0$.
 To estimate the first one notes that
 \begin{equation*}
\begin{split}
Eu(z)&\le C|z| \int_{|w|>2|z|} \frac{d\nu(w)}{|w|^2}+\frac{1}{|z|} \int_{|w|<2|z|}\log
\frac{1}{|w-z|}\,d\nu (w)\\*[3pt]
&\quad +\frac{1}{|z|}\int_{|w|<2|z|} \log\frac{1}{|w|}\,d\nu(w)\\*[3pt]
&\equiv A(z)+B(z)+C(z).
\end{split}
\end{equation*}
Set $\eta = \eta_a$  for the sake of notational simplicity.
Integrating by parts we get for all positive integers $N$
\begin{equation*}
\begin{split}
A(z)&\le C|z|\left[ \frac{\nu B(0,\rho)}{\rho}\right]^{1/4}_{2|z|} + C|z|
\int^{1/4}_{2|z|}\frac{\nu B (0,\rho)}{\rho^3}\,d\rho\\*[3pt]
&\le C|z| \|\nu\|+C|z|\int^{2|z|N}_{2|z|} \frac{\eta(\rho)}{\rho^2\log^2(\rho)}\,d\rho\\*[3pt]
&\quad +C|z|\int^{1/4}_{2|z|N} \frac{\eta(\rho)}{\rho^2\log^2(\rho)}\,d\rho\\*[3pt]
&\le C|z| \|\nu\| + C\frac{\|\eta\|_{\infty}}{\log^2 (2|z|N)}+ C\,\frac{\|\eta\|_{\infty}}{N}.
\end{split}
\end{equation*}
Since $N$ is arbitrary we see that
$$
\lim_{r \rightarrow 0}\, \sup\limits_{|z|< r} \,A(z) = 0.
$$
The term $C(z)$ is estimated similarly via an integration by parts.
We obtain
\begin{equation*}
\begin{split}
C(z)&=\frac{1}{|z|} \left[\log (\frac{1}{\rho}) \nu B (0,\rho)\right]_0^{2|z|}+\frac{1}{|z|}\int_0^{2|z|}\frac{\nu B(0,\rho)}{\rho}\,d\rho\\*[7pt]
&\le \frac{\|\nu\|_{\infty}}{\log \frac{1}{2|z|}}+\frac{1}{|z|} \int_0^{2|z|} \frac{\eta(\rho)}{\log^2(\rho)}\,d\rho\\*[7pt]
&\le \frac{\|\eta\|_{\infty}}{\log\frac{1}{2|z|}}+\frac{\|\eta\|_{\infty}}{\log^2(2|z|)},
\end{split}
\end{equation*}
and so
$$
\lim_{r \rightarrow 0}\, \sup\limits_{|z|< r} \,C(z) = 0.
$$
For the term $B(z)$ we perform a capacity estimate. First, note that
\begin{equation*}
B(z)\le 2 \int_{|w|<2|z|} \log \frac{1}{|w-z|} \frac{d\nu(w)}{|w|},
\end{equation*}
and
\begin{equation*}
\begin{split}
 \int_{|w|<2|z|} \frac{d\nu(w)}{|w|}&=\left[\frac{\nu B(0,\rho)}{\rho}\right]_0^{2|z|} +
\int_{0}^{2|z|} \frac{\nu B(0,\rho)}{\rho^{2}}\,d\rho\\*[7pt]
&\le \frac{\|\nu\|_{\infty}}{\log^2(2|z|)} +\int_0^{2|z|} \frac{\eta(\rho)}{\log^2(\rho)} \frac{d\rho}{\rho}\\*[7pt]
&\le \frac{\|\eta\|_{\infty}}{\log^2(2|z|)}+\frac{1}{\log\frac{1}{2|z|}}\sup_{\rho<2|z|} \eta(\rho).
\end{split}
\end{equation*}
Therefore
\begin{equation*}
\begin{split}
\sup_{t>0} \frac{t\operatorname{Cap}\left\{ z\in B(0,r): B(z)>t\right\}}
{\operatorname{Cap}(B(0,r))}
&\le \frac{1}{\operatorname{Cap}(B(0,r))}
\left(\frac{\|\eta\|_{\infty}}{\log^2(2r)}+\frac{1}{\log \frac{1}{2r}}
\sup_{t<2r}\eta(t)\right)\\*[7pt]
&\le C\left\{ \frac{\|\eta\|_{\infty}}{\log\frac{1}{2r}} +\sup_{t<2r}\eta(t)\right\},
\end{split}
\end{equation*}
which tends to $0$ with $r.$  It is worth remarking that only in the
last inequality we used that $\eta(r)$ tends to $0$ with $r$.\qed

\smallskip
Theorem \ref{teo4} is sharp in the scale of Hausdorff
measures, as the next result shows.
\begin{theorem}\label{teo7}
 Let $\Psi \colon[0,\infty) \rightarrow [0,\infty)$, $\Psi(0)=0$, be a continuous (strictly)  increasing function such that
 \begin{equation}\label{limsuphpsi}
 \limsup_{t \rightarrow 0} \frac{\Psi(2r)}{\Psi(r)}< 4,
 \end{equation}
 and
 \begin{equation*}\label{Mpsi}
  M(r): = \frac{\Psi(r)}{\psi(r)} \rightarrow \infty, \quad \text{as} \quad r \rightarrow \infty.
 \end{equation*}
Then there exists a compact set $K$ with $H^\Psi(K) >0$ and a finite
Borel measure whose logarithmic potential is not differentiable in
the capacity sense at $H^\Psi$ almost all points of~$K$.
\end{theorem}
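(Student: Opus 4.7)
The plan is to adapt the sharpness construction of Theorem \ref{teo6} to the capacity-sense setting, building the example on a Cantor set associated with $\Psi$ and defining the measure $\nu$ by logarithmic spikes placed at selected deep sub-squares, with the ratio $M(r):=\Psi(r)/\psi(r)$ playing the role that $\Psi/\varphi$ did in Theorem \ref{teo6}. First, I would invoke the construction of subsection 2.5 for $\Psi$: condition \eqref{limsuphpsi} is precisely what ensures that the scaling parameters $\lambda_n=\sigma_n/\sigma_{n-1}$, with $\sigma_n$ defined by $\Psi(\sigma_n)=4^{-n}$, satisfy $\lambda_n<1/2$ for large $n$, so the Cantor set $K$ and its canonical measure $\mu$ are well defined, $\mu(B(z,r))\le C\Psi(r)$, and $H^\Psi(K)>0$.

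Next, I would copy the definition of $\nu$ from Theorem \ref{teo6}: for each $n$ fix a large integer $N_n$ (to be pinned down last) and, inside every $Q^{N_n}_j$, a sub-square of generation $N_n+n$ with centre $p^n_j$, and set
\[
\nu=\sum_{n=1}^\infty \frac{1}{n^2\,4^{N_n}}\sum_{j=1}^{4^{N_n}}\Bigl(\delta_{p^n_j}-\frac{d\sigma_{p^n_j,\sqrt{2}\sigma_{N_n+n}}}{2\pi\sqrt{2}\sigma_{N_n+n}}\Bigr).
\]
The set $D=\bigcap_m\bigcup_{n\ge m}E_n$, with $E_n=\bigcup_j B(p^n_j,\sqrt{2}\sigma_{N_n+n})$, satisfies $\mu(D)=0$ because $\mu(E_n)=4^{-n}$; for $a\in K\setminus D$ the first $m-1$ summands in $u=P(\nu)$ contribute a function smooth on a neighbourhood of $a$, so the analysis reduces to the tail series $R(z)=\sum_{n\ge m}(1/n^2)S_n(z)$, with $S_n$ as in the proof of Theorem \ref{teo6}. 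Taking test radii $r_k=\sqrt{2}\sigma_{N_k}$ and using the pointwise bound $R(z)\ge (1/(k^2\,4^{N_k}))\,L((z-p^k_j)/(\sqrt{2}\sigma_{N_k+k}))$ on the $k$-th spike, I would aim at a lower bound of the form
\[
\sup_{t>0}\frac{t\,\operatorname{Cap}\bigl\{z\in B(a,r_k)\colon |R(z)|/|z-a|>t\bigr\}}{\operatorname{Cap}(B(a,r_k))}\;\ge\;c\,\frac{M(\sigma_{N_k})}{k^2}.
\]
Choosing $N_k$ inductively so that $M(\sigma_{N_k})\ge k^3$ (possible since $M\to\infty$) would then force the ratio to diverge along $r_k\to 0$, giving non-capacity-differentiability at every $a\in K\setminus D$.

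The main technical obstacle is producing the factor $\log(1/\sigma_{N_k})$ that separates $M=\Psi/\psi$ from the $\Psi/\varphi$ scaling that drives Theorem \ref{teo6}. The straightforward transposition of that proof, which uses only the inclusion $\{|R|>tr_k\}\subseteq\{|R|/|z-a|>t\}$, yields merely a bound of order $\Psi(\sigma_{N_k})\log(1/\sigma_{N_k})/(k^2\sigma_{N_k})=(\Psi/\varphi)(\sigma_{N_k})/k^2$, which is too weak because $\Psi/\varphi$ need not diverge when $\Psi/\psi$ does. The extra logarithm has to be extracted from the structural difference between the two quotients, namely that in the cap-sense one divides by $\operatorname{Cap}(B(a,r_k))\sim 1/\log(1/\sigma_{N_k})$ instead of by $r_k\operatorname{Cap}(B(a,r_k))\sim\sigma_{N_k}/\log(1/\sigma_{N_k})$; concretely, this should emerge from a more careful choice of the sub-square containing $p^k_j$ so that $|p^k_j-a|$ is of order $\sigma_{N_k+k}$ rather than $\sigma_{N_k}$ for the typical $a\in K\setminus D$, combined with a refined capacity estimate for the level sets of $|R(z)|/|z-a|$ that genuinely exploits this smaller denominator, in the spirit of the capacity calculations underlying Theorem \ref{teo4}.
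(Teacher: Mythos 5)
There is a genuine gap. You correctly diagnose that the straightforward transposition of the Theorem \ref{teo6} argument only produces a lower bound of order $(\Psi/\varphi)(\sigma_{N_k})/k^2$ (the paper makes exactly this computation in a digression and notes that it ``does not conclude''), but the remedy you propose --- choosing the sub-square carrying $p^k_j$ so that $|p^k_j-a|\sim\sigma_{N_k+k}$ for the typical $a\in K\setminus D$ --- cannot work. With one spike per generation-$N_k$ square, the set of points $a\in K$ lying within distance $C\sigma_{N_k+k}$ of some spike has $\mu$-measure about $4^{N_k}\Psi(\sigma_{N_k+k})=4^{-k}$, so by Borel--Cantelli $\mu$-almost every $a$ is \emph{eventually far} (at distance $\gtrsim\sigma_{N_k}$, not $\sigma_{N_k+k}$) from every spike; indeed this is precisely why the exceptional set $D$ is $\mu$-null. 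You cannot simultaneously discard the points near the spikes (needed to isolate the tail $R$) and insist that the typical point be near a spike.

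The paper's actual fix is combinatorial and multi-scale, and it is the essential new content of the proof. First, the spikes are placed one in \emph{each} square of generation $2N_n$ (so $4^{2N_n}$ of them per level, weighted by $1/(n^2 4^{2N_n})$), not one per square of generation $N_n$. Then, for $a$ in a square $Q^{N_k}$ of generation $N_k$, the $4^{N_k}$ spikes inside $Q^{N_k}$ are classified by their distance to $a$: one constructs disjoint squares $Q^{N_k+l}$, $l=1,\dots,N_k$, avoiding $a$, each containing $4^{N_k-l}$ spikes at distance $\lesssim\sigma_{N_k+l-1}$ from $a$. The level set $\{|R(z)|/|z-a|>t\}$ then contains a union of many tiny discs $B_{lj}$, and a separate lemma (Lemma \ref{lemma8}: for disjoint discs of radius at most $\delta^N$ and mutual distance at least $\delta$, the Wiener capacity of the union is at least half the sum of the capacities) lets one \emph{add} their capacities for $t$ above a threshold $T_k$. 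The resulting sum telescopes to $\frac{c}{k^2}\sum_{l=1}^{N_k} M(\sigma_{N_k+l-1})/\log^2(1/\sigma_{N_k+l-1})$, and after multiplying by $\Capa(B(a,r_k))^{-1}\simeq\log(1/\sigma_{N_k})\simeq N_k$ and using $\sum_{l=1}^{N_k}(N_k+l)^{-2}\simeq N_k^{-1}$ one obtains the bound $\frac{c}{k^2}\inf_{N\ge N_k}M(\sigma_N)$, which is what allows the choice $\inf_{N\ge N_k}M(\sigma_N)\ge k^3$. (A final case distinction handles the situation $\Psi(\sigma_n)/\sigma_n\not\to 0$, where the canonical measure itself already has non-vanishing linear density and Lemma \ref{lemma3}-type reasoning applies directly.) None of these ingredients --- the doubled generation, the summation over scales, or the capacity semi-additivity lemma for separated discs --- appears in your proposal, and without them the lower bound you aim for is out of reach.
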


Therefore there is no condition better than $H^\psi(E)=0$ on the set
$E$ of points of non differentiability in the capacity sense for the
logarithmic potential of a finite Borel measure. In particular,
there exists a finite Borel measure whose logarithmic potential is
not differentiable in the capacity sense on a set of positive
$H^\varphi$ measure. Thus the two notions of differentiability in the capacity sense are different in dimension $2$.
Also note that the size of the exceptional sets is
definitely larger in dimension $2$
 than in higher dimensions. See subsection 2.5 for a discussion of condition \eqref{limsuphpsi}.

\begin{proof}[Proof of Theorem \ref{teo7}]
The proof is similar to that of Theorem \ref{teo6}, although a
difficulty appears that requires a new idea. The proof is written to
make it accessible to a reader who has not gone through the proof of
Theorem \ref{teo6}.

 Let $K$ be the Cantor set associated with
$\Psi$ and let $\mu$ be its canonical measure. We aim at
constructing a finite Borel measure $\nu$ whose logarithmic
potential is not differentiable in the capacity sense at $\mu$
almost all points.

Given a positive integer $n$ take another large positive integer
$N_n$ to be determined later. Given a square $Q^{N_n}_j$ of
generation $N_n$ let $Q^{2N_n}_j, \; 1 \le j \le 4^{N_n}, $ the
squares of generation $2N_n$ contained in $Q^{N_n}_j$. Choose a
square of generation $2 N_n +n$ inside $Q^{2N_n}_j$ and let $p^n_j$
be its center. It is not important what square is chosen; what matters
is that it is a square of generation $2 N_n+n$. Descending to generation $2N_n$ instead of $N_n$
is a first difference with respect to the proof of Theorem \ref{teo6}. It will become apparent later why we need to do so.  Set
$$
E_n = \bigcup_{j=1}^{4^{2 N_n}} B(p^n_j, \sqrt{2}\,\sigma_{2
N_n+n}),
$$
$$
D_m =\bigcup_{n=m}^{\infty} E_n,
$$
and
$$
D= \bigcap_{m=1}^\infty D_m.
$$
Clearly $\mu(E_n)= 4^{2 N_n} 4^{-(2 N_n+n)}=4^{-n}$, and this is the
only reason why we have descended $n$ more generations after $2
N_n$. Hence $\mu(D_m) \le \sum_{n=m}^\infty 4^{-n}$ and $\mu(D)=0.$

Take $a \in K \setminus D.$ Then $a \neq p^n_j, \; $ for all $n$ and
$j$, because $p^n_j \notin K.$ Since $a \notin D$, $a \notin D_m$
for some $m$, and so $a \notin B(p^n_j, \sqrt{2}\,\sigma_{2 N_n+n})$
for all $n \ge m$ and all $j.$

We proceed now to define the finite Borel measure whose logarithmic
potential is not differentiable in the capacity sense at all points
$a \in K \setminus D.$ Set
\begin{equation*}
\nu = \sum_{n=1}^\infty \frac{1}{n^2 \,4^{2 N_n}} \sum_{j=1}^{4^{2
N_n}} \left(\delta_{p^n_j} - \frac{d \sigma_{p^n_j,\sqrt{2}\,\,
\sigma_{2 N_n+n}}}{2 \pi \sqrt{2}\, \sigma_{2 N_n+n}} \right),
\end{equation*}
where $\delta_p$ is the Dirac delta at the point $p$ and
$d\sigma_{p,\rho}$ is the arc length measure on $\partial
B(p,\rho).$ Since $\|\nu\| \le 2 \sum_{n=1}^\infty 1/ n^2, \,$ $\nu$
is a finite Borel measure. The logarithmic potential of $\nu$ is
\begin{equation*}
u(z) = \sum_{n=1}^\infty \frac{1}{n^2 \,4^{2 N_n}} \sum_{j=1}^{4^{2
N_n}} L(\frac{1}{\sqrt{2}\, \sigma_{2 N_n+n}} (z-p^n_j)),
\end{equation*}
where $L$ is the function in \eqref{L}.
To simplify notation write
\begin{equation*}
S_n(z) = \frac{1}{4^{2 N_n}} \sum_{j=1}^{4^{2 N_n}}
L\left(\frac{1}{\sqrt{2}\, \sigma_{2 N_n+n}} (z-p^n_j)\right).
\end{equation*}
Given $a \in K \setminus D$ as before,  we have $a \notin B(p^n_j,
\sqrt{2}\,\sigma_{2 N_n+n})$ for all $n \ge m$ and all $j.$ Thus
$S_n(a)=0$, for all $n \ge m$ and consequently
\begin{equation*}
u(z)-u(a) = \sum_{n=1}^{m-1} \frac{1}{n^2}
\left(S_n(z)-S_n(a)\right) + \sum_{n=m}^{\infty} \frac{1}{n^2}
S_n(z).
\end{equation*}
Recall that  $a \neq p^n_j, \; $ for all $n$ and $j$. If $r>0 $ is
small enough, then $p^n_j \notin B(a,r)$, for $ n \le m-1$ and all
$j.$ Therefore
$$
 \sum_{n=1}^{m-1} \frac{1}{n^2}
\left(S_n(z)-S_n(a)\right)
$$
is smooth on $B(a,r).$  Consequently the differentiability
properties of $u$ at the point $a$ depend only on
$$
R(z) : = \sum_{n=m}^{\infty} \frac{1}{n^2} S_n(z).
$$

 Assume that $R$ is differentiable in the capacity
sense at $a$. Then $R$ is Lipschitz in the
capacity sense at the point $a$, as a simple argument shows. Since $R(a)=0$ this means that
\begin{equation*}
\frac{\sup_{t > 0} t \operatorname{Cap}(\{z \in B(a,r) : \frac{|R(z)|}{|z-a|}
> t \})}{\operatorname{Cap}(B(a,r))} \le C_a, \quad 0 < r < 1/4,
\end{equation*}
for some constant $C_a$ depending only on $a.$  To disprove the
preceding inequality we take radii of the form
\begin{equation*}
r=r_k= \sqrt{2}\,\sigma_{N_k},\quad k =1, 2, \dots
\end{equation*}
with $k \ge m$ large enough so that $p^n_j \notin B(a,\sqrt{2}\,\sigma_{N_k})$, for $ n
\le m-1$ and all $j.$
For each such $k$ the point $a$ belongs to a square $Q^{N_k}$ of
generation $N_k$, which contains $4^{N_k}$ points~$p^k_j$.
Now we classify the $p_j^k \in Q^{N_k}$ according to their distance to $a.$
Denote by  $Q^{N_k+1}$ a square of generation $N_k+1$ contained in $
Q^{N_k}$ and not containing $a.$  The square $Q^{N_k+1}$ contains
$4^{N_k-1}$ points $p_j^k.$ If $p_j^k \in Q^{N_k+1}$ and $z \in
B(p_j^k, \sqrt{2}\, \sigma_{2 N_k+k})$, then $|z-a| < 2
\sigma_{N_k}.$ We construct inductively pairwise disjoint squares
$Q^{N_k+l}, \, l=1,2,...,N_k$, of generation $N_k+l$, contained in
$Q^{N_k}$, containing $4^{N_k-l}$ points $p_j^k$, and with the
property that if $p_j^k \in Q^{N_k+l}$ and $z \in B(p_j^k,
\sqrt{2}\, \sigma_{2 N_k+k})$, then $|z-a| < 2 \sigma_{N_k+l-1}.$
Since
\begin{equation*}
 B(p^k_j, \sqrt{2}\,\sigma_{2 N_k+k}) \subset B(a,
\sqrt{2}\,\sigma_{N_k} )=B(a,r), \quad p^k_j \in Q^{N_k},
\end{equation*}
and
\begin{equation*}
R(z) \ge \frac{1}{k^2} S_k(z) \ge \frac{1}{k^2 4^{2
N_k}}\,L(\frac{1}{\sqrt{2}\, \sigma_{2 N_k+k}} (z-p^k_j)),
\end{equation*}
we get
\begin{equation*}
\begin{split}
&\left\{z  \in B(a,r) :  \frac{R(z)}{|z- a|}> t \right\}\\
&\qquad\supset
\bigcup_{l=1}^{N_k} \,\bigcup_{p_j^k \in Q^{N_k+l}} \left\{ z \in
B(p^k_j, \sqrt{2}\, \sigma_{2 N_k+k}) : \log \frac{\sqrt{2}\,
\sigma_{2 N_k+k}}{|z-p_j^k|} > t\, k^2 \,4^{2N_k} \,2\,
\sigma_{N_k+l-1} \right\} \\
&\qquad
= \bigcup_{l=1}^{N_k} \,\bigcup_{p_j^k \in Q^{N_k+l}} B_{lj},
\end{split}
\end{equation*}
where
\begin{equation*}\label{unio2}
B_{lj} =  B \left(p_j^k, \sqrt{2}\, \sigma_{2 N_k+k} \; e^{-t\, 2\,
k^2\, 4^{2N_k} \,\sigma_{N_k+l-1}} \right), \quad p^k_j \in Q^{N_k+l}.
\end{equation*}
 Lemma \ref{lemma8} below yields that if $t > T_k$ for a large positive
number $T_k$, then the balls $B_{lj}$ are disjoint and
\begin{equation}\label{capunio}
\Capa \left(\bigcup_{l=1}^{N_k} \,\bigcup_{p_j^k \in Q^{N_k+l}}
B_{lj} \right) \ge \frac{1}{2}\,\sum_{l=1}^{N_k} \sum_{p_j^k \in
Q^{N_k+l}} \Capa \left(B_{lj}\right).
\end{equation}
The proof of Lemma \ref{lemma8} will be discussed later.  It seems worthwhile to make a digression now to explain
the need to descend to generation $2N_k$. Should we have proceeded as in the proof of Theorem \ref{teo6}
we would have descended up to generation $N_k$ only, which means taking only one term in the union in
the left hand side of \eqref{capunio}. Thus we would have obtained
\begin{equation*}
\begin{split}
\frac{\sup_{t > 0} t  \Capa (\{ z  \in B(a,r) :  \frac{R(z)}{|z- a|} > t \}) } {\Capa(B(a,r))}
&\ge\log \left(\frac{1}{r}  \right)\sup_{t > 0}  \frac{t}{t 2k^2 4^{N_k}\sigma_{N_k}+\log 1/ \sqrt{2} \,\sigma_{N_k+k}}\\*[7pt]
&\ge c \log \left(\frac{1}{\sigma_{N_k}}\right) \frac{1}{k^2 4^{N_k} \sigma_{N_k}} \ge \frac{c}{k^2} \frac{M(\sigma_{N_k})}{\log 1/\sigma_{N_k}},
\end{split}
\end{equation*}
which does not conclude.

We proceed to complete the proof using Lemma \ref{lemma8}. We have
\begin{equation*}\label{capgt}
\begin{split}
\sup_{t > 0} t  \Capa (\{ z & \in B(a,r) :  \frac{R(z)}{|z- a|} > t \})  \\
&  \ge c \,\sup_{t > T_k}  \sum_{l=1}^{N_k} \sum_{p_j^k \in
Q^{N_k+l}} \frac{t}{t\, 2\, k^2\, 4^{2N_k} \,\sigma_{N_k+l-1} +
\log\frac{1}{\sqrt{2}\, \sigma_{2 N_k+k}}}   \\ &  \ge c \,\sup_{t >
T_k} \sum_{l=1}^{N_k} \frac{t \, 4^{N_k-l}}{t\, 2\, k^2\, 4^{2N_k}
\,\sigma_{N_k+l-1} + \log\frac{1}{\sqrt{2}\, \sigma_{2 N_k+k}}}
\\& = \frac{c}{k^2} \, \sum_{l=1}^{N_k} \frac{ 4^{N_k-l}}{ 4^{2N_k}
\,\sigma_{N_k+l-1}} \\& = \frac{c}{k^2} \, \sum_{l=1}^{N_k}
\frac{1}{ 4^{N_k+l-1} \,\sigma_{N_k+l-1}} \\ & = \frac{c}{k^2} \,
\sum_{l=1}^{N_k} \frac{M(\sigma_{N_k+l-1})}{ \log^2
\left(\frac{1}{\sigma_{N_k+l-1}}\right)} \\ & \ge \frac{c}{k^2} \,
\inf_{N \ge N_k} M(\sigma_N) \sum_{l=1}^{N_k} \frac{1}{ \log^2
\left(\frac{1}{\sigma_{N_k+l-1}}\right)}
\end{split}
\end{equation*}
and so, recalling that $r=r_k= \sqrt{2} \sigma_{N_k}$,
\begin{equation}\label{capgt1}
\begin{split}
&\frac{\sup_{t > 0} t  \operatorname{Cap} (\{ z 
\in B(a,r) :
\frac{R(z)}{|z-a|}>t \})}
{\operatorname{Cap}(B(a,r))} \\
&\hspace*{2.50cm} \ge \frac{c}{k^2} \, \inf_{N \ge N_k}M(\sigma_N)\, \log
\left(\frac{1}{\sigma_{N_k}}\right)
\,\sum_{l=1}^{N_k}
\frac{1}{\log^2\left(\frac{1}{\sigma_{N_k+l-1}}\right)}.
\end{split}
\end{equation}
At this point it is convenient to distinguish two cases. The first
is that
\begin{equation}\label{cas1}
\lim_{n \rightarrow \infty}\frac{\Psi(\sigma_n)}{\sigma_n} =0.
\end{equation}
Let us check that then, for some positive integer $n_0$,
\begin{equation}\label{log}
n \,\log 2  \le \log \frac{1}{\sigma_n} \le  n \,\log 4, \quad n \ge
n_0.
\end{equation}
The first inequality follows from the definition of Cantor sets which
gives $\sigma_n < 2^{-n}$ for all~$n.$ The second follows from
\eqref{cas1}, which yields $ 4^n \sigma_n \ge 1, \; n\ge n_0.$
Introducing \eqref{log} in~\eqref{capgt1} one gets
\begin{equation*}
\frac{\sup_{t > 0} t  \Capa (\{ z  \in B(a,r) :  \frac{R(z)}{|z- a|}
> t \})}{\Capa(B(a,r))} \ge \frac{c}{k^2} \, \inf_{N \ge N_k}
M(\sigma_N),
\end{equation*}
and now it only remains to choose $N_k$ large enough so that
\begin{equation*}\label{capinfinit}
 \inf_{N \ge N_k} M(\sigma_N) \ge k^3.
\end{equation*}

If \eqref{cas1} is not satisfied then for some $\delta >0$ and for infinitely many indexes
$n$ one has $\Psi(\sigma_n)/\sigma_n \ge \delta > 0.$ Given $x$ in
the Cantor set $K$ let $Q^n$ the square of generation $n$ containing
$x.$ Then for the measure $\mu$ associated with $K$ we have
\begin{equation*}\label{densitatpos}
\frac{\mu(B(x, \sqrt{2}\,\sigma_n))}{\sqrt{2}\,\sigma_n} \ge
\frac{\mu(Q^n)}{\sqrt{2}\,\sigma_n} = \frac{1}{\sqrt{2}}
\frac{\Psi(\sigma_n)}{\sigma_n} \ge \frac{\delta}{\sqrt{2}},
\end{equation*}
which says that $\mu$ has no vanishing linear density at any point
of $K.$ Thus the logarithmic potential of $\mu$ is not
differentiable in the capacity sense at any point of $K$ and we are
done in this case without resorting to any complicated measure like
$\nu$.
\end{proof}

We turn now to the discussion of inequality \eqref{capunio}.

\begin{lemma}\label{lemma8}
Let $B_j = B(p_j, r_j), \; 1 \le j\le N,$ a family of disjoint discs
of center $p_j$ and radius $r_j < 1.$  Let $\delta = \min\limits_{j \ne k}
\operatorname{dist}(B_j, B_k)$ and assume that $0 < \delta < 1.$ Set
$\sigma = \max\limits_{j} r_j.$  If $\sigma \le \delta^N$, then
\begin{equation}\label{capunio2}
\Capa \left(\bigcup_{j=1}^{N} B_j\right) \ge
\frac{1}{2}\,\sum_{j=1}^{N} \Capa (B_j).
\end{equation}
\end{lemma}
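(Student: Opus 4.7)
The plan is to construct a single positive measure $\mu$ supported on $\bigcup_{j=1}^N B_j$ of total mass $\sum_j \Capa(B_j)$ whose logarithmic potential satisfies $P\mu \le 2$ everywhere on $\mathbb{R}^2$. Applying the definition of Wiener capacity (Subsection~2.1) to $\mu/2$ will then give \eqref{capunio2} at once.

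For each $j$ I would take $\mu_j$ to be the equilibrium measure of $B_j$ in the Wiener normalization; since $B_j$ is a disc this is explicit, namely $\mu_j$ is a constant multiple of the arc-length on $\partial B_j$ with $\|\mu_j\|=1/\log(1/r_j)=\Capa(B_j)$, $P\mu_j\equiv 1$ on $\overline{B_j}$ and $P\mu_j\le 1$ everywhere on $\mathbb{R}^2$. Set $\mu=\sum_{j=1}^N\mu_j$. For each $k$ and each $z\in\overline{B_k}$, split $P\mu(z)=P\mu_k(z)+\sum_{j\ne k}P\mu_j(z)$; the first summand equals $1$, and for $j\ne k$ every $w\in\partial B_j$ satisfies $|z-w|\ge\delta$, so
\begin{equation*}
P\mu_j(z)\le \|\mu_j\|\,\log(1/\delta)=\frac{\log(1/\delta)}{\log(1/r_j)}\le \frac{\log(1/\delta)}{\log(1/\sigma)}\le \frac{1}{N},
\end{equation*}
where the last inequality is exactly the hypothesis $\sigma\le \delta^N$. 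Summing over $j\ne k$ yields $\sum_{j\ne k}P\mu_j(z)\le (N-1)/N<1$, and hence $P\mu(z)<2$ on $\bigcup_k \overline{B_k}$.

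To promote this to a global bound, note that $\mu$ is supported on $\bigcup_j \partial B_j$, so $P\mu$ is harmonic and continuous on the open set $\Omega:=\mathbb{R}^2\setminus\bigcup_j\overline{B_j}$, and $P\mu(z)=-\|\mu\|\log|z|+O(1)$ as $|z|\to\infty$, in particular tending to $-\infty$. The classical maximum principle applied to the bounded components of $\Omega$ and to the unbounded one (with its controlled behaviour at infinity) forces $P\mu\le \sup_{\partial\Omega}P\mu\le 2$ throughout $\Omega$, and combined with the bound on $\bigcup_k \overline{B_k}$ this gives $P\mu\le 2$ on all of $\mathbb{R}^2$. The main technical point is this maximum principle step, but the decay at infinity makes it entirely standard. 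The hypothesis $\sigma\le \delta^N$ is used exactly once and in a tight way: it is precisely what forces the combined contribution of the other $N-1$ discs on any single $\overline{B_k}$ to be strictly less than $1$, and hence the uniform bound of $2$ for $P\mu$.
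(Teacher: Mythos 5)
Your proposal is correct and is essentially the paper's own proof: the paper also sums the normalized equilibrium potentials $u_j$ of the discs (each a multiple of arc-length of mass $\Capa(B_j)=1/\log(1/r_j)$), bounds the off-diagonal contributions by $\log(1/\delta)/\log(1/\sigma)\le 1/N$ using $\sigma\le\delta^N$, and concludes $\sum_j u_j\le 2$, hence \eqref{capunio2} from the definition of Wiener capacity. Your explicit maximum-principle step to pass from $\bigcup_k\overline{B_k}$ to all of $\C$ is a welcome clarification of a point the paper leaves implicit.
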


To apply Lemma \ref{lemma8} to \eqref{capunio} note that the radius
of the disc $B_{lj}$ is
$$
\sqrt{2}\, \sigma_{2 N_k+k} \; e^{-t\, 2\, k^2\, 4^{2N_k}
\,\sigma_{N_k+l-1}} \le  e^{-t\, 4^{2N_k} \,\sigma_{2 N_k}}
$$
and the distance between two such discs is larger than
$\sigma_{2N_k-1}-2 \sigma_{2 \,N_k}>0.$ For any fix $k$ the number
of discs $B_{lj}$ is less than $4^{N_k}.$ Hence the hypothesis of
Lemma~\ref{lemma8} are satisfied if
$$
t \ge T_k := \frac{4^{N_k}}{4^{2N_k}\,\sigma_{2N_k}} \log
\frac{1}{\sigma_{2N_k-1}-2 \sigma_{2 \,N_k}},
$$
which is the large number $T_k$ used in the proof of Theorem \ref{teo7}.
\begin{proof}[Proof of Lemma \ref{lemma8}]
The normalized equilibrium potential of the disc $B_j=B(p_j,r_j)$ is
\begin{equation*}\label{eqpot}
 u_j = \frac{1}{\log \frac{1}{r_j}}\,\log \frac{1}{|z|} * \frac{d\sigma_j}{2\pi r_j},
\end{equation*}
where $\sigma_j$ stands for the arc-length measure on $\partial B_j$. Then
\begin{equation*}\label{eqpot2}
u_j(z)=
\begin{cases}
\frac{1}{\log \frac{1}{r_j}}\,\log \frac{1}{|z-p_j|} &\text{if}\quad |z-p_j| \ge r_j,\\*[11pt]
 \hspace{1cm}1  & \text{if}\quad |z-p_j| \le r_j.
\end{cases}
\end{equation*}
If $z \in B_k, \, k \neq j$ then
\begin{equation*}\label{ujota}
 u_j(z) \le \frac{\log \frac{1}{\delta}}{\log \frac{1}{\sigma}},
\end{equation*}
and so
\begin{equation*}\label{ujota}
 \sum_{j=1}^N u_j(z) \le 1+ (N-1)\frac{\log \frac{1}{\delta}}{\log \frac{1}{\sigma}} \le 1+\frac{N-1}{N} \le 2,
 \quad z \in \C,
\end{equation*}
which yields \eqref{capunio2} by definition of Wiener capacity \eqref{newtoniancap}.
\end{proof}

\section{Second order differentiability}
\begin{proof}[Proof of Theorem \ref{teo5}, part (i)]  Assume that $d \ge 3$. Then the first order derivatives of $1/|x|^{d-2}$
in the distributions sense are
the locally integrable functions
\begin{equation*}\label{prider}
 \partial_i\, \frac{1}{|x|^{d-2}} = -(d-2) \frac{x_i}{|x|^{d-2}}, \quad 1 \le i \le d.
\end{equation*}
The second order derivatives in the distributions sense are given by principal value distributions and the Dirac delta $\delta_0$
at the origin via the identities
\begin{align*}
 \partial_{ij} \, \frac{1}{|x|^{d-2}} &= d(d-2) \operatorname{p.v.} \frac{x_i x_j}{|x|^{d+2}},
 \quad i \neq j, \\*[7pt]
  \partial_{ii} \, \frac{1}{|x|^{d-2}} &= -(d-2) \operatorname{p.v.} \frac{|x|^2- d \,x_i^2}{|x|^{d+2}}
+a_d \,\delta_0,
\end{align*}
where $a_d = -(d-2) \omega_{d-1} /d$ and $\omega_{d-1}$ is the
$(d-1)$-dimensional surface measure of the unit sphere in $\Rd.$

Assume that $\varphi$ is a $C^{\infty}$ function with compact support. Then $u = 1/|x|^{d-2}*\varphi$ is a
$C^{\infty}$ function on $\Rd$ and its second order partial derivatives are
\begin{alignat*}{3}\label{segderfi}
& \partial_{ij} \,u(x) = d(d-2) \left(\operatorname{p.v.} \frac{x_i x_j}{|x|^{d+2}} * \varphi \right) (x), &\quad &i \neq j, &\quad &x \in \Rd, \\*[7pt]
&  \partial_{ii} \,u(x) =  -(d-2) \left(\operatorname{p.v.} \frac{|x|^2- d \,x_i^2}{|x|^{d+2}} *  \varphi \right)
(x)+a_d \,\varphi (x), &\quad &1 \le i \le d, &\quad &x \in \Rd.
\end{alignat*}
In particular, the principal value integrals exist at each point $x \in \Rd$.

Given a finite Borel measure $\mu$ in $\Rd$, there is a way of defining first and second derivatives
of the potential $u= 1/|x|^{d-2}*\mu$ at a fixed point $a \in \Rd$. For the first order derivatives we only have to
require that $a$ is a Lebesgue point of the locally integrable functions
\begin{equation*}\label{firstdermu}
\partial_i u = -(d-2) \frac{x_i}{|x|^{d-2}}* \mu , \quad 1 \le i \le d.
\end{equation*}
For the second order derivatives
\begin{equation}\label{segdermu}
\begin{aligned}
 \partial_{ij} \,u &= d(d-2) \left(\operatorname{p.v.} \frac{x_i x_j}{|x|^{d+2}} * \mu \right),
& \quad &i \neq j, \quad x \in \Rd, \\*[7pt]
  \partial_{ii} \,u &= -(d-2) \left(\operatorname{p.v.} \frac{|x|^2- d \,x_i^2}{|x|^{d+2}} *  \mu \right)
+a_d \,\mu , &\quad &1 \le i \le d,
\end{aligned}
\end{equation}
it is natural to require existence at the point $a$ of all the above principal value integrals and of
the limit
\begin{equation}\label{mutilda}
\tilde{\mu} (a): = \lim_{r \rightarrow 0} \frac{\mu(B(a,r))}{r^{d}}.
\end{equation}
 We know, by Lebesgue differentiation theorem and by standard Calder\'{o}n--Zygmund theory,
that the stated conditions are satisfied for almost all points $a$ with respect to $d$ dimensional
Lebesgue measure $dx$. Then the prospective second order Taylor polynomial of $u$ at $a$
\begin{equation}\label{secondtaylor}
u(a)+ \sum_{i=1}^d \partial_i u(a) (x_i-a_i)+ \frac{1}{2} \, \sum_{i,j=1}^d \partial_{ij}u(a)(x_i-a_i)(x_j-a_j)
\end{equation}
is defined at almost all points.

Now we make a convenient reduction. To study differentiability properties of $u$
at a fixed point $a$ it is enough to replace $\mu$ by $\chi_B \mu$, with $B=B(a,1)$, because the potentials
of $\mu$ and $\chi_B \mu$ differ by a smooth function on $B$. Let $\varphi \in C^{\infty}$ be a function
with compact support in the ball $B(a,2)$ taking the value $1$ on $B$. Then by the Radon--Nikodym decomposition
there is a function $f$ in $L^1(B)$ such that
\begin{equation*}\label{lebu}
\mu = (f-f(a)) \varphi\, dx + \mu_s + f(a) \varphi \,dx,
\end{equation*}
where $\mu_s$ is the singular part of $\mu$. Since the potential of $\varphi dx$ is smooth on $\Rd$,
we can assume that $\mu$ is a positive measure which satisfies
\begin{equation*}\label{zerodensityleb}
\tilde{\mu} (a): = \lim_{r \rightarrow 0} \frac{\mu(B(a,r))}{r^{d}}=0.
\end{equation*}
One of the effects of this assumption is that in the definition of the second order
derivatives~$\partial_{ii} \,u $ at the point $a$ one can avoid the second term in \eqref{segdermu},
which would be the limit~\eqref{mutilda}.

We have to show \eqref{defdifcap2} where $D(x)$ is as in \eqref{Q2} with the second order
Taylor polynomial  as in \eqref{secondtaylor}. The structure of the proof is very similar to that of the
sufficiency part in Theorem \ref{teo1}, so we only outline the argument. Take $a=0$ for simplicity.
First we replace the principal
value integrals by truncations at level $\varepsilon$, where $\varepsilon = 2|x|$. The difference is a term which
tends to $0$ with $\varepsilon.$  We split the domain of integration of the integral into two pieces,
one corresponding to $|y|> \varepsilon$. In that piece one estimates the remainder of the Taylor expansion up to
order $2$ in terms of third derivatives. The upper bound one gets is
\begin{equation*}\label{mgep}
C\,\varepsilon\, \int_{|y|>\varepsilon} \frac{ d\mu(y)}{|y|^{d+1}}.
\end{equation*}
This term is estimated by integration by parts introducing a parameter $N$ as is \eqref{Bmp}. It remains
to estimate the integral over $|y| < \varepsilon$ with respect to $\mu$ of
\begin{equation*}\label{fop}
 \frac{1}{|x|^2} \left| \frac{1}{|x-y|^{d-2}} -\frac{1}{|y|^{d-2}} -(d-2)\left\langle
\frac{y}{|y|^d},x\right\rangle\right|,
\end{equation*}
which is not greater than a constant times the sum of the $3$ terms
\begin{equation*}\label{trestermes}
 \frac{1}{\varepsilon^2} \frac{1}{|x-y|^{d-2}} +\frac{1}{\varepsilon^2}  \frac{1}{|y|^{d-2}}+  \frac{1}{\varepsilon}
\frac{1}{|y|^{d-1}}.
\end{equation*}
The integral over $|y| < \varepsilon$ with respect to $d\mu$ of the second and third terms above is less
than or equal to
\begin{equation*}\label{segonitercer}
 \frac{1}{\varepsilon}  \int_{|y|<\varepsilon} \frac{1}{|y|^{d-1}}\,d\mu(y),
\end{equation*}
which is estimated by an integration by parts as in \eqref{Bmp}. The upper bound one gets is
\begin{equation*}\label{segonitercerbound}
 C \, \sup_{0 < \rho< \varepsilon} \frac{\mu(B(0,\rho))}{\rho^d} \rightarrow 0,
 \quad \text{as}\quad \varepsilon \rightarrow 0.
\end{equation*}
One is left with
\begin{equation*}\label{primer}
F(x) = \frac{1}{\varepsilon^2} \int_{|y|<\varepsilon}  \frac{1}{|x-y|^{d-2}} \, d\mu(y).
\end{equation*}
If $x \in B(0,r)$ then
\begin{equation*}\label{efamaj}
F(x) \le P\left(\chi_{B(0,2r)}(y) \frac{d\mu(y)}{|y|^2}\right) (x),
\end{equation*}
where $P$ is the Newtonian potential of the indicated measure, as in \eqref{pot}. The total mass of the measure
$\chi_{B(0,2r)}(y) \,d\mu(y)/ |y|^2$ is estimated
by an integration by parts and one gets the upper bound
\begin{equation*}\label{massamusobre2}
 \sup_{0 < \rho< r} \frac{\mu(B(0,\rho))}{\rho^d} \, r^{d-2}.
\end{equation*}
Therefore
\begin{equation*}\label{capF}
\sup_{t>0} \frac{t\operatorname{Cap}\left\{x\in B(0,r): F(x)>t\right\}}{\operatorname{Cap}(B(0,r))} \le C\,
\sup_{0 < \rho< r} \frac{\mu(B(0,\rho))}{\rho^d},
\end{equation*}
which completes the proof of Theorem \ref{teo5}, part (i).
\end{proof}

\begin{proof}[Proof of Theorem \ref{teo5}, part (ii)] The construction is practically that of
Calder\'{o}n in \cite{C}, so that we will briefly outline the argument. It is enough to construct a finite Borel
measure in the unit square $Q =[0,1] \times[0,1]$ whose logarithmic potential is not second order
differentiable in the weak capacity sense at almost all points of the square. If this measure has been
constructed, then one covers the plane
by disjoint dyadic squares~$Q_n$, $n=0,1, \dots$ of side  length $1$ and one sets $\mu = \sum_{n=0}^\infty 1/2^n \,\mu_n$, where $\mu_n$ is
the translation into $Q_n$  of the measure constructed in the unit square.

\enlargethispage{3mm}

Divide the unit square into $4^{n^2}$ disjoint squares of side length $2^{-n^2}$. The vertices of those squares
not lying in the boundary of the unit square are of the form $(i 2^{-n^2}, j 2^{-n^2})$ with $1 \le i, j \le
(2^{n^2}-1)$. There are $N_n : = (2^{n^2}-1)^2 \le 4^n$ such vertices. Denote them by $p_{nk}$, where the
index $k$ varies from $1$ to $N_n.$ Let $B_{nk}$ be the ball with center $p_{nk}$
and radius  $1/(n 2^{n^2})$. Set
\begin{equation*}\label{esubn}
 E_n = \bigcup_{k=1}^{N_n} B_{nk}, \quad\quad D_m = \bigcup_{n=m}^{\infty} E_n, \quad\quad
 D= \bigcap_{m=1}^\infty D_m,
\end{equation*}
so that $|E_n| \le N_n / (n^2 4^{n^2}) \le 1/n^2$. Hence $|D_m| \rightarrow 0$ as $m \rightarrow \infty$
and $|D|=0.$ Let $L$ be the function in \eqref{L}. Define
\begin{alignat*}{2}
 S_n(z) &= \frac{1}{N_n} \sum_{k=1}^{N_n}
L(n 2^{n^2} (z-p_{nk})), &\quad &z \in \C,\\*[7pt]
 u(z) &= \sum_{n=1}^\infty \frac{1}{n^{3/2}} \,S_n(z), &\quad &z \in \C.
\end{alignat*}
Then $u$ is the logarithmic potential of a finite Borel measure supported in the unit square.

We claim that $u$ is not second order differentiable in the weak capacity sense at any point of
$Q \setminus D$. Take $a \in Q\setminus D$,  so that $a \in Q \setminus D_m$ for some positive integer $m.$
Thus $a \in Q,$ $a \notin B_{nk}, \; n \ge m, \; 1 \le k \le N_n.$  We consider radii of the form $r=r_q =
1/2^{q^2}$. If $q$ is large enough then $r$ is small enough so that the ball $B(a,r)$ does not contain any~$p_{nk}$, $1 \le n \le m-1, \; 1 \le k \le N_n$. Then
\begin{equation*}\label{uminusgrad}
\begin{split}
 u(z)-&u(a)-  \langle \nabla u(a), z-a \rangle \\*[7pt]
 & = \sum_{n=1}^{m-1} \frac{1}{n^{3/2}} \left(S_n(z)-S_n(a)
 -\langle \nabla S_n(a), z-a \rangle \right)+ \sum_{n=m}^{\infty} \frac{1}{n^{3/2}} S_n(z)
\end{split}
\end{equation*}
and the first term in the right-hand side  is smooth on $B(a,r)$. Hence the differentiability
properties of $u$ are exactly those of
$$
R(z) : = \sum_{n=m}^{\infty} \frac{1}{n^{3/2}} S_n(z).
$$
 Assume that $R$ is second order differentiable in the weak capacity
sense at $a$. Then
\begin{equation}\label{restalipcap}
\frac{\sup_{t > 0} t \operatorname{Cap}(\{z \in B(a,r) : |R(z)|
> t \})}{r^2 \,\operatorname{Cap}(B(a,r))} \le C_a, \quad 0 < r < 1/4,
\end{equation}
for some constant $C_a$ depending only on $a.$ To disprove \eqref{restalipcap} we note that, since $a \in Q$,
there is  a point  $p_{qk} \in B(a,1/2^{q^2}).$  Moreover $|p_{qk} -a| < (1/\sqrt{2})r, \; r=1/2^{q^2}.$

If $q \ge m$, then
\begin{equation*}\label{erramg}
R(z) \ge \frac{1}{q^{3/2}} S_q(z) \ge \frac{1}{q^{3/2}
N_q}\,L(q 2^{q^2} (z-p_{qk})).
\end{equation*}
If $q$ is large enough the set $\{z \in B(a,r) : |R(z)|
> t \}$ contains the ball of center $p_{qk}$ and radius $1/(e^{t q^{3/2} 4^{q^2}} q 2^{q^2})$. Thus the
left-hand side of \eqref{restalipcap} is not less than a constant times
\begin{equation*}\label{caperramg}
q^2 4^{q^2} \sup\limits_{t>0}
\frac{t}{q^{3/2} 4^{q^2} t + \log q \,+ q^2 \log 2 }\\
 =  q^2 4^{q^2} \frac{1}{q^{3/2} 4^{q^2}} = q^{1/2},
\end{equation*}
which shows that \eqref{restalipcap} cannot hold.
\end{proof}

\section{The equilibrium measure}

For each compact $E$ subset of $\mathbb{R}^{d}$, $d\ge 2$, there exists a unique probability measure~$\mu$ supported on~$E$ of minimal energy. In other words, the infimum in~\eqref{capenergy} is attained by~$\mu$. This probability measure is called the equilibrium measure and it can be shown that its potential (the equilibrium potential) is constant on~$E$ except for a set of zero Newtonian capacity (Wiener capacity for $d=2$). In this section we present a proof of the following result, due to Oksendal in the plane; see ~\cite[Corollary 1.5]{O1} and \cite{O2}. An  alternative proof which works in higher dimensions for harmonic measure is in \cite[Theorem 10] {GS}.

\begin{theorem}\label{teo8}
The equilibrium measure of a compact subset of~$\mathbb{R}^{d}$ is singular with respect to $d$-dimensional Lebesgue measure.
\end{theorem}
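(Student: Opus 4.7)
The plan is to argue by contradiction. Suppose the equilibrium measure $\mu$ of the compact set $E \subset \mathbb{R}^d$ has a nontrivial absolutely continuous part $f\,dx$, and set $G = \{f > 0\}$. Since $\mu$ is supported on $E$ we have $f = 0$ a.e.\ on $E^{c}$, so at Lebesgue-almost every $a \in G$ the following hold simultaneously: $f(a) > 0$ and hence $\tilde{\mu}(a) > 0$; $a$ is a Lebesgue density point of $E$; and, by the equilibrium property, $u(a) = c$ and $u(x) = c$ for all $x \in E \setminus N$ with $\Capa(N) = 0$.

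For $d \geq 3$, Theorem~\ref{teo5}(i) guarantees that at Lebesgue-almost every $a$ there exist coefficients $(A_{i})$ and $(B_{jk})$ realising the second-order Peano expansion of Definition~\ref{capdif2} in the capacity sense. On $E \cap B(a,r) \setminus N$ we have $u(x) - u(a) = 0$, so the Peano remainder $D(x)\,|x-a|^{2}$ equals $-\sum_{i} A_{i}(x-a)_{i} - \sum_{j,k} B_{jk}(x-a)_{j}(x-a)_{k}$. The Lebesgue density-$1$ property at $a$ together with the Frostman-type inequality $\Capa(F) \gtrsim (|F|/r^{d})^{(d-2)/d}\,r^{d-2}$ shows that $E \cap B(a,r) \setminus N$ carries Newtonian capacity comparable to $\Capa(B(a,r))$. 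Splitting $B(a,r)$ into thin directional cones and exploiting the scale mismatch $|\langle (A_{i}), x-a \rangle|/|x-a|^{2} \sim |(A_{i})|/r$, one concludes that if $(A_{i}) \ne 0$ the normalized weak-capacity quantity of Definition~\ref{capdif2} stays bounded away from zero as $r \to 0$, forcing $(A_{i}) = 0$; the analogous argument in a direction $v_{0}$ for which $\sum B_{jk} v_{0,j} v_{0,k} \ne 0$ then forces $(B_{jk}) = 0$. However, summing the identities \eqref{segdermu} over $i$ and using the symmetry $\sum_{i}(|x|^{2} - d\,x_{i}^{2}) \equiv 0$ to annihilate the principal-value parts, one obtains $2 \sum_{i} B_{ii} = \sum_{i} \partial_{ii} u(a) = -(d-2)\,\omega_{d-1}\,\tilde{\mu}(a) \ne 0$, the required contradiction.

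For $d = 2$, Theorem~\ref{teo4} gives differentiability of $u$ in the capacity sense at a.e.\ $a \in G$, with first-order coefficient vector $(A_{i})$. The analogous directional-cone argument, now using $\Capa(B(a,r)) \simeq 1/\log(1/r)$ and the planar Frostman-type bound, still forces $(A_{i}) = 0$, so $u(x) = c + o(|x-a|)$ in the capacity sense near $a$. One then mimics the necessary-condition argument of Section~3.2 in the plane: the Poisson--Green identity
\begin{equation*}
u(a) = \frac{1}{2\pi r} \int_{\partial B(a,r)} u \, d\sigma + \int_{B(a,r)} \log\frac{r}{|x-a|} \, d\mu(x),
\end{equation*}
together with the planar capacity-to-surface-measure bound $c\,\sigma(E') \le \exp(-1/\Capa(E'))$ recalled in the proof of Lemma~\ref{lemma3}, converts the capacity-sense smallness of $u - c$ on $\partial B(a,r)$ into the estimate $|u(a) - (2\pi r)^{-1} \int_{\partial B(a,r)} u\,d\sigma| = o(r^{2})$. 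Since the Green integral is bounded below by $(\log 2)\,\mu(B(a, r/2))$, this yields $\mu(B(a, r/2)) = o(r^{2})$, incompatible with $\tilde{\mu}(a) > 0$.

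The main obstacle in both cases is the passage from the weak capacitary smallness of the Peano remainder to quantitative, direction-specific lower bounds on a high-capacity subset of $B(a,r) \cap E$: one must reconcile the $(d-2)$-homogeneity (respectively logarithmic behaviour) of capacity with the $d$-homogeneity of Lebesgue measure via a Frostman-type inequality, and absorb the capacity-zero exceptional set $N$ into the density-$1$ estimate. In the plane the first-order conclusion $(A_{i}) = 0$ must be upgraded to a second-order contradiction through the Poisson--Green identity, where the delicate capacity-to-$\sigma$-measure comparison on the circle is the key technical ingredient.
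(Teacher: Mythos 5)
Your treatment of $d\ge 3$ is essentially the paper's own argument: at a point that is simultaneously a Lebesgue density point of $E$, a point where the equilibrium potential equals its constant value, and a point where Theorem~\ref{teo5}(i) applies, the cone argument combined with the comparison $\operatorname{Cap}(F)^{1/(d-2)}\ge c_d\,|F|^{1/d}$ forces first $A=0$ and then $B=0$; summing the identities \eqref{segdermu} over $i$ (the principal values cancel since $\sum_i(|x|^2-d\,x_i^2)\equiv 0$) then gives $f(a)=0$. The paper packages the $A=0$ step as an approximate-tangent-plane lemma, but the substance is the same, and this half of your proposal is correct.

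The planar case, however, contains a genuine gap. Theorem~\ref{teo4} provides only \emph{first order} differentiability in the capacity sense, so what you can feed into the Poisson--Green identity is that $u(x)-u(a)$ (with $A=0$) is $o(|x-a|)$ in the normalized weak capacity norm. Running the argument of Section~3.2 / Lemma~\ref{lemma3} with the bound $c\,\sigma(E')\le\exp\left(-1/\operatorname{Cap}(E')\right)$ and optimizing the split of $\int_0^\infty\sigma\{x\in\partial B(a,r): |u(x)-u(a)|>t\}\,dt$ yields
\begin{equation*}
\Bigl|u(a)-\frac{1}{2\pi r}\int_{\partial B(a,r)}u\,d\sigma\Bigr|=o(r),
\end{equation*}
not $o(r^2)$; this is exactly the linear-density conclusion $\mu(B(a,r/2))=o(r)$ of Lemma~\ref{lemma3}, which is perfectly compatible with $\tilde\mu(a)>0$, since a measure with bounded positive density satisfies $\mu(B(a,r))\simeq r^2=o(r)$. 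Even the most favourable accounting --- $u=c$ on $E\cap\partial B(a,r)$ off a capacity-null set, $\sigma(\partial B(a,r)\setminus E)=o(r)$ for well-chosen radii at a density point, and $|u-u(a)|=o(r)$ on the leftover arc --- produces a boundary term of size $\frac{1}{2\pi r}\cdot o(r)\cdot o(r)=o(r)$. To reach $o(r^2)$ you would need second order differentiability in the (weak) capacity sense, which is precisely what fails in the plane by Theorem~\ref{teo5}(ii). The paper closes the planar case by a different route: having shown $\nabla u(a)=0$ at a.e.\ $a\in E$, it identifies $\nabla u$ with the Cauchy transform $\frac1z*\mu$ and invokes the theorem of Tolsa--Verdera \cite{TV} that a finite measure whose Cauchy transform vanishes a.e.\ on its support has vanishing absolutely continuous part. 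Some such nontrivial input is required; your Poisson--Green step does not replace it.
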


\begin{proof}[Proof in $\mathbb{R}^{d}$, $d\ge 3$]
We plan to apply Theorem~\ref{teo5}.

Set $u=\frac{1}{|x|^{d-2}} *\mu$. By Theorem~\ref{teo5} we have \eqref{defdifcap2} at almost all points $a\in\mathbb{R}^{d}$. Set $\nabla u(a)=(A_{1},\dotsc,A_{d})$ and let $B$ stand for the symmetric $d\times d$~matrix with entries~$B_{ij}$. Here the $A_{i}$ and the $B_{ij}$ are as in \eqref{Q2}. Set $\mu =f\,dx+\mu_{s}$, with $f\in L^{1}(dx)$ and $\mu_{s}$ singular with respect to~$dx$. Thus, by \eqref{segdermu},
\begin{equation}\label{eq54}
\sum_{i=1}^{d}B_{ii}=d\,a_{d}f(a).
\end{equation}

\begin{lemma}
The set of points~$a \in E$ where $u$ is second order differentiable in the capacity sense and $\nabla u(a)\ne 0$ is a countable union of sets of finite $(d-1)$-dimensional Hausdorff measure.
\end{lemma}

\begin{proof}
Since the equilibrium potential~$u$ is constant $\operatorname{Cap}$-a.e.\ on~$E$ we have, by Theorem~\ref{teo5},
\begin{equation}\label{eq55}
\lim_{r\to 0}\frac{1}{\operatorname{Cap}B(a,r)} \sup_{t>0}t\operatorname{Cap} \left\{x\in B(a,r)\cap E:\frac{|\langle \nabla u(a),x-a\rangle +\langle B(x-a),x-a\rangle|}{|x-a|^{2}} > t \right\} =0 ,
\end{equation}
for almost all points $a \in E$.
Assume that $a=0$, $\nabla u(0)\ne 0$ and, without loss of generality, that $\nabla u(0)=\lambda (0,\dotsc,0,1)$, with $\lambda >0$. Given $\delta>0$ consider the cone
\begin{equation}\label{eq56}
K_{\delta}=\left\{ x\in\mathbb{R}^{d}\backslash \{0\}:\lambda \frac{|x_{d}|}{|x|}>\delta\right\}.
\end{equation}
If $x\in B(0,r)\cap E\cap K_{\delta}$ and $r$ is small enough we have, for some positive constant $C$,
$$
\frac{|\langle \nabla u(0),x\rangle +\langle Bx,x\rangle|}{|x|^{2}} \ge \lambda \frac{|x_{d}|}{|x|^{2}}-C\ge \frac{\delta}{|x|}-C \ge \frac{\delta / 2}{|x|}.
$$
Taking $t=1$ and $r<\delta / 2$ we get
$$
\lim_{r\to 0}\frac{\operatorname{Cap}(B(0,r)\cap E\cap K_{\delta})}{r^{d-2}}=0.
$$
Since one has the general inequality $\operatorname{Cap}(F)^{\frac{1}{d-2}}\ge c_{d}\, H_{\infty}^{d-1} (F)^{\frac{1}{d-1}}$ relating capacity and $(d-1)$-dimensional Hausdorff content of compact sets~$F$ (\cite[Corollary 5.1.14]{AH}), we conclude that
$$
\lim_{r\to 0}\frac{H_{\infty}^{d-1} (B(0,r)\cap E\cap K_{\delta})}{r^{d-1}}=0,
$$
which means that the hyperplane~$x_{d}=0$ is an approximate tangent hyperplane to~$E$ at~$0$. The set of points of~$E$ where there exists such a tangent hyperplane is a countable union of sets with finite $(d-1)$-dimensional Hausdorff measure (\cite[p. 214, 15.22]{M1}).
\end{proof}

To continue the proof recall that
$$
\operatorname{Cap}(F)^{\frac{1}{d-2}} \ge c_{d} |F|^{\frac{1}{d}},
$$
where $|F|$ denotes the $d$-dimensional Lebesgue measure of the compact set~$F$. Therefore \eqref{eq55} yields, at almost all points~$a \in E$ and for all $t>0$,
$$
\lim_{r\to 0}\frac{1}{|B(a,r)|} \left|\left\{x\in B(a,r)\cap E:\frac{|\langle B(x-a),x-a \rangle|}{|x-a|^{2}}>t\right\}\right| =0.
$$
Set $a=0$ and
$$
U=U_{t}=\left\{x\in\mathbb{R}^{d}\backslash \{0\}:\frac{|\langle Bx,x\rangle |}{|x|^{2}}>t\right\}.
$$
Then
\begin{equation*}
\begin{split}
\frac{|B(0,1)\cap U|}{|B(0,1)|}&=\frac{|B(0,r)\cap U|}{|B(0,r)|}=\frac{|B(0,r)\cap U\cap E|}{|B(0,r)|} +\frac{|B(0,r)\cap U\cap E^{c}|}{|B(0,r)|}\\*[7pt]
&\le \frac{|B(0,r)\cap U\cap E|}{|B(0,r)|}+\frac{|B(0,r)\cap E^{c}|}{|B(0,r)|}.
\end{split}
\end{equation*}
If $0$ is a point of density of~$E$ we obtain, letting $r \rightarrow 0$, that $|U|=0$, which means, $U$ being an open set, that $U=U_{t}=\emptyset$ for all~$t$. In other words, $B\equiv 0$ and thus, appealing to~\eqref{eq54}, $f(a)=0$, for almost all $a \in E$.
\end{proof}

\begin{proof}[Proof in $\mathbb{R}^{2}$]
We plan to apply Theorem~\ref{teo4}.  Recall that in the plane when dealing with capacity we tacitly assume that all our sets are contained in the disc centered at the origin and of radius $1/2$. Since the equilibrium potential is constant $\operatorname{Cap}$-a.e. on~$E$  we have for some real numbers~$A_{1}$ and~$A_{2}$
$$
\lim_{r\to 0}\frac{1}{\operatorname{Cap}B(a,r) } \sup\limits_{t>0} t \operatorname{Cap}\biggl(\biggl\{x\in B(a,r)\cap E: \frac{\left|\sum\limits_{i=1}^{2} A_{i}(x_{i}-a_{i})\right|}{|x-a|}>t\biggr\}\biggr) =0
$$
at $H^{\psi}$-almost all points $a\in E$, hence at almost all points $a\in E$ with respect to area. Set $\nabla u(a)=(A_{1},A_{2})$. By the well-known inequality \cite[Corollary 5.1.14]{AH}
$$
\operatorname{Cap}(F)\ge C\, \frac{1}{\log (1 / \operatorname{H^1_\infty}(F))},
$$
valid for a constant $C$~independent of the compact set~$F$, we get, for each $t > 0$,
$$
\lim_{r\to 0}\frac{1}{r}  H^{1}_{\infty}\left(\left\{ x\in B(a,r)\cap E: \frac{|\langle \nabla u(a),x-a\rangle|}{|x-a|}>t\right\}\right)=0.
$$
Assume that $\nabla u(a)\ne 0$, set $a=0$ and, without loss of generality, $\nabla u(0)=\lambda (0,1)$, $\lambda>0$. Then we obtain, with $\delta =t$,
$$
\lim_{r\to 0}\frac{H^{1}_{\infty}(B(a,r)\cap E\cap K_{\delta})}{r}=0,
$$
where $K_{\delta}$ is the cone~\eqref{eq56}. Hence the line~$x_{2}=0$ is an approximate tangent line for~$E$ at~$0$. Therefore the set of points in~$E$ where $\nabla u(a)$ is non-zero is a countable union of sets of finite length. In particular $\nabla u(a)=0$, for almost all $a \in E$ and
$$
C\mu(a)=\left(\frac{1}{z}*\mu\right)(a)=0,\quad \text{a.e.\ on }E.
$$
We can now resort to the proof of Theorem~1 in \cite{TV} to conclude that the absolutely continuous part of~$\mu$ vanishes. Indeed in \cite{TV} one takes $\mu$ absolutely continuous with respect to~$dx$, but a minor variation of the argument applies to our situation.
\end{proof}

\begin{gracies}
We are grateful to S. Gardiner for some useful correspondence. The first named author was partially supported by the grant 2014SGR289
(Ge\-ne\-ra\-li\-tat de Catalunya).
The second named author was partially supported by the grants 2014SGR75
(Generalitat de Catalunya) and  MTM2013--44699 (Mi\-nis\-terio de
Educaci\'{o}n y Ciencia).
\end{gracies}

\vspace*{.55cm}

\begin{tabular}{l}
Juli\`{a} Cuf\'{\i} and Joan Verdera\\
Departament de Matem\`{a}tiques\\
Universitat Aut\`{o}noma de Barcelona\\
08193 Bellaterra, Barcelona, Catalonia\\
{\it E-mail:} {\tt jcufi@mat.uab.cat}\\
{\it E-mail:} {\tt jvm@mat.uab.cat}
\end{tabular}

\end{document}